\documentclass[a4paper,reqno, 11pt]{amsart}

\usepackage[utf8x]{inputenc}  

\usepackage{fullpage}
\usepackage{srcltx} \usepackage{pdfsync}
\usepackage{calc,amsfonts,amsthm,amscd,epsfig,psfrag,amsmath,amssymb,enumerate,graphicx,mathrsfs}
\usepackage{wrapfig} 
\usepackage{caption}
\setlength{\oddsidemargin}{5mm}
\setlength{\evensidemargin}{5mm}
\setlength{\textwidth}{160mm}
\setlength{\headheight}{0mm}
\setlength{\headsep}{12mm}
\setlength{\topmargin}{0mm}
\setlength{\textheight}{220mm}
\setcounter{secnumdepth}{2}

\setcounter{tocdepth}{1}

\usepackage[usenames]{color}\usepackage{graphicx}

\usepackage[showlabels,sections,floats,textmath,displaymath]{}


\numberwithin{equation}{section}

\DeclareMathSymbol{\leqslant}{\mathalpha}{AMSa}{"36} 
\DeclareMathSymbol{\geqslant}{\mathalpha}{AMSa}{"3E} 
\DeclareMathSymbol{\eset}{\mathalpha}{AMSb}{"3F}     
\renewcommand{\leq}{\;\leqslant\;}                   
\renewcommand{\geq}{\;\geqslant\;}                   

\newcommand{\abs}[1]{ \lvert #1 \rvert}
\newcommand{\norm}[1]{ \lVert #1 \rVert}
\newcommand{\td}[0]{\mathrm{d}}

\theoremstyle{plain}{ 
\newtheorem{theorem}{Theorem}[section]

\newtheorem{lemma}[theorem]{Lemma}
\newtheorem{proposition}[theorem]{Proposition}
\newtheorem{corollary}[theorem]{Corollary}

\newtheorem{definition}[theorem]{Definition}
}

\theoremstyle{definition}{
\newtheorem{remark}[theorem]{Remark}

}

\theoremstyle{remark}{
\newtheorem{notation}[theorem]{Notation}
}






\newcommand{\cC}{\ensuremath{\mathcal C}}

\newcommand{\cF}{\ensuremath{\mathcal F}}

\newcommand{\cH}{\ensuremath{\mathcal H}}

\newcommand{\cL}{\ensuremath{\mathcal L}}

\newcommand{\cP}{\ensuremath{\mathcal P}}

\newcommand{\cT}{\ensuremath{\mathcal T}}


\newcommand{\bbC}{{\ensuremath{\mathbb C}} }

\newcommand{\bbE}{{\ensuremath{\mathbb E}} }

\newcommand{\bbN}{{\ensuremath{\mathbb N}} }

\newcommand{\bbP}{{\ensuremath{\mathbb P}} }
\newcommand{\bbQ}{{\ensuremath{\mathbb Q}} }
\newcommand{\bbR}{{\ensuremath{\mathbb R}} }
\newcommand{\bbS}{{\ensuremath{\mathbb S}} }

\newcommand{\bbU}{{\ensuremath{\mathbb U}} }

\newcommand{\bbZ}{{\ensuremath{\mathbb Z}} }

\newcommand{\E}{\mathbb{E}}

\title{Central limit theorem for T-graphs}

\author{Beno\^it  Laslier}
\address{
 Institut Camille Jordan, Universit\'e Lyon 1, 43
  bd du 11 novembre 1918, 69622 Villeurbanne, France\\
E-mail: laslier@math.univ-lyon1.fr}

\date{}

\begin{document}

\begin{abstract}
  In this paper, we establish a 
quenched invariance principle for the random
  walk on a certain class of infinite, aperiodic, oriented
 random planar
  graphs called ``T-graphs'' \cite{Kenyon2007b}. These graphs appear,
  together with the corresponding random walk, in a work
  \cite{Kenyon2007} 
  about the lozenge tiling model, where
  they are used to compute correlations between lozenges inside large finite
  domains. The random walk in question is balanced, i.e. it is
  automatically a martingale.  

Our main ideas are inspired by the proof of a
  quenched central limit theorem in stationary ergodic environment on $\bbZ^2$ \cite{Lawler1982, Sznitman2002}.
  This is somewhat surprising, since the environment 
  is neither defined on $\bbZ^2$ nor really random: the graph is instead quasi-periodic and all the
  randomness is encoded in a single random variable $\lambda$ that is
  uniform in the unit circle.
  We prove that the covariance matrix of the
  limiting Brownian Motion is proportional to the identity, despite
  the fact that the graph does not have obvious symmetry
  properties. This covariance is identified using the knowledge
  of a specific discrete harmonic function on the graph, which is provided by the link
  with lozenge tilings.
\end{abstract}

\maketitle

\section{Introduction}\label{sec:intro}



In \cite{Kenyon2007} and in \cite{Kenyon2007b}, a class of aperiodic
graphs called $T$-graphs was introduced and some deep links between
these graphs and both the uniform spanning tree and the dimer or
perfect matching model were proved. In particular in \cite{Kenyon2007}
they appear as a tool to express correlation functions in the
hexagonal dimer model (also known as the lozenge tiling model) see
Section \ref{sec:dimers} for
details. 
Using this method, one can relate the large scale behavior of dimers,
which is one of the main questions on dimer models, to the way
discrete harmonic functions approximate a continuous harmonic limit.
Here we give a further step in this direction by proving a central
limit theorem for the random walk on these graphs, which shows that
discrete harmonic functions do indeed resemble continuous harmonic
functions on large scale. This is not trivial because T-graphs
generically do not have any exact symmetry. We became aware while
finishing the writing of this paper of another work on harmonic
functions on T-graph \cite{Li2013}. Their methods are very different
from ours since they do not use a central limit theorem. They work
with more general graphs but only obtain convergence of discrete
functions to their continuous counterpart along sub-sequences.

\begin{figure}
  \includegraphics[width=0.5\textwidth]{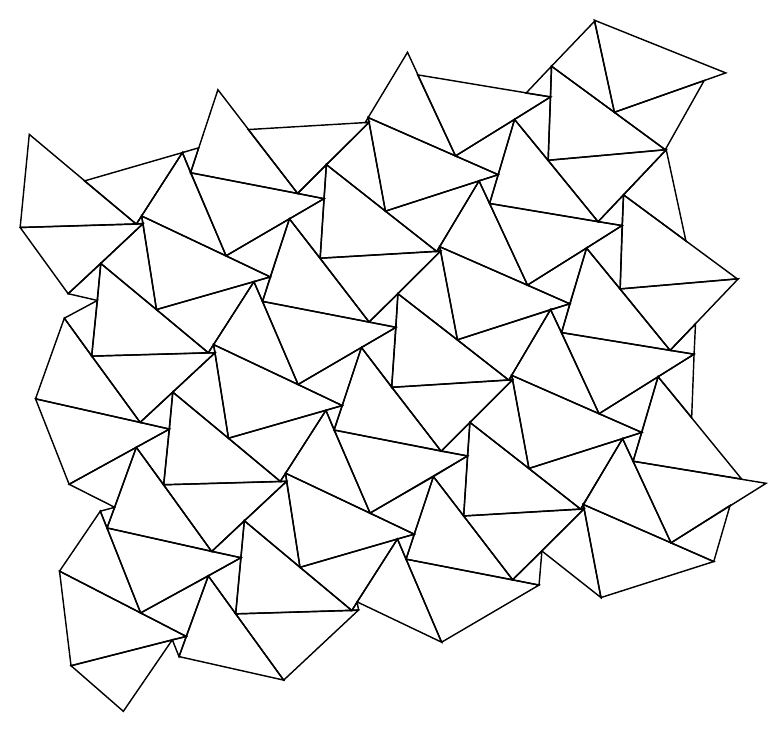}
\caption{A picture of a finite domain inside a T-graph showing clearly the properties of \ref{prop:prop_geometrique}. Image taken from \cite{Kenyon2007}.}
\label{fig:image_globale}
\end{figure}

Unfortunately the method we use does not provide any speed of
convergence so we do not get any accurate estimate on the difference
between continuous and discrete harmonic functions.
However the result is valuable on its own because 
our environment is very far from being IID: all the randomness in the T-graph is encoded in a uniform variable $\lambda $ on the unit circle, and conditionally on $\lambda$ the graph is deterministic and quasi-periodic.
In this framework interesting mathematical
challenges arise. Increments of the random walk are highly correlated and
some of the important concepts used with random environment, like
renewal times, cannot be used. Furthermore the definition of the graph
itself is quite involved so even simple facts like connectedness are
non trivial. One can refer to \cite{Zeitouni2002} for a general overview of random walk in random environment.

Keeping these difficulties in mind, it is striking to see that the ideas of
\cite{Lawler1982, Sznitman2002} carry on; the proof is thus also a
testimony of the robustness of the method. In particular an important
point to note is the role of ergodicity of the graph (i.e. the
environment) with respect to translations. Usually one looks at
ergodicity with respect to some group.
Here on the other hand, the translations that send a vertex to another do
 \emph{not} form a group (or any usual algebraic structure)
so one might think that we cannot use ergodic theory. However we do
not need any structure on translations to define ergodicity in the
sense that any translation invariant event must have probability $0$
or $1$. As it will appear later (see Remark \ref{rq:ergodicite}) this will give enough information on
the (spatial) environment to prove that trajectories of the random walk
are ergodic with respect to time shifts (which do form a semi-group)
and to use Birkhoff ergodic theorem. This remark might be useful to
study the random walk on other kind of environments where translations
do not form a group, like random graphs.

The rest of the paper is organised as follows. We first give the
construction of the graphs we are interested in (section
\ref{sec:construction}) and derive some useful properties
(section \ref{sec:geometrie}). The random walk we will use is then defined in section \ref{sec:marche_aleatoire}, where we also state the main theorem on quenched invariance principle and convergence of discrete harmonic function. The proof of the theorem
is divided into two independent parts. In section \ref{sec:TCL}
we use ergodicity arguments and the martingale invariance principle
to prove 
almost-sure convergence of the random walk to a brownian motion with
an unknown deterministic covariance. Finally in section
\ref{sec:covariance} we see that the covariance has to be proportional
to the identity, 
using the a priori knowledge of a harmonic function provided by
the mapping with random lozenge tilings \cite{Kenyon2006,Kenyon2007b}.

\section{T-graph construction}\label{sec:construction_hex}

In this section, we construct the family of graphs and the random walk we will study in the later sections. The specific structure of the graphs produced by the construction will be of key importance in section \ref{sec:topologie}.

\subsection{Hexagonal lattice}\label{sec:coordonnees}


\begin{wrapfigure}{R}{6.5cm}
\begin{center}
  \includegraphics[width=5cm]{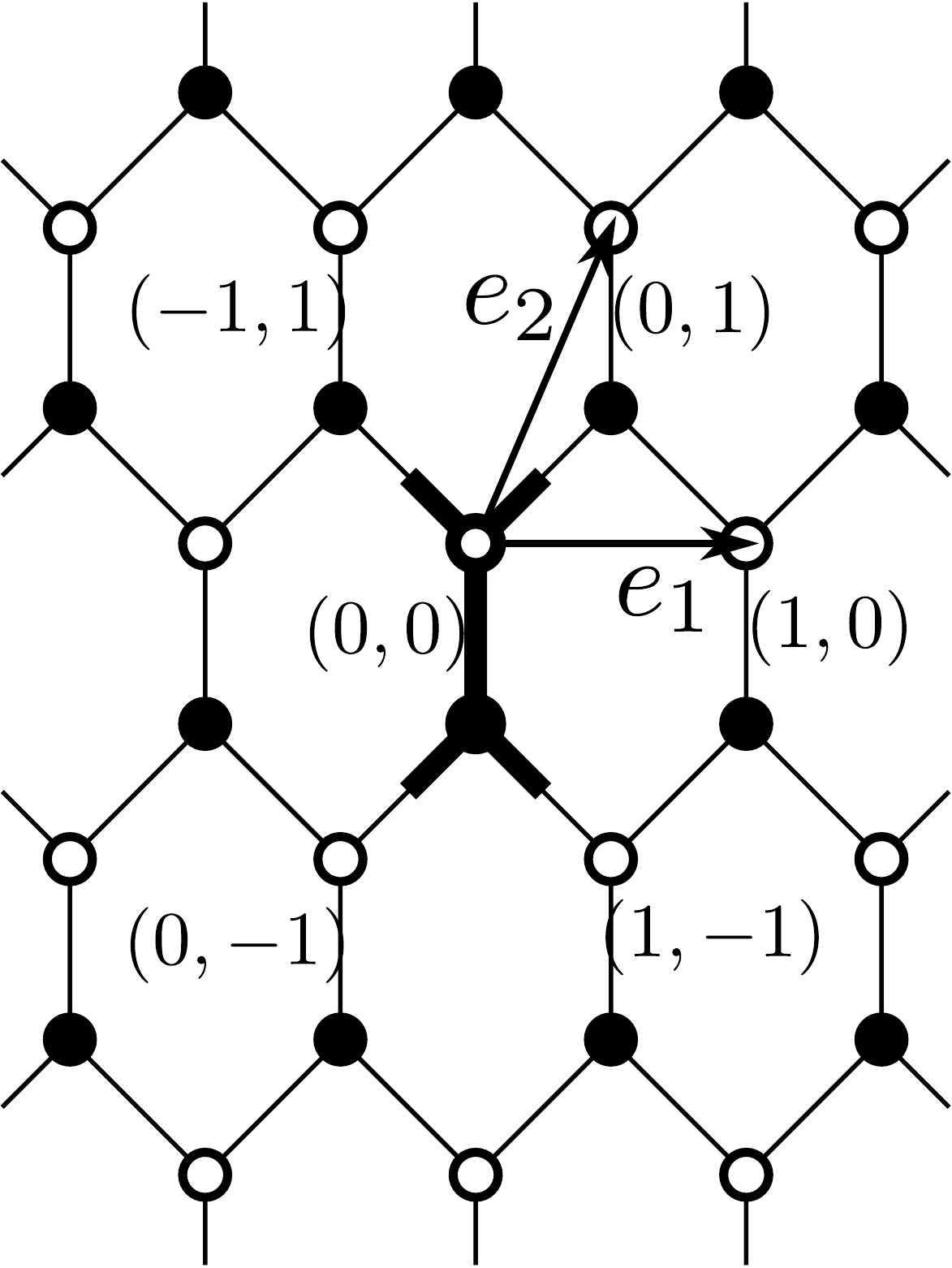}
\captionsetup{width=6cm}
\caption{An illustration of the coordinates we use on the hexagonal lattice. Near each vertical edge are indicated the (common) coordinates of its two endpoints.}
\label{fig:coordonees}
\end{center}
\vspace{-1cm}
\end{wrapfigure} 

First of all we define suitable coordinates on the infinite hexagonal
lattice. Of course the specific choice we give here plays no essential
role so this section is only about fixing notations. However it is
still quite important in practice because we will use several explicit
formulas that depend on the choice of coordinates.

\begin{notation}\label{def:coordonees}
  We embed the hexagonal lattice $\cH$ in the plane in the way
  represented in figure \ref{fig:coordonees}. We call fundamental domain and write
  $\cH_1$ the two vertices with thicker lines. We let $e_1$ and $e_2$
  be the two vectors represented. Given $v$ a vertex of $\cH$, we call
  coordinates of $v$ the unique $n,m$ such that $v-me_1-ne_2 \in
  \cH_1$. Note that given $m,n$ there are exactly two vertices
  with coordinates $(m,n)$, the top one is called a white vertex, the
  bottom one is called black. 

  We will write $m(v)$ and $n(v)$ for the coordinates of the vertex
  $v$. We will also write $b(m,n)$ and $w(m,n)$ for the black and
  white vertices of coordinates $(m,n)$.
\end{notation}

\begin{remark}\label{rq:valeur_fg}
  The three neighbors of a point $b(m,n)$ are $w(m,n)$, $w(m,n-1)$ and
  $w(m+1,n-1)$ while the three neighbors of $w(m,n)$ are $b(m,n)$,
  $b(m,n+1)$ and $b(m-1,n+1)$. We will call edges $w(m,n)b(m,n)$
  \emph{vertical}, edges $w(m,n)b(m,n+1)$ \emph{north east-south west}
  (NE-SW) and edges $w(m,n)b(m-1,n+1)$ \emph{north west-south east}
  (NW-SE).
\end{remark}

\begin{notation}
  We write $\cH^*$ for the dual graph of $\cH$. This is a triangular 
lattice. Each of its faces contains a vertex of $\cH$ and it is 
called black/white according to the color of that vertex. Vertices of $\cH^*$ 
can be associated to the point in the centre of a face of $\cH$.
For a vertex $v$ of
  $\cH^*$ we let $(m(v),n(v))$ be the (common) coordinates of the two
  points just right of $v$.
\end{notation}

\subsection{Construction}\label{sec:construction}


T-graphs are defined by integration of an explicit 1-form on the edges
of $\cH^*$. In this section we define this form and verify that its
primitive is well defined.

\begin{notation}
  Let $\lambda$ be a complex number of modulus one and let $\Delta$ be
  a triangle of area one. We let $a\alpha$, $b\beta$ and $c\gamma$ be
  the complex numbers corresponding to its sides, taken in the
  counterclockwise order, with $a,b,c$ real positive and $\alpha,
  \beta, \gamma$ complex of modulus one. These parameters will be
  fixed for the rest of the section and will thus be often omitted
  from the notations.
\end{notation}

The role of $\lambda$ will be clarified in Proposition \ref{prop:translation}
and in Section \ref{sec:topologie}.

\begin{notation}
We define the following functions :
\begin{itemize}
  \item $f$ is defined on white vertices by $f(w(m,n)) = (\frac{\beta}{\gamma})^{m} (\frac{\beta}{\alpha})^{n}$
  \item $g$ is defined on black vertices by $g(b(m,n)) = \alpha (\frac{\beta}{\gamma})^{m} (\frac{\beta}{\alpha})^{n}$
  \item $K(w,b)$ on edges defined by $K(w,b)=a$ on vertical edges, $b$ on SW-NE edges and $c$ on NW-SE edges.
\end{itemize}
Remark that the three functions depend on $\Delta$ and not on
$\lambda$. We will write $f_\Delta$, $g_\Delta$ and $K_\Delta$ if we
want to emphasize this dependence.
\end{notation}

\begin{remark}
  $f$ and $g$ are defined in order to have $\bar f(w) g(b)$ equals to $\alpha$ (resp. $\beta$, $\gamma$) when $w$ and $b$ are the endpoints of a vertical (resp. NE-SW, NW-SE) edge.
\end{remark}

\begin{proposition}\label{prop:divergence_nulle}
We have :
  \begin{itemize}
    \item for any black vertex $b$, $\sum_{w \sim b} f(w) K(w,b) = 0$
    \item for any white vertex $w$, $\sum_{b \sim w} K(w,b) g(b) =0$,
  \end{itemize}
\end{proposition}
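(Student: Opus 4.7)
The plan is to prove both identities by direct computation, with the key input being the geometric fact that the sides of the triangle $\Delta$, viewed as vectors, close up: $a\alpha + b\beta + c\gamma = 0$. Since $\alpha,\beta,\gamma$ all have modulus one, taking the complex conjugate yields $a/\alpha + b/\beta + c/\gamma = 0$ as well. These two linear relations are exactly what will make each sum collapse.

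I would begin with the white-vertex identity, which uses directly the triangle closure. Fix $w(m,n)$; by Remark \ref{rq:valeur_fg} its three neighbors are $b(m,n)$ via a vertical edge ($K=a$), $b(m,n+1)$ via a NE-SW edge ($K=b$), and $b(m-1,n+1)$ via a NW-SE edge ($K=c$). Substituting the formula for $g$ and factoring out the common monomial $\alpha (\beta/\gamma)^m (\beta/\alpha)^n$, the sum becomes
\begin{equation*}
\alpha \left(\tfrac{\beta}{\gamma}\right)^m \left(\tfrac{\beta}{\alpha}\right)^n \left[a + b\,\tfrac{\beta}{\alpha} + c\,\tfrac{\gamma}{\alpha}\right] = \left(\tfrac{\beta}{\gamma}\right)^m \left(\tfrac{\beta}{\alpha}\right)^n \bigl(a\alpha + b\beta + c\gamma\bigr),
\end{equation*}
which vanishes by the triangle relation.

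Next I would handle the black-vertex identity in the same style. Fix $b(m,n)$; its three neighbors are $w(m,n)$ (vertical, $K=a$), $w(m,n-1)$ (NE-SW, $K=b$), and $w(m+1,n-1)$ (NW-SE, $K=c$). Substituting $f$ and factoring $(\beta/\gamma)^m (\beta/\alpha)^{n-1}$, the bracket that remains is
\begin{equation*}
a\,\tfrac{\beta}{\alpha} + b + c\,\tfrac{\beta}{\gamma} = \beta \left(\tfrac{a}{\alpha} + \tfrac{b}{\beta} + \tfrac{c}{\gamma}\right),
\end{equation*}
which vanishes as the complex conjugate of $a\alpha+b\beta+c\gamma=0$, using that $|\alpha|=|\beta|=|\gamma|=1$.

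There is really no obstacle here beyond bookkeeping: the entire content of the proposition is that the two linear relations produced by the triangle (the equation itself and its conjugate) are precisely encoded in the definitions of $f$, $g$, $K$. The subtlety to watch for is only the consistent labelling of edge types when reading them from the black endpoint rather than the white endpoint, which is settled by Remark \ref{rq:valeur_fg}.
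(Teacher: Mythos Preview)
Your proof is correct and follows the same approach as the paper: both recognize that the three terms in each sum are, up to a common nonzero factor, the side vectors of $\Delta$ (or their conjugates), which close up. You are simply more explicit about the bookkeeping and about the fact that the black-vertex identity uses the conjugate relation $a/\alpha+b/\beta+c/\gamma=0$, which the paper's one-line proof leaves implicit.
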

where $w\sim b$ means that $w$ and $b$ are neighbouring vertices.
\begin{proof}
  The three terms of the sums are, up to a multiplicative constant,
  the edge vectors of $\Delta$ so they sum to $0$.
\end{proof}

\begin{notation}
We let $\phi_{\lambda\Delta}$ denote the following flow on oriented edges:
\[
  \phi(wb) = K(w,b) \Re(\bar \lambda \bar f(w)) \lambda g(b) 
\]
and $\phi(bw) = -\phi(wb)$, with $\Re(z)$ the real part of $z$. We let
$\phi^*$ denote the dual flow on oriented edges of $\cH^*$ obtained by
rotating $\phi$ by $+\pi/2$ (counter-clockwise). That is, crossing the edge $bw$
with the white vertex on the left gives flow $+ K(w,b) \Re(\bar
\lambda \bar f(w)) \lambda g(b) $

By Proposition \ref{prop:divergence_nulle}, $\phi$ has zero divergence and thus
the flow of $\phi^*$ around any face of $\cH^*$ is zero. This implies the existence of a function $\psi_{\lambda \Delta }$ on $\cH^*$, unique up to a constant, such that
$\forall v, v' \in \cH^*$, $\psi(v') - \psi(v) = \phi^*(vv')$. We fix
the constant by setting $\psi=0$ on $v(0,0)$ the vertex of $\cH^*$ on the left
of the fundamental domain.

We extend $\psi$ linearly to the edges of $\cH^*$, so that $\psi$ maps
$\cH^*$ to a subset of $\bbC$. We define $T_{\lambda\Delta} =
\psi_{\lambda\Delta }(\cH^*)$. $T$ is the ``T-graph'' we are
interested in.  We will see in Proposition \ref{prop:translation} that the graph so
obtained is ``quasi-periodic'' (or periodic if the angles of the triangle $\Delta$
are rational multiples of $\pi$).
\end{notation}

\begin{remark}
  The definition of $\phi$ might seem strange, especially taking the
  complex conjugate inside a real part. However we claim that this is
  the natural definition. Indeed if we remove the real part from the
  definition of $\phi$ we get a flow $\tilde \phi$ with $\tilde \phi =
  a \alpha$ (resp. $b\beta$,$c\gamma$) on vertical (resp. NW-SW,
  NW-SE) edges. Its primitive $\tilde \psi$ is the linear mapping
  from $\cH^*$ to $\bbC$ that makes all triangles similar to
  $\Delta$. In a sense with the real part we get a perturbation of
  this linear map where all black faces are flattened to segments. This will be
  made more clear in the next section.
\end{remark}

\begin{remark}
  We have not specified how to choose which side of $\Delta$ is
  $a\alpha$ and corresponds to vertical edges, so it may seem that
  there is an ambiguity in the construction. However this is not the
  case because $T$ does not depend on this choice. Indeed, making
  a different choice is equivalent to rotating the hexagonal lattice
  $\cH$ by $2\pi/3$ (which leaves $\cH$ invariant) so the new function
  $\tilde \psi$ verifies $\tilde\psi(x) = \psi( e^{2i\pi/3}x)$ and we
  see that $\tilde T=\tilde\psi( \cH^*)=\psi( \cH^*)=T$.
\end{remark}

\subsection{Geometric properties of T-graphs}\label{sec:geometrie}

These properties are given both
because they enter the proof of the central limit theorem, and also
because they allow to visualize the type of graphs we are working
with. The results and some of the proofs are taken from \cite{Kenyon2007} and \cite{Kenyon2007b} (in the latter a more general class of graphs is studied but full details
are only given in the case of a finite graph). We also give explicit
formulas when possible.

\begin{proposition}
  $\psi$ is almost linear, more precisely if $\ell(m,n)= \frac{a\alpha}{2}m - \frac{c \gamma}{2} n$, then $\psi(v) - \ell(m(v),n(v))$ is bounded.
\end{proposition}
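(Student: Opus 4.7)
We split the flow into a translation-invariant piece, whose primitive is exactly $\ell$, and a bounded oscillating piece. Using $\Re(z) = \tfrac{1}{2}(z+\bar z)$ the definition of $\phi$ decomposes as
\[
\phi(wb) = \underbrace{\tfrac{K(w,b)}{2}\,\bar f(w)\,g(b)}_{=:\,\phi_{\rm lin}(wb)} \;+\; \underbrace{\tfrac{\lambda^{2}K(w,b)}{2}\,f(w)\,g(b)}_{=:\,\phi_{\rm osc}(wb)}.
\]
Both pieces are divergence-free on $\cH$: the argument of Proposition \ref{prop:divergence_nulle} handles $\phi_{\rm osc}$ (and also $\phi_{\rm lin}$ at every white vertex), while divergence of $\phi_{\rm lin}$ at a black vertex reduces to $\sum_{w\sim b}\bar f(w)K(w,b)=0$, an identity that, as in Proposition \ref{prop:divergence_nulle}, follows from the triangle closure $a\alpha+b\beta+c\gamma=0$. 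Therefore $\phi_{\rm lin}^{*}$, $\phi_{\rm osc}^{*}$ admit primitives $\psi_{\rm lin}$, $\psi_{\rm osc}$ on $\cH^{*}$ and $\psi=\psi_{\rm lin}+\psi_{\rm osc}$.

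For the linear piece, the remark following Proposition \ref{prop:divergence_nulle} gives $\bar f(w)g(b)\in\{\alpha,\beta,\gamma\}$ depending only on the type of the edge $wb$; thus $\phi_{\rm lin}$ equals $\tfrac{a\alpha}{2},\tfrac{b\beta}{2},\tfrac{c\gamma}{2}$ on vertical, NE-SW, NW-SE edges respectively. This flow is invariant under the translation lattice $\bbZ e_{1}\oplus\bbZ e_{2}$; so is $\phi_{\rm lin}^{*}$, and consequently $\psi_{\rm lin}$ is affine in $(m(v),n(v))$. Evaluating $\psi_{\rm lin}$ on two dual neighbours of $v(0,0)$ in two independent lattice directions identifies the slopes and yields $\psi_{\rm lin}(v) = \ell(m(v), n(v))$ up to an additive constant.

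For the oscillating piece, the identity $g(b) = (\bar f(w)g(b))\, f(w) = \xi\, f(w)$ with $\xi \in \{\alpha,\beta,\gamma\}$ (valid because $|f(w)|=1$) gives
\[
\phi_{\rm osc}(wb) = \lambda^{2}\, f(w)^{2}\,\phi_{\rm lin}(wb).
\]
In particular $|\phi_{\rm osc}|$ is uniformly bounded while the phase $f(w)^{2} = (\beta/\gamma)^{2m(w)}(\beta/\alpha)^{2n(w)}$ is a multiplicative character in $(m,n)$. I then propose the ansatz
\[
h(v) = \kappa\,\lambda^{2}\,(\beta/\gamma)^{2m(v)}(\beta/\alpha)^{2n(v)}, \qquad \kappa\in\bbC,
\]
as a primitive of $\phi_{\rm osc}^{*}$. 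The required identity $h(v')-h(v)=\phi_{\rm osc}^{*}(vv')$ reduces, for each of the three primal edge types, to a single linear equation in $\kappa$; provided $\Delta$ is non-degenerate (so $(\beta/\gamma)^{2}, (\beta/\alpha)^{2}, (\gamma/\alpha)^{2}$ all differ from $1$) each equation determines $\kappa$ uniquely, and the three values agree as a direct consequence of the closure $a\alpha+b\beta+c\gamma=0$. Since $|h(v)|=|\kappa|$, the function $h$ is bounded, and by uniqueness of the primitive $\psi_{\rm osc}-h$ is constant; hence $\psi-\ell$ is bounded.

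\textbf{Main obstacle.} The proof is essentially a verification, and the only delicate point is bookkeeping: identifying precisely which primal edge is crossed, and with which orientation, when $v$ moves to a neighbour in $\cH^{*}$ under the $+\pi/2$ rotation convention that defines $\phi^{*}$. Once these conventions are pinned down, both the computation of the slopes of $\psi_{\rm lin}$ and the compatibility of the three equations for $\kappa$ drop out of the triangle closure, and no genuine algebraic obstruction appears.
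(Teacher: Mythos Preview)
Your argument is correct and rests on the same decomposition as the paper: writing $\Re(\bar\lambda\bar f)=\tfrac12(\bar\lambda\bar f+\lambda f)$ splits $\phi$ into a translation-invariant piece (primitive $\ell$) and an oscillating piece. The paper simply sums both pieces along an explicit axis-parallel path in $\cH^{*}$, so the linear part drops out and the oscillating part is a bounded geometric series; you instead observe that each piece is separately closed and exhibit a closed-form bounded primitive $h(v)=\kappa\lambda^{2}(\beta/\gamma)^{2m}(\beta/\alpha)^{2n}$ for the oscillating one. This is a neat repackaging---you avoid choosing a path and get an explicit formula for $\psi-\ell$---but the underlying mechanism is identical.

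One small remark on your justification: the compatibility of the three determinations of $\kappa$ does not follow from $a\alpha+b\beta+c\gamma=0$ alone, but also needs the conjugate relation $a\bar\alpha+b\bar\beta+c\bar\gamma=0$ (equivalently $a\beta\gamma+b\alpha\gamma+c\alpha\beta=0$). This is of course still ``triangle closure,'' and in any case the compatibility is already forced by the closedness of $\phi_{\rm osc}^{*}$ that you established via Proposition~\ref{prop:divergence_nulle}, so there is no gap.
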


\begin{proof}
  This follows from direct computation. Let $v$ a vertex of $\cH^*$: it is on the left of the vertical edge of coordinates $(m,n)$ so, assuming for simplicity both coordinates are positive, 
\begin{align*}
  \psi(v) & = \sum_{j=0}^{m-1} \phi^*( w(0,j)b(0,j)) + \sum_{j=0}^{n-1} \phi^*( w(m,j)b(m-1,j+1) ) \\
	  & = \sum_{j=0}^{m-1} (+ a) \frac{1}{2}\left(\bar \lambda \left(\frac{\beta}{\gamma}\right)^{-j} + \lambda \left(\frac{\beta}{\gamma}\right)^j \right) \alpha\lambda \left(\frac{\beta}{\gamma}\right)^j \\
	  &  \quad
	+ \sum_{j=0}^{n-1} (-c) \left(
			 \bar \lambda \left(\frac{\beta}{\gamma}\right)^{-m}
				       \left(\frac{\beta}{\alpha}\right)^{-j}
			+
			\lambda \left(\frac{\beta}{\gamma}\right)^{m} 
				  \left(\frac{\beta}{\alpha}\right)^{j}
			\right) 
	    \lambda\alpha \left( \frac{\beta}{\gamma}\right)^{m-1} 
			   \left(\frac{\beta}{\alpha}\right)^{j+1}.
\end{align*}
In each sum there are two terms. In the first the powers of $\alpha$,
$\beta$, $\gamma$ cancel out so they give a linear contribution, in
the second they add up so these terms oscillate and give bounded
geometric sums. Finally
\[
  \psi(v) = +\frac{a\alpha}{2}m - \frac{c \gamma}{2} n + \text{bounded}.
\]
When the coordinates are not positive the same computation holds and we just have to change the signs.
\end{proof}

\begin{remark}
  Since $\alpha$ and $\gamma$ are not collinear, the linear application $\ell$ is non degenerate and
  $T$ covers the whole plane: any point of $\bbC$ is at
  bounded distance from $T$.
\end{remark}

The following result shows that $T$ is quasi-periodic, in the sense that translations of the graphs have properties similar to iterates of an irrational rotation, or periodic if the 
ratios $\beta/\alpha$ and $\beta/\gamma$ are both roots of the unity. Also, it clarifies
the role of $\lambda$ in the 
  construction:  a translation of the graph is
  equivalent to a change of $\lambda$.

\begin{figure}
  \includegraphics[width = 0.7\textwidth]{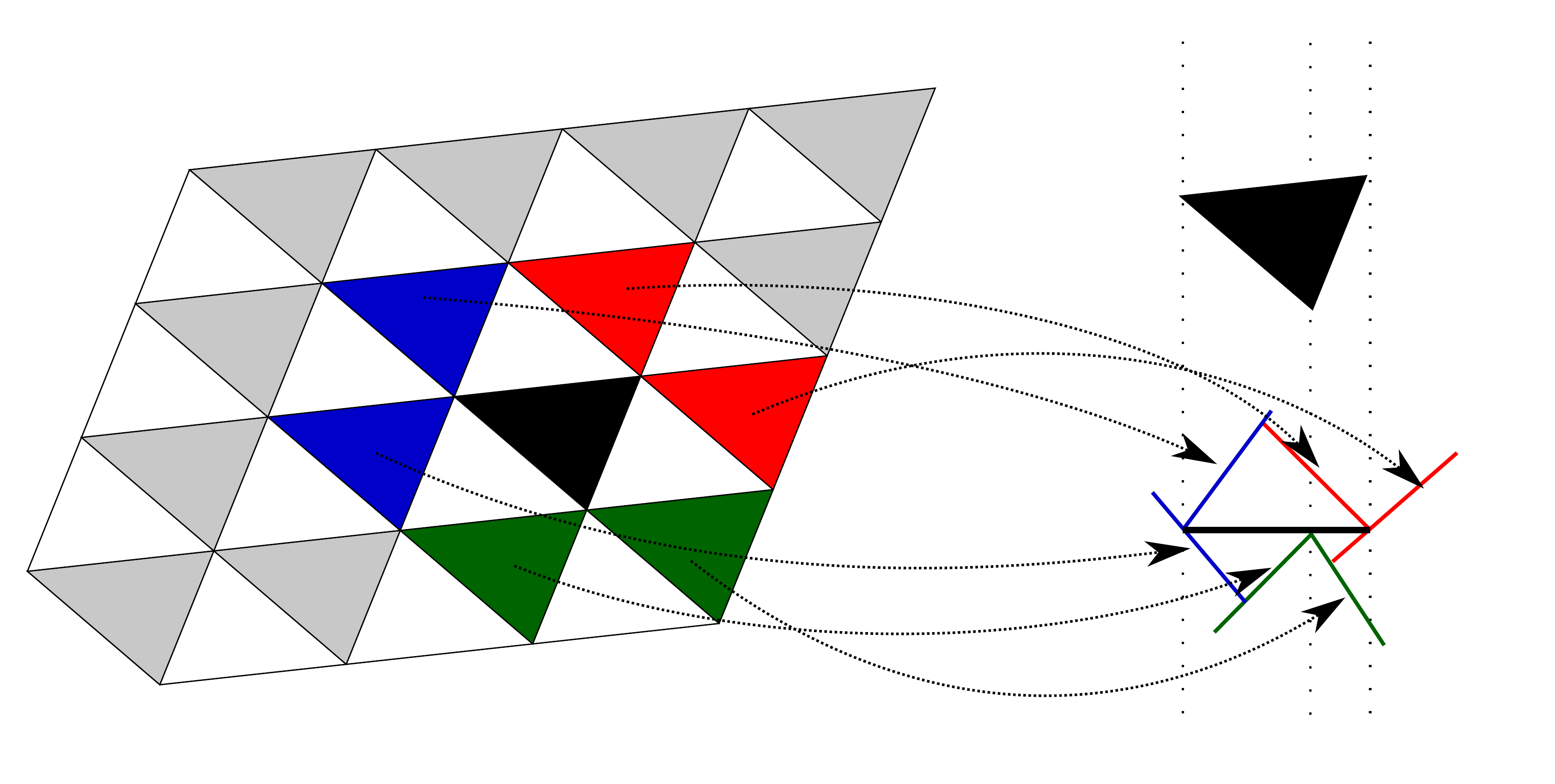}
\caption{A schematic view of the image of a black face and its neighbours in $\cH^*$. }
\label{fig:image_locale}
\end{figure}

\begin{proposition}\label{prop:translation}
  Let $T_{\lambda\Delta}$ the graph constructed above (recall that we
  choose $\psi=0$ on the vertex of $\cH^*$ just left of the fundamental
  domain $\cH_1$). Let $v $ be 
the
 vertex of $\cH^*$ of coordinates $(m,n)$ and let
  $T'$ be the graph constructed in the same way but taking
  $\psi(v)=0$. Then we have $T' = T_{\lambda'\Delta}$ with $\lambda'=
  \lambda (\frac{\beta}{\gamma})^{m} (\frac{\beta}{\alpha})^{n}$.
\end{proposition}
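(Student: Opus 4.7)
My plan is to reduce the identity $T' = T_{\lambda'\Delta}$ to a statement about the underlying $1$-form $\phi_{\lambda\Delta}$: translating the edge on which $\phi$ is evaluated by an integer combination of the two lattice vectors corresponds exactly to replacing $\lambda$ by $\lambda' = \lambda(\beta/\gamma)^m(\beta/\alpha)^n$.

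The first observation is that the functions $f$ and $g$ behave like characters of the lattice. If we let $\tau_{m,n}$ be the natural translation on $\cH$ (and on $\cH^*$) sending a point of coordinates $(m',n')$ to a point of coordinates $(m'+m,n'+n)$, then directly from the explicit formulas we get
\[
f(\tau_{m,n}(w)) = \mu\, f(w), \qquad g(\tau_{m,n}(b)) = \mu\, g(b),
\]
with $\mu := (\beta/\gamma)^m(\beta/\alpha)^n$, and $|\mu|=1$. Since $K$ only depends on the type (vertical/NE-SW/NW-SE) of an edge, which is preserved by $\tau_{m,n}$, substituting into the definition of $\phi$ gives
\[
\phi_{\lambda}(\tau_{m,n}(w)\tau_{m,n}(b)) = K(w,b)\,\Re\!\bigl(\bar\lambda\bar\mu\,\bar f(w)\bigr)\lambda\mu\, g(b)= \phi_{\lambda'}(wb),
\]
using $\bar\mu\mu=1$ so that $\bar\lambda' = \bar\lambda\bar\mu$ and $\lambda' = \lambda\mu$. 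Rotating by $\pi/2$ this gives $\phi^*_{\lambda}\circ\tau_{m,n} = \phi^*_{\lambda'}$ on oriented edges of $\cH^*$.

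Next I would integrate this identity. For any path $v(0,0)=u_0\to u_1 \to \cdots \to u_k = v'$ in $\cH^*$, summing the previous relation along the translated path gives
\[
\psi_\lambda(\tau_{m,n}(v')) - \psi_\lambda(\tau_{m,n}(v(0,0))) = \psi_{\lambda'}(v') - \psi_{\lambda'}(v(0,0)) = \psi_{\lambda'}(v'),
\]
where I used that $\psi_{\lambda'}$ is normalized to vanish at $v(0,0)$. By definition, $v$ is the vertex with coordinates $(m,n)$, i.e.\ $v = \tau_{m,n}(v(0,0))$, so the left-hand side is exactly $\psi_\lambda(\tau_{m,n}(v')) - \psi_\lambda(v)$. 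But $\psi'$, the primitive normalized so that $\psi'(v)=0$, is precisely $\psi_\lambda - \psi_\lambda(v)$, whence
\[
\psi'(\tau_{m,n}(v')) = \psi_{\lambda'}(v') \qquad \text{for every } v'\in\cH^*.
\]

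Finally, since $\tau_{m,n}$ is a bijection of $\cH^*$, taking images on both sides yields
\[
T' = \psi'(\cH^*) = \psi'(\tau_{m,n}(\cH^*)) = \psi_{\lambda'}(\cH^*) = T_{\lambda'\Delta},
\]
which is the claimed equality. The only real obstacle is bookkeeping: one has to be careful that the base-point convention for $\psi_{\lambda'}$ (vanishing at $v(0,0)$) matches the base-point convention for $\psi'$ (vanishing at $v$) once the translation $\tau_{m,n}$ is applied; but this is exactly what the explicit identification of $v$ with $\tau_{m,n}(v(0,0))$ delivers.
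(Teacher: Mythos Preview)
Your proof is correct and follows the same approach as the paper's: you make explicit the one-line argument that translating by $(m,n)$ multiplies both $f$ and $g$ by the character $\mu=(\beta/\gamma)^m(\beta/\alpha)^n$, which has exactly the same effect on $\phi$ as replacing $\lambda$ by $\lambda\mu$. The paper states this without writing out the integration/basepoint bookkeeping, which you have carried through carefully.
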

\begin{proof}
  This is immediate from the definition : the change of $\lambda$ is equivalent to multiplying both $f$ and $g$ by $(\frac{\beta}{\gamma})^{m} (\frac{\beta}{\alpha})^{n}$ which in turn corresponds to a translation of the origin.
\end{proof}

Here are collected some geometric facts about $T$:
\begin{proposition}\label{prop:prop_geometrique} 
  $T$ has the following properties (see Figure \ref{fig:image_globale}):
\begin{enumerate}
\item \label{prop:prop_geometrique:face_noire} The image of the black
  face of $\cH^*$ containing the vertex $b$ of $\cH$ is a segment.
More precisely, it is a suitable translation of  $-\lambda g(b) \Re(
  \bar \lambda \bar g(b)  \Delta)$ (we use the convention that, for a subset $U\subset \bbC$, $\Re(U)=\{\Re(z),z\in U \}$);
\item \label{prop:prop_geometrique:face_blanche} The image of the
  white face containing the vertex $w$ is a triangle similar to
  $\Delta$ and with the same orientation. More precisely, it is a
  translation of $\lambda f(w) \Re(\bar \lambda \bar f(w) )
  \Delta$. Remark that multiplication by $ \lambda f(w)$ is a rotation
  and multiplication by $\Re(\bar \lambda \bar f(w) ) $ is a
  contraction;
\item \label{prop:prop_geometrique:non_degenere} The length of
  segments is uniformly bounded away from zero independently of
  $\lambda$; for generic $\lambda$ no triangle is degenerate to a
  point ;
\item \label{prop:prop_geometrique:endpoints} For any $\lambda$ (resp. generic $\lambda$),
    for any vertex $v$ of $\cH^*$, $\psi(v)$ belongs to at least (resp. exactly) three
    segments: generically it is an endpoint of two of them and in the interior of
    the third one. All endpoints of segments are of the above form
    $\psi(v)$ with $v$ a vertex of $\cH^*$;
 \item \label{prop:prop_geometrique:non_recouvrement} The triangular images of
white faces
    cover the plane and do not intersect, that is any $x$ not in a 
    segment belongs to a unique face of the T-graph;
  \item \label{prop:prop_geometrique:non_intersection} Segments
    do not intersect in their interior.
 \end{enumerate}
\end{proposition}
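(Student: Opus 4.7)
Items (1)--(2) should fall out of a direct computation of the increments of $\psi$ around a single face of $\cH^*$; items (3)--(4) follow from those computations plus elementary compactness and genericity arguments; items (5)--(6) are global in nature and I expect them to be the main obstacle, since they require promoting local information to a statement about the whole plane. For (1), I would enumerate the three edges of $\cH^*$ bounding the black face containing $b$. Each such dual edge crosses a primal edge $w_ib$, and by construction the increment of $\psi$ along it equals $\pm\phi(w_ib)=\pm K(w_i,b)\,\Re(\bar\lambda\bar f(w_i))\,\lambda g(b)$. Since $K(w,b)$ and $\Re(\cdot)$ are real, all three increments are real multiples of the single complex number $\lambda g(b)$, so the image lies on a line. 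Using $|g(b)|=1$ and the identity $\bar f(w)g(b)\in\{\alpha,\beta,\gamma\}$, I can rewrite $\bar f(w_i)=x_i\bar g(b)$ for the appropriate $x_i\in\{\alpha,\beta,\gamma\}$; a bookkeeping step then identifies the segment with a translation of $-\lambda g(b)\,\Re(\bar\lambda \bar g(b)\Delta)$. Item (2) is the symmetric computation around a white vertex: the three increments take the form $\Re(\bar\lambda \bar f(w))\,\lambda f(w)\cdot s_i$ where $s_i$ runs over the sides $a\alpha,b\beta,c\gamma$ of $\Delta$. Multiplication by $\lambda f(w)$ (modulus $1$) is a rotation and multiplication by $\Re(\bar\lambda \bar f(w))$ is a real scaling, so the image is similar to $\Delta$; orientation is preserved because real scaling preserves orientation in the plane.

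For (3), the length of the segment at $b$ is controlled by the quantities $|\Re(\mu s_i)|$, where $\mu=\bar\lambda \bar g(b)$ lies on the unit circle and $s_1,s_2,s_3$ are the sides of $\Delta$. These three reals sum to zero, and since $\alpha,\beta,\gamma$ are pairwise non-parallel at most one of them can vanish for a given $\mu$; the continuous function $\mu\mapsto\sum_i|\Re(\mu s_i)|$ is therefore strictly positive on the unit circle and bounded below by compactness, giving a uniform length bound independent of $\lambda$. Degeneracy of the white triangle at $w$ is equivalent to $\Re(\bar\lambda \bar f(w))=0$; the bad set for each $w$ consists of two antipodal points on the unit circle, hence is negligible for generic $\lambda$. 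For (4), a vertex $v$ of $\cH^*$ is incident to three black faces, and on each one $\psi(v)$ is one of the three ordered points along the corresponding segment. The ordering is dictated by the signs of the $\Re(\mu s_i)$, which are of mixed sign because they sum to zero; in each black face one of the three vertices is the ``middle'' point and the other two are endpoints. A short combinatorial check, using how the three black faces around $v$ share edges of $\cH^*$ and how their respective $\mu$'s differ by explicit factors of $\beta/\alpha$ and $\beta/\gamma$, should show that generically $\psi(v)$ is the middle point of exactly one of these segments and an endpoint of the other two.

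The main obstacle is items (5) and (6). My plan is to consider the piecewise-linear extension $\tilde\psi$ of $\psi$ defined on each white face by the affine map from (2) and on each black face by the degenerate map from (1); then $\tilde\psi\colon\bbC\to\bbC$ is continuous and, by the previous proposition on near-linearity, is a bounded perturbation of the non-degenerate real-linear map $\ell$, so it is proper and has a well-defined topological degree at infinity equal to $1$. Combined with local injectivity of $\tilde\psi$ away from the segments---which should follow from the explicit rotation/scaling formulas of (1)--(2) together with the endpoint/interior distinction of (4), ensuring that the three white triangles around each vertex do not fold over each other---the standard degree argument would show that $\tilde\psi$ is a homeomorphism from $\bbC$ minus the segment set onto its image, which is (5). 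Item (6) then follows, since a transverse crossing of two segment interiors would force the degree of $\tilde\psi$ around the crossing point to differ from $1$. I expect the delicate step to be verifying local injectivity rigorously, ruling out any folding of white triangles at a vertex; this is presumably where the detailed analysis of \cite{Kenyon2007b} comes in, and where the geometry illustrated by Figure~\ref{fig:image_globale} is essential.
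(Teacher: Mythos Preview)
Your treatment of (1)--(3) matches the paper's: direct computation plus a compactness argument on the unit circle. For (5)--(6) you have the right global ingredient (degree/winding number equals $1$ via almost-linearity of $\psi$), but you have made the argument harder than it needs to be by isolating \emph{local injectivity} as the key step. The paper bypasses this entirely. The observation is that by (2) every white face has the same orientation as $\Delta$, so the winding number of $\psi(\partial w')$ around any point $x$ is either $0$ or $+1$ for every white face $w'$. Hence if two white faces both contained $x$ in their interior, any large simple loop $\cL$ in $\cH^*$ enclosing both would have $\psi(\cL)$ with winding number $\ge 2$ around $x$ (it is the sum of the nonnegative contributions from all enclosed white faces, and two of them contribute $+1$). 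This contradicts the almost-linearity bound, which forces winding number $1$ for $\cL$ large. No local-injectivity check is needed; positivity of orientation does all the work. Surjectivity then follows because each segment has white faces on both sides (one side covered by the two white faces at the interior vertex, the other by the third white face of the black triangle), so the union of closed faces has empty boundary.

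For (4), your plan to track signs combinatorially across the three black faces around $v$ is plausible but unnecessary. The paper instead \emph{deduces} the ``exactly three'' statement from (5): once you know the white triangles tile the plane, the four white faces in the star of $v$ (three sharing $v$, one opposite across the black face where $v$ is interior) already cover a full neighborhood of $\psi(v)$, so no further segment can pass through $\psi(v)$. This reverses your order of dependence and avoids the sign bookkeeping.
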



\begin{proof} 
  Points (\ref{prop:prop_geometrique:face_noire}),
  (\ref{prop:prop_geometrique:face_blanche}),
  (\ref{prop:prop_geometrique:non_degenere}) come directly from the
  construction. As an example we give the computation that proves
  (\ref{prop:prop_geometrique:face_blanche}).  Let $w$ be a white vertex
  of coordinates $m,n$. Let $v_1,v_2,v_3$ the vertices of $\cH^*$
  around $w$, taken in the counterclockwise order starting from the
  lower-left one. We have
\[
\psi(v_2) = \psi(v_1) + K(w,b(m,n)) \Re(\bar \lambda \bar f(w) ) \lambda g(b(m,n)) = \psi(v_1) + a \alpha \Re( \bar\lambda \bar f(w)) \lambda f(w) 
\]
 and similarly 
\[
  \psi(v_3) = \psi(v_2) + b \beta \Re( \bar \lambda \bar f(w)) \lambda f(w) .
\]
We see that the image of the white face around $w$ is equal to $\Delta$ rotated by $\lambda f(w)$ and scaled by $\Re( \bar \lambda \bar f(w))$.

For point (\ref{prop:prop_geometrique:endpoints}) it is immediate from
the construction that $\psi(u)$ is in at least three segments and
generically in the interior of one of them (just look at the three
segments corresponding to the black faces of $\cH^*$ around $u$). 
Let $S$ be this segment and let $b$ be the corresponding black face.
It is easy to check from the formulas that the image of the three white faces
neighboring $u$ cover on side of $S$, while the other side is covered by the image of the 
the white face neighboring $b$ and not $u$ (see figure \ref{fig:image_locale}).
By point (\ref{prop:prop_geometrique:non_recouvrement}), which does not requires this part of of point (\ref{prop:prop_geometrique:endpoints}), no other face cover a neighborhood of $\psi(u)$ and thus $\psi(u)$ is in no other segment.

We turn to point (\ref{prop:prop_geometrique:non_recouvrement}) from which point
(\ref{prop:prop_geometrique:non_intersection}) follows easily.  The key idea
of the proof is to combine almost linearity of $\psi$ with the fact
that all faces have the same orientation and to look at winding
numbers, see figure \ref{fig:pas_recouvrement} for an illustration.  Let $x \in \bbC$ and suppose by
contradiction that $x$ is in the interior of two faces
$\psi(w_1)$ and $\psi(w_2)$ (writing here, with some abuse of
notation, $w_1$ and $w_2$ for white faces of $\cH^*$). Let $\cC$
denote the (possibly non-connected) closed curve going once around
$w_1$ and once around $w_2$ anticlockwise: $\psi(\cC)$ has winding number $+2$ around $x$. Consider a simple closed curve $\cL$ in $\cH^*$ going
around both $w_1$ and $w_2$ in anticlockwise order. By point
(\ref{prop:prop_geometrique:face_blanche}), the image of a white face
of $\cH^*$ can only have winding number $0$ or $+1$ around a point. The winding number of $\psi(\cL)$ around $x$ is the sum of the winding numbers around $x$ of all the white faces inside $\cL$ so it is at least the winding number of $\psi(\cC)$, i.e.  at least  $+2$. 
On the other hand, suppose that the loop $\cL$ is  very large and that  both $w_1$ and $w_2$ are very far from it (but still inside it): then, 
by almost linearity of $\psi$, $\psi(\cL)$ has winding 
number $1$ around $x$, which leads to a contradiction.

\begin{figure}
\includegraphics[width=0.7\textwidth]{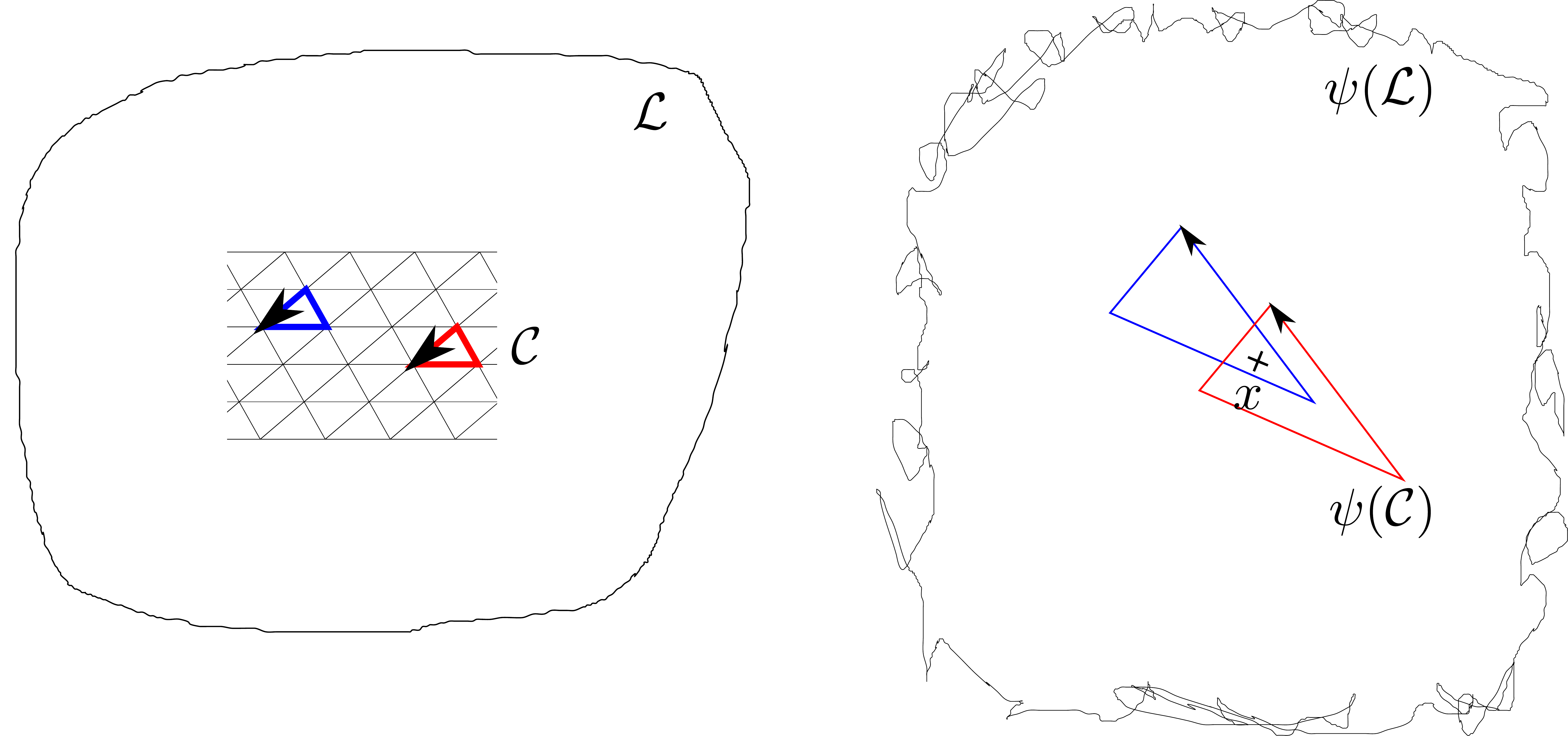}
\caption{An illustration of the proof that faces of $T$ do not overlap. The curve $\psi(\cC)$ made by the two triangles have winding number $2$ around $x$ while the large curve $\psi(\cL)$ have winding number $1$.}
\label{fig:pas_recouvrement}
\end{figure} 

Finally let $\Psi$ be the union of all closed faces, we need to check
that $\Psi = \bbC$. We already said for point (\ref{prop:prop_geometrique:endpoints}) that any
segment is adjacent to three faces, with two on one side and the last
one in the other. Thus segments are never on the boundary of $\Psi$ so
$\Psi$ has no boundary and since it is not empty 
and every point of $\bbC$ is at finite distance from $T$, we have $\Psi =
\bbC$.
\end{proof}

\begin{definition}\label{def:non_degenere}
  The image of a white face of $\cH^*$ is said to be degenerate if its
  size is zero. We will call such a point a degenerate face. A segment
  is said to be degenerate if it has no vertex in its interior. A
  T-graph is said to be degenerate if any of its segment or face is
  degenerate. We will say that a face (resp. a segment) is $\epsilon$
  almost degenerate if its area is smaller than $\epsilon^2$ (resp. has
  its interior point at distance less than $\epsilon$ from its
  endpoints).
Given an almost degenerate edge, we will call the sub-segment connecting 
the two vertices at distance at most $\epsilon$ the ``short sub-segment''.
\end{definition}

\begin{proposition}\label{prop:presque_degenere}
  There exists $\epsilon_0, C > 0$, depending only on $\Delta$, such that, for all $\epsilon \in (0,\epsilon_0)$, if $S$ is an $\epsilon$-almost-degenerate segment, then there exists a $C\epsilon$-almost-degenerate face $F$ adjacent to $S$. Furthermore for any $C\epsilon$-almost-degenerate face $F$, the three edges of $F$ are the short sub-segments of $C^2\epsilon$-almost-degenerate segments.
\end{proposition}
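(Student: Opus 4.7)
\noindent\emph{Proof plan.} The argument is a direct geometric translation of Proposition \ref{prop:prop_geometrique}. Recall that the image of a white face $w$ is a triangle similar to $\Delta$ with linear scale $r_w := |\Re(\bar\lambda\bar f(w))|$, so (since $\Delta$ has area one) $\psi(w)$ is $\eta$-almost-degenerate iff $r_w \leq \eta$, and each of its three edges has length $s \cdot r_w$ for some $s \in \{a,b,c\}$. The image of a black face $b$ is a segment whose length is proportional to $|\Re(\bar\lambda\bar g(b))|$. Both parts of the proposition therefore reduce to relating the length of an edge of a white-face triangle to its scale $r_w$.

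For Part 1, I would identify the required almost-degenerate face explicitly. Let $b$ be the black face with $\psi(b)=S$ and let $v_a, v_b, v_c$ be its corners in $\cH^*$, with $\psi(v_c)$ the interior vertex; up to relabelling $|\psi(v_a)-\psi(v_c)| \leq \epsilon$. The $\cH^*$-edge $v_a v_c$ is shared between $b$ and some white face $w'$, so its image $[\psi(v_a),\psi(v_c)]$ -- which is exactly the short sub-segment of $S$ -- is an edge of the triangle $F := \psi(w')$, and $F$ is adjacent to $S$. Since this edge has length $\geq \min(a,b,c)\cdot r_{w'}$, one reads off $r_{w'} \leq \epsilon/\min(a,b,c)$, so $F$ is $C\epsilon$-almost-degenerate for any $C \geq 1/\min(a,b,c)$.

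For Part 2, let $F = \psi(w)$ be $C\epsilon$-almost-degenerate, so $r_w \leq C\epsilon$ and all three edges of $F$ have length at most $C_1 \cdot C\epsilon$ with $C_1 := \max(a,b,c)$. Each edge of $F$ lies on a segment $S_i = \psi(b_i)$ for one of the three black faces $b_i$ adjacent to $w$, with endpoints $\psi(v_p),\psi(v_q)$ being the two corners of $b_i$ shared with $w$; let $v_r$ denote the third corner. By Proposition \ref{prop:prop_geometrique}(\ref{prop:prop_geometrique:endpoints}), exactly one of $\psi(v_p),\psi(v_q),\psi(v_r)$ is interior to $S_i$. If this interior vertex is $v_p$ or $v_q$, then the edge of $F$ is precisely the short sub-segment of $S_i$ and $S_i$ is $C_1 C\epsilon$-almost-degenerate. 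Otherwise the interior vertex is $v_r$; then $\psi(v_p),\psi(v_q)$ are the endpoints of $S_i$, so $|S_i|$ equals the length of the edge of $F$, i.e.\ at most $C_1 C\epsilon$. In this sub-case $\psi(v_r)$ is automatically within $C_1 C\epsilon$ of either endpoint, so $S_i$ is again $C_1 C\epsilon$-almost-degenerate and its short sub-segment coincides with the edge of $F$ up to this scale. Choosing $C$ large enough that $C \geq \max(C_1, 1/\min(a,b,c))$ (which depends only on $\Delta$) yields both parts with the claimed $C\epsilon$ and $C^2\epsilon$ bounds.

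The main obstacle is the combinatorial case analysis of Part 2 -- identifying which of the three corners of each adjacent black face is interior to its segment -- together with the control of the second sub-case, where the third corner $v_r$ is the interior vertex. This sub-case is what forces the final factor to be $C^2$ rather than $C$, and careful bookkeeping of the side-length ratios of $\Delta$ is needed to keep all constants $\Delta$-dependent only.
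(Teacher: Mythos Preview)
Your Part 1 is correct and matches the paper's argument exactly.

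In Part 2 your case split is right, but the handling of the second sub-case (interior vertex $v_r$) is where you diverge from the paper and where your argument becomes imprecise. If $\psi(v_r)$ is the interior point of $S_i$, then the edge of $F$ is the \emph{full} segment $[\psi(v_p),\psi(v_q)]$, whereas the short sub-segment of $S_i$ is, by definition, one of $[\psi(v_p),\psi(v_r)]$ or $[\psi(v_r),\psi(v_q)]$. These do not coincide, so the statement ``the three edges of $F$ are the short sub-segments'' fails literally in this sub-case; your phrase ``up to this scale'' is not what the proposition asserts.

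The paper's proof avoids this by observing that this sub-case simply cannot occur. By Proposition~\ref{prop:prop_geometrique}(\ref{prop:prop_geometrique:non_degenere}) the length of every segment is bounded below by some $\ell_0>0$ depending only on $\Delta$. Hence an edge of $F$, which has length at most $\max(a,b,c)\cdot C\epsilon$, cannot be a full segment once $\epsilon<\epsilon_0:=\ell_0/(C\max(a,b,c))$. This is precisely the role of $\epsilon_0$ in the statement, which you never used. With this observation only your first sub-case survives, and the conclusion is immediate.

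A minor correction: the factor $C^2$ has nothing to do with the second sub-case. It is just the chaining $r_w\le C\epsilon \Rightarrow$ edge length $\le \max(a,b,c)\,C\epsilon \le C^2\epsilon$, once $C\ge\max(a,b,c)$.
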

\begin{proof}
  Let $S$ be an almost degenerate segment and let $v_1,v_2$ be the endpoints of its small sub-segment
. By construction the segment
  $[v_1,v_2]$ is an edge of some face $F$. Since all faces are similar
  to $\Delta$, if one of the edges of $F$ is small then its area is
  small, with a ratio $C$ depending only on $\Delta$. For the same
  reason, if $F$ is almost degenerate than the length of its edges is
  small. However by proposition
  \ref{prop:prop_geometrique} point
  (\ref{prop:prop_geometrique:non_degenere}), segments have length
  bounded away from zero. Thus an edge of a small face cannot be a
  full segment which proves the end of the proposition.
\end{proof}

\subsection{The random walk and the main theorem}\label{sec:marche_aleatoire}

In this section we define the random walk and we state precisely
the invariance principle.


\begin{definition}
The random walk $X(t)$ on a non-degenerate graph $T$ is the continuous time Markov process on vertices of $T$ defined by the following jump rates. If the process is at a vertex $v$
of $T$, call $v^+, v^-$ the endpoints of the unique edge in the interior of which $v$ is contained. Then, the rates of the jumps from $v$ to $v^\pm$ are $1/\norm{v^\pm-v}$.
\end{definition}
Note that this random walk 
is automatically
a martingale thanks to the choice of the jump rates.

We can now state our main result:
\begin{theorem}[Quenched central limit theorem]\label{thm:principal}
  Let $\Delta$ be a triangle of area one and let $\lambda$ be a generic point in $S^1$. Let $X_t$ denote the random walk on $T_{\lambda \Delta}$, started from a point $v$. Then we have
\begin{itemize}
  \item $(\frac{X_{ nt}}{\sqrt{n}})_{t \in \bbR_+}$ converges in law to a Brownian motion
  \item The asymptotic covariance is proportional to the identity and does not depend on $\lambda$ or $v$ (it may depend on $\Delta$).
\end{itemize}
\end{theorem}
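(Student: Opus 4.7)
The plan is to adapt the classical approach to the quenched martingale invariance principle in stationary ergodic environments, as in \cite{Lawler1982, Sznitman2002}, and then to identify the covariance using the a priori harmonic function provided by the dimer model.

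By construction $X_t$ is already a martingale, and its jumps are uniformly bounded because, by Proposition \ref{prop:prop_geometrique}(\ref{prop:prop_geometrique:face_noire}), every segment of $T$ sits inside a translate of a real projection of $\Delta$ and so has length at most $\mathrm{diam}(\Delta)$. The Lindeberg condition of the martingale functional CLT is therefore automatic, and the first item reduces to proving almost sure convergence of the quadratic variation, $t^{-1}\langle X\rangle_t \to \Sigma$, for some deterministic $2\times 2$ matrix $\Sigma$. The instantaneous variance at a vertex $v$ equals $L(v)\,\hat u(v)\hat u(v)^T$, where $L(v)$ and $\hat u(v)$ are the length and direction of the segment whose interior contains $v$; this is a local, measurable function of the environment around $v$.

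To obtain the convergence of $\langle X\rangle_t/t$ I would pass to the environment viewed from the walker. By Proposition \ref{prop:translation}, translating $T$ so that $X_t$ sits at the origin amounts to replacing $\lambda$ by some $\lambda_t\in S^1$, so the environment-from-the-walker chain reduces to an $S^1$-valued process. I would identify an invariant probability measure for this chain (the natural candidate being normalised Lebesgue measure on $S^1$) and then prove ergodicity. This is the \emph{main obstacle}: the set of translations sending one vertex of $T$ to another does not form a group, so the usual framework of ergodicity with respect to a group action does not apply directly. Following Remark \ref{rq:ergodicite}, I would define spatial ergodicity as triviality of any translation-invariant event, and then upgrade this to time-shift ergodicity of the trajectories of $X$. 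Once this is in place, Birkhoff's theorem yields $t^{-1}\langle X\rangle_t\to\Sigma$ almost surely, and the martingale FCLT then gives convergence of $X_{nt}/\sqrt n$ to a Brownian motion with covariance $\Sigma$.

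For the isotropy of $\Sigma$, the absence of any manifest symmetry of $T$ rules out a symmetry-based argument, so I would instead exploit the discrete harmonic function $h$ on $T$ coming from the lozenge-tiling correspondence of \cite{Kenyon2006, Kenyon2007b}. Since $h$ is harmonic and $X_t$ is a martingale, $h(X_t)$ is itself a martingale. Evaluating this identity along the diffusive scaling $X_{nt}/\sqrt n$, passing to the limit using the Brownian motion from the previous step, and matching the result against the known quadratic leading-order behavior of $h$ at infinity, identifies the coefficients of that asymptotic with entries of $\Sigma$. A judicious choice of one or two such harmonic functions then forces $\Sigma$ to be a scalar multiple of the identity, the $\lambda$- and $v$-independence being an immediate consequence of the stationarity of the environment chain obtained above.
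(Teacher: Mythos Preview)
Your overall architecture matches the paper's---environment from the particle, Birkhoff, martingale FCLT, then a harmonic-function argument for isotropy---but there are two substantive gaps.

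First, the assertion that normalised Lebesgue measure on $S^1$ is invariant for the environment-from-the-walker chain is unjustified, and this is in fact the crux of the argument. For balanced walks in random environment the a priori environment law $\bbP$ is typically \emph{not} invariant for the particle's view; what one needs is a separate measure $\bbQ$ that is $p_t$-invariant and absolutely continuous with respect to $\bbP$. The paper obtains $\bbQ$ by approximating the quasi-periodic T-graph by genuinely periodic ones (triangles $\Delta_n$ with rational angles), taking the finite-state stationary measures $\bbQ_n$ there, proving a uniform $L^2(\bbP_n)$ bound on the densities $d\bbQ_n/d\bbP_n$ via a maximum-principle/Aleksandrov-type estimate (Lemmas \ref{lemme:borne_linf_dirichlet} and \ref{lemme:borne_linf_periodique}), and then extracting a subsequential limit by compactness of $\cT^\bullet_\Delta/d$ (Lemma \ref{lemme:def_Q}). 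Only once $\bbQ$ is in hand does one prove $\bbQ\sim\bbP$ (this uses the accessibility Lemma \ref{lemme:accessibilite}) and then the time-shift ergodicity you outline. You correctly flag the group-structure issue of Remark \ref{rq:ergodicite}, but that turns out to be a minor point compared with the construction of $\bbQ$ itself, which your proposal skips.

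Second, the harmonic function supplied by the dimer correspondence is not of quadratic growth. It is a discrete analogue of $\tfrac{1}{2\pi}\bigl(\arg_d(z-w)+c\log|z-w|\bigr)$, built as a primitive of $K_T^{-1}$ and carrying a branch cut (the paper's $G^*_{wd}$). The identification of $\Sigma$ proceeds by writing $\bbE_v[G^*_{wd}(X_{\tau_n})]=G^*_{wd}(v)$ for a suitable exit time $\tau_n$, inserting the asymptotic expansion of $G^*_{wd}$, passing to the Brownian limit, and computing the resulting Gaussian integral of $\log|\cdot|$: after a second-order expansion one finds a term proportional to $M_{11}-M_{22}$ (in coordinates diagonalising $\Sigma$), which is forced to vanish. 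A ``quadratic leading-order'' matching would require a different discrete harmonic function that is not available here.
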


For the initial problem of approximation of continuous harmonic function by discrete harmonic ones, as we said in the introduction, because of the lack of speed of convergence, we do not get precise estimates but we do get convergence, for example:
\begin{corollary}[Dirichlet problem]
  Let $U$ denote a smooth open domain in $\bbR^2$ and let $f$ be an
  harmonic function on $U$ that extends continuously on the boundary
  $\partial U$. Let $T$ a non-degenerate T-graph and let $T_n$ be
  $T$ rescaled by $1/n$ (its edges are $O(1/n)$). Let $U_n = U
  \cap T_n$ and let $\partial U_n$ denote the set of vertices adjacent
  to $U_n$ and not in $U_n$. Consider $f_n$ the solution to the
  Dirichlet problem
\begin{itemize}
  \item $f_n$ is discrete harmonic in $U_n$;
  \item $f_n =f$ on $\partial U_n$ (for points outside of $U$ take the value of the nearest point in $U$).
\end{itemize}
The sequence $f_n$ converges pointwise towards $f$ as $n$ goes to infinity.
\end{corollary}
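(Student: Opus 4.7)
The plan is to combine the standard probabilistic representation of the Dirichlet problem with the invariance principle from Theorem \ref{thm:principal}. I would first write the discrete harmonic solution as
\[
f_n(v) \;=\; \bbE_v\bigl[f\bigl(X^{(n)}_{\tau_n}\bigr)\bigr],
\]
where $X^{(n)}$ denotes the random walk on $T_n$ started at $v$ and $\tau_n$ its first hitting time of $\partial U_n$, with $f$ on $\partial U_n$ understood via the nearest-point extension. The vertices of $\partial U_n$ lie within $O(1/n)$ of $\partial U$, so uniform continuity of $f$ on $\bar U$ implies that this extension differs from $f|_{\partial U}$ by $o(1)$ uniformly; this discrepancy is absorbed at the end.

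By Theorem \ref{thm:principal}, after the standard diffusive rescaling (which does not affect the exit distribution from $U$), $X^{(n)}$ converges in law on the Skorokhod path space to a Brownian motion $B$ with covariance $\sigma^{2}\mathrm{Id}$ for some $\sigma > 0$ depending only on $\Delta$. Since such a $B$ has the same exit distribution from $U$ as standard Brownian motion (up to a deterministic time change), the solution of the continuous Dirichlet problem satisfies $f(v) = \bbE_v[f(B_\tau)]$, where $\tau$ is the exit time of $B$ from $U$. Using Skorokhod's representation theorem I would realise $X^{(n)}$ and $B$ on a common probability space with $X^{(n)} \to B$ almost surely, uniformly on compact time intervals.

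The main step is then to pass to the limit inside the expectation. Smoothness of $\partial U$ makes every boundary point regular for Brownian motion; hence almost surely $\tau < \infty$ and the map $\omega \mapsto (\tau(\omega),\, \omega_{\tau(\omega)})$ is continuous at almost every Brownian trajectory. Combined with the uniform convergence $X^{(n)} \to B$ on compacts and the $O(1/n)$-closeness of $\partial U_n$ to $\partial U$, one obtains $\tau_n \to \tau$ and $X^{(n)}_{\tau_n} \to B_\tau$ almost surely. Continuity and boundedness of $f$ on $\bar U$ then conclude via dominated convergence, yielding $f_n(v) \to f(v)$.

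The hard part is the joint convergence $(\tau_n, X^{(n)}_{\tau_n}) \to (\tau, B_\tau)$: one must ensure tightness of $\tau_n$ (i.e.\ that $X^{(n)}$ really leaves $U_n$ in bounded time with high probability) and rule out spurious exits far from the Brownian exit location. Tightness reduces to comparing $X^{(n)}$ with Brownian motion inside a slightly enlarged smooth domain via Theorem \ref{thm:principal}, while the absence of spurious exits follows from regularity of $\partial U$, which prevents $B$ from grazing the boundary before $\tau$. Both of these inputs rest ultimately on the invariance principle already proved.
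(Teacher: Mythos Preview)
Your proposal is correct and follows essentially the same approach as the paper: represent $f_n$ and $f$ probabilistically via exit distributions, invoke the invariance principle of Theorem~\ref{thm:principal}, and pass to the limit using that the exit time is almost surely a continuity point of the Brownian trajectory (which the paper states in one line, while you spell out the Skorokhod coupling and the tightness/regularity issues more carefully). The paper's proof is terser but the underlying argument is the same.
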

\begin{proof}
  Let $v_n$ a sequence of vertices of $U_n$ such that $v_n \rightarrow
  v_\infty$, a point in $U$. Let $X^{(n)}_t$ denote the random walk
  started in $v_n$ and let $B_t$ denote the brownian motion started in
  $v_\infty$, with the asymptotic covariance. Let $\tau_n$ denote the
  first exist time of $X^{(n)}$ from $U_n$ and let $\tau_\infty$
  denote the first exit time of $B$. We have by harmonicity $f_n(v_n)
  = \bbE[ f(X_{\tau_n}^{(n)})]$ and
  $f(v_\infty)=\bbE[f(B_{\tau_\infty})]$. Furthermore the trajectory $(B_t)_{t\ge0}$ is almost surely a continuity point of $\tau_\infty$ seen as a function of the trajectory, so convergence of $X^{(n)}$ to $B$ implies convergence in law of $X^{(n)}_{\tau_n}$ to $B_{\tau_\infty}$ and thus of $f_n(v_n)$ to $f(v_\infty)$.
\end{proof}

\begin{remark}
  In \cite{Sznitman2002}, the uniform ellipticity of the walk is an
  important part of the proof of the CLT.  Yet for the random walk
  $X_t$, the projection 
of the increment
in the direction orthogonal to the segment
  containing the current position of the walk, is 
zero.
  In this sense, the walk lacks uniform ellipticity everywhere.
  However, if one looks at the position of the walk after some finite
  time (say $1$), ellipticity is recovered. More precisely (see
  Proposition \ref{prop:ellipticite_uniforme}), the 
increments of the
discrete time
  random walk $ (X_n)_{n \in \bbN}$ have
strictly positive conditional variance in any direction, uniformly in
the current position. For this reason we will often look at the position of the random walk at integers times in the rest of the proof.
\end{remark}

\begin{definition}\label{def:chemin_oriente}
  An \emph{oriented path} on $T$ is a sequence $\psi(v_1), \psi(v_2),
  \ldots$ with $v_i \in \cH^*$ such that, for all $i$, $\psi(v_i)$ is in the interior of a non degenerate segment and $\psi(v_{i+1})$
  is one of the endpoints of this segment
  (using point   (\ref{prop:prop_geometrique:non_intersection}) of Proposition  \ref{prop:prop_geometrique}, $\psi(v_i)$ is in the
  interior of a unique segment).
\end{definition}

\begin{proposition}\label{prop:ellipticite_uniforme}
  There exists $\epsilon > 0$ (depending continuously on $\Delta$) such that, for any ${\bf n}\in\mathbb S^1$ and any $x_0\in T$, writing $(X_t)_{t \geq 0}$ the random walk started at $x_0$,
  \[
   1/\epsilon> \text{Var} (X_1 \cdot{\bf n}) > \epsilon
  \]
\end{proposition}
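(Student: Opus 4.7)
My plan is to use Dynkin's formula for pure-jump martingales, which for this walk yields
\[
\mathrm{Var}(X_1\cdot\mathbf n) \;=\; \mathbb{E}\!\left[\int_0^1 \ell(S_{X_t})\,(\vec d_{X_t}\cdot\mathbf n)^2\, dt\right],
\]
where $S_v$ is the segment $v$ is interior to, $\ell(S_v)=\|v^+-v\|+\|v^--v\|$ its length, and $\vec d_v$ its unit direction: each of the two possible jumps from $v$ contributes $\|v^\pm-v\|^2\cdot(1/\|v^\pm-v\|)=\|v^\pm-v\|$ to the infinitesimal quadratic variation in the direction $\vec d_v$. By Proposition \ref{prop:prop_geometrique}(\ref{prop:prop_geometrique:face_noire}) each segment is a translate of $-\lambda g(b)\,\Re(\bar\lambda\,\bar g(b)\,\Delta)$, so segments have length uniformly bounded above by $\mathrm{diam}(\Delta)=:\ell_{\max}$, yielding the upper bound $\mathrm{Var}(X_1\cdot\mathbf n)\leq\ell_{\max}$ immediately.

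For the lower bound I combine $\ell(S)\geq\ell_{\min}>0$ (Proposition \ref{prop:prop_geometrique}(\ref{prop:prop_geometrique:non_degenere})) with a bound on $\mathbb{E}[\int_0^1(\vec d_{X_t}\cdot\mathbf n)^2\, dt]$. The geometric input is this: writing $v_0=x_0$ and $v_0^\pm$ for the two endpoints of $S_{v_0}$, the three directions $\vec d_{v_0},\vec d_{v_0^+},\vec d_{v_0^-}$ correspond to three distinct black faces $b_0,b_+,b_-$ of $\cH^*$ (one interior face around each of $v_0,v_0^+,v_0^-$) whose coordinates differ by a bounded amount. Using $g(b(m,n))=\alpha(\beta/\gamma)^m(\beta/\alpha)^n$, the pairwise angular differences are nontrivial integer combinations of $\arg(\beta/\gamma)$ and $\arg(\beta/\alpha)$ modulo $\pi$, both non-zero modulo $\pi$ since no two sides of $\Delta$ are parallel. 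This yields $\theta_0>0$ (depending only on $\Delta$) such that the three directions are pairwise at angular distance at least $\theta_0\bmod\pi$. Three angles pairwise at distance $\geq\theta_0$ cannot all lie within $\theta_0/2$ of any common target, so at least one of the three satisfies $(\vec d\cdot\mathbf n)^2\geq c_0:=\sin^2(\theta_0/2)$ for any unit $\mathbf n$.

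It remains to show that the walk spends a time bounded below in expectation on a vertex with a favorable direction. The easy case is $x_0$ well inside its segment, $\|v_0^\pm-x_0\|\geq\ell_{\min}/2$: the pre-first-jump waiting time has expectation bounded below, and the appropriate one-jump scenario picks up the contribution from the favorable direction at $v_0$, $v_0^+$, or $v_0^-$ produced by the geometric fact above, with non-vanishing probability of the first jump going to the relevant endpoint and a subsequent non-vanishing waiting time before the next jump. The main obstacle is starting points near the endpoint of a nearly-degenerate segment, where the first-jump rate diverges. Here I invoke Proposition \ref{prop:presque_degenere}: nearly-degenerate features cluster in a three-segment pattern around a nearly-degenerate face, and crucially each full segment still has length $\geq\ell_{\min}$. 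At each vertex inside such a cluster, the probability per jump of travelling along a full segment to a far endpoint outside the cluster is of order $\delta/\ell_{\min}$ (with $\delta$ the short-sub-segment scale), while the walk performs $\Omega(1/\delta)$ jumps in unit time; a standard geometric-escape estimate then gives a uniformly positive total probability of making at least one full-length jump by time $1$. Since the direction of such a jump must be one of the three cluster directions, among which at least one is favorable, this produces a displacement of order $\ell_{\min}$ with favorable projection onto $\mathbf n$ and bounded-below probability, yielding the needed lower bound. The hard step is precisely this last estimate, the careful bookkeeping of the walk inside clusters of nearly-degenerate features.
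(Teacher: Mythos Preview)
Your approach is essentially the paper's: the same two-case analysis (segment nearly orthogonal to $\mathbf n$ versus nearly degenerate), the same geometric input that adjacent segments meet at an angle of $\Delta$, and the same escape-from-a-tiny-face argument in the degenerate case. Your Dynkin-formula framing is a genuine improvement for the upper bound --- the paper argues via exponential tails of $X_1-X_0$, whereas your integral representation gives $\mathrm{Var}(X_1\cdot\mathbf n)\le\ell_{\max}$ in one line.

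Two small caveats. First, your claim that $\vec d_{v_0},\vec d_{v_0^+},\vec d_{v_0^-}$ are \emph{pairwise} separated by $\theta_0$ is slightly too strong: the angular difference between $\vec d_{v_0^+}$ and $\vec d_{v_0^-}$ is a combination of $\arg(\beta/\gamma),\arg(\beta/\alpha)$ that need not be bounded away from $0\bmod\pi$ uniformly in $\Delta$. What is true (and is what the paper uses) is that $\vec d_{v_0}$ makes an angle of $\Delta$ with each of $\vec d_{v_0^\pm}$, so at least one of $\vec d_{v_0},\vec d_{v_0^+}$ is favorable --- this is all you need. Second, in your hard case you assert that the long jump occurs in the favorable direction with bounded-below probability; this needs the (easy, and stated in the paper) observation that the fast cycle inside the tiny face visits each of its three vertices a comparable fraction of the time, so each of the three long-jump directions is selected with comparable probability.
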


\begin{wrapfigure}{R}{0.5\textwidth}
\begin{center}
  \includegraphics[width=0.45\textwidth]{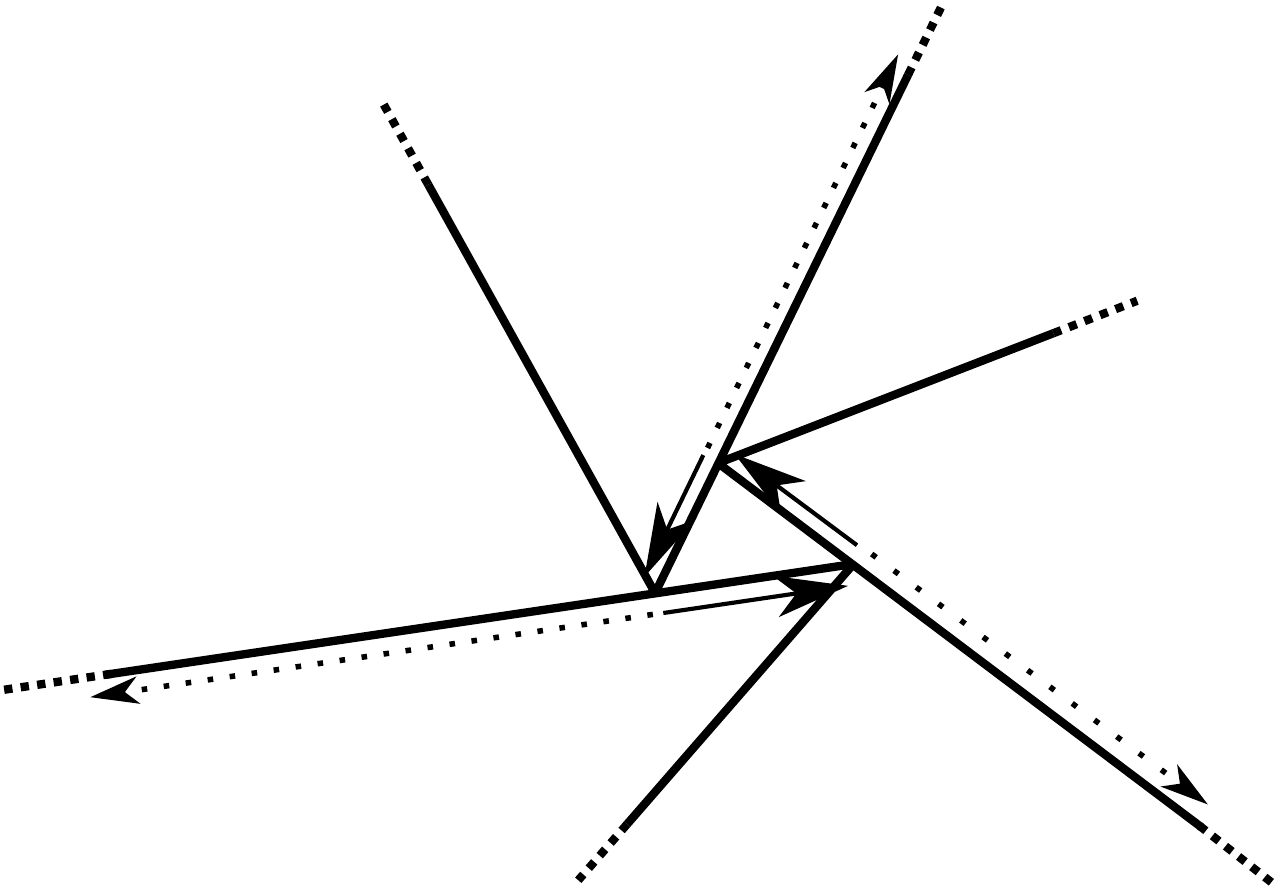}
\captionsetup{width=0.45\textwidth}
\caption{
The neighborhood of an $\epsilon$ almost degenerate face. The arrows give the possible transition of the random walk: 
the arrows with full line have high rate of order $1/\epsilon$ while the
 dotted arrows have rates of order one.}
\label{fig:presque_degenere}
\end{center}
\vspace{-0.5cm}
\end{wrapfigure} 

\noindent \emph{Proof.} 
  Since the length of segments is lower bounded (proposition
  \ref{prop:prop_geometrique} point
  (\ref{prop:prop_geometrique:face_noire})), there is a uniform lower
  bound on the probability to see at least $k$ jumps in the interval
  $(0,1)$.

Also because the length of segments is bounded, there are only two cases where the variance after a single jump can be lower than $\epsilon$: either the current position is in a segment whose direction is very close to ${\bf n}^\bot$ (so that changes of $x \cdot \bf n$ are small), or the current position is in an almost degenerate segment (so that change of $x \cdot \bf n$ are almost deterministic and small).

In the second case of an almost degenerate segment, the random walk can make either a very
small step (with high rate) or a large one (with rate of order
one). However making a small jumps brings the walk to another vertex
of an almost degenerate face $F$ where the situation is the same. Actually the
random walk is trapped inside the vertices of $F$ as long as it does
only small jumps (see figure \ref{fig:presque_degenere}). Since for
each of this vertices a large jump can occur with rate of order one, a
large jump will occur before time $1$ with probability bounded away
from zero. Finally this large jump can happen in either of the three
segments (to which  the edges of $F$ belong) with  probabilities  bounded
away from zero since the
distribution of the time spent in each of the vertices of $F$ depends
only on the ratio of the edge lengths, which are fixed. At least two of
these segments have direction far from ${\bf n}^\bot$ so jumps along
them contribute a finite amount to the variance.

In the first case of a segment with ``bad'' direction, it is clear from the construction
that the angles between neighboring segments are given by the angles
of $\Delta$. Thus neighboring segments have direction far away from
${\bf n}^\bot$. Events with two jumps in the interval $(0,1)$ have
finite probability and we just showed that they correspond to large
change of $X \cdot{\bf n}$ (unless the neighboring segments are
almost degenerate where we are back to the above case).


The upper bound is essentially trivial. Small jumps only occur inside the ``traps'' made by almost degenerate faces so they have a small effect on the position of $X_1$. As for large jumps, their rate is bounded so the probability to make a large number of large jumps is exponentially decreasing. Overall the
increment $X_1-X_0$ has exponential tails and thus a bounded variance.
\qed

\begin{proposition}\label{prop:construction_chemin_oriente}
  Let $T$ be a non-degenerate graph. For any ${\bf n} \in \mathbb S^1$
  and any vertex $x_0$, there exist two infinite oriented path $P^+$
  and $P^-$ starting in $x_0$ such that the couple $({\bf n}\cdot x,
  {\bf n}^\bot\cdot x )$ is increasing (resp. decreasing) along $P^+$
  (resp. $P^-$) for the lexicographic order (i.e. at each step either the first coordinate is increasing, or it is constant and the second coordinates is increasing). Furthermore ${\bf n}\cdot
  x$ is increasing (resp. decreasing) at least once every two steps.

  There exists $\epsilon > 0$ such that for any $x_0$ and any ${\bf n}
  \in \mathbb S^1$, there exists infinite oriented paths $\tilde P^+ =
  (x^+_k)_k$ (resp. $\tilde P^-=(x^-_k)_k$), such that, for all $k$,
  $(x^+_{k+4} -x^+_k )\cdot {\bf n} > \epsilon$ (resp. 
$(x^-_{k+4} -x^-_k)
  \cdot {\bf n} < -\epsilon$).
\end{proposition}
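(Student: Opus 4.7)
The plan is to build $P^+$ by a greedy lexicographic ascent: starting from $x_0$, given the current vertex $x_k$, let $S_k$ be the unique segment containing $x_k$ in its interior (uniqueness from Proposition~\ref{prop:prop_geometrique}(\ref{prop:prop_geometrique:endpoints})), and let $x_{k+1}$ be the endpoint of $S_k$ that is larger in the lexicographic order on the pair $({\bf n}\cdot x,\,{\bf n}^\perp\cdot x)$. Since the two endpoints of a segment are distinct points of the plane, they differ in at least one of the two coordinates, so the lex order strictly increases at each step and the path is infinite without repeats. The path $P^-$ is defined symmetrically, taking the lex-minimal endpoint at each step.

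For the ``${\bf n}\cdot x$ strictly increases at least once every two steps'' part, the key observation is that at any vertex $\psi(v)$ the three segments meeting at $\psi(v)$ have three pairwise distinct directions in $\bbR P^1$: by Proposition~\ref{prop:prop_geometrique}(\ref{prop:prop_geometrique:face_noire}) they are, up to a common rotation by $\lambda$, directions associated to the three sides of $\Delta$, which are distinct for any genuine triangle. Applied at $x_{k+1}$, the segments $S_k$ (of which $x_{k+1}$ is an endpoint) and $S_{k+1}$ (of which $x_{k+1}$ is an interior point, per Proposition~\ref{prop:prop_geometrique}(\ref{prop:prop_geometrique:endpoints})) are two of these three distinct directions, so at most one of them can be orthogonal to ${\bf n}$. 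Whichever is not orthogonal has endpoints with distinct ${\bf n}\cdot$-values, so the corresponding greedy step strictly increases ${\bf n}\cdot x$.

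For the uniform lower bound in the second statement, the greedy path $P^+$ may itself fail, because near almost-degenerate configurations the interior point $x_k$ of $S_k$ can lie within $O(\epsilon)$ of the chosen up-endpoint and the single-step ${\bf n}\cdot$-gain can vanish. The plan is to refine the construction by allowing small detours near such configurations. Two ingredients would combine. First, the three directions at each vertex differ by fixed angles in $(0,\pi)$ determined by $\Delta$, so there is a constant $c_0>0$ depending only on $\Delta$ such that at every vertex at least two of the three incident directions satisfy $|\sin\theta|\geq c_0$, with $\theta$ the angle to ${\bf n}^\perp$. Second, Proposition~\ref{prop:presque_degenere} packs $\epsilon$-almost-degenerate segments and faces into bounded triangular traps of diameter $O(\epsilon)$; at each of the three corners of such a trap, the third (non-trap) incident segment has full length at least $L_{\min}-O(\epsilon)$, where $L_{\min}>0$ is the uniform lower bound on segment lengths from Proposition~\ref{prop:prop_geometrique}(\ref{prop:prop_geometrique:non_degenere}).

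The hard part will be verifying that a window of exactly four steps suffices. In the worst case, the greedy walk enters a trap through a short sub-segment and can cycle between its three corners before escaping. A careful case analysis, using that the three escape directions at the three corners of a trap exhaust the three direction classes so at least two of them are ``good'' (i.e.\ make an angle at least $c_0$ with ${\bf n}^\perp$), shows that the walk can be steered onto a good escape within three steps; the corresponding jump produces an ${\bf n}\cdot$-gain of at least $c_0 L_{\min}/2$. Combined with the monotonicity from the first part, this yields a uniform lower bound $\epsilon>0$ depending only on $\Delta$ on ${\bf n}\cdot(x_{k+4}-x_k)$. The path $\tilde P^-$ is obtained by running the same construction with $-{\bf n}$ in place of ${\bf n}$.
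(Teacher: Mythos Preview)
Your argument follows essentially the same route as the paper: a greedy lexicographic choice for $P^\pm$, together with the fact that consecutive segments meet at an angle of $\Delta$ (so they cannot both be orthogonal to ${\bf n}$); and for $\tilde P^\pm$, the trap analysis near almost-degenerate faces, exactly as the paper does via its reference to the proof of Proposition~\ref{prop:ellipticite_uniforme}.

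One point deserves to be made explicit (the paper glosses over it as well). Knowing that two of the three escape \emph{direction lines} are far from ${\bf n}^\perp$ only bounds $|{\bf n}\cdot u|$ from below, not ${\bf n}\cdot u$ itself, and from a trap vertex the large step is forced toward the \emph{far} endpoint, so you cannot choose the sign. What makes the case analysis go through is that the three escape unit vectors, weighted by the side lengths of the small face $F$, sum to zero (they are the negatives of the oriented edge-vectors of $F$); together with the triangle inequality on the side lengths of $\Delta$, this forces at least one of the two ``good'' escapes to have ${\bf n}$-component bounded below by a positive constant depending only on $\Delta$. Also, your appeal to ``monotonicity from the first part'' is not quite apt since $\tilde P^+$ need not coincide with $P^+$; the correct reason the remaining steps in a four-step window cannot cancel the good escape is simply that each of them is either a short trap step of size $O(\epsilon)$ or a near-${\bf n}^\perp$ step with small ${\bf n}$-projection.
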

\begin{proof}
  By symmetry we will only construct the path $P^+$ and $\tilde P^+$. For the first one, by construction $x_0$ is in the interior of a segment $S_0$ and the point $x_1$ has to be one of the endpoints of $S$. If $S$ is not orthogonal to ${\bf n}$ then moving to one of its endpoints increases ${\bf n}\cdot x$. If $S$ is orthogonal to ${\bf n}$, we can keep ${\bf n}\cdot x$ constant and increase ${\bf n}^\bot \cdot x$.
  Finally by construction the angles between neighboring segments of $T$ are the angles of $\Delta$ so they are never aligned and two neighboring segments cannot be both orthogonal to ${\bf n}$.

  To construct $\tilde P^+$ we recall the proof of proposition
  \ref{prop:ellipticite_uniforme}. The only case where we cannot
  increase $x \cdot {\bf n}$ by $\epsilon$ in one step are almost
  degenerate segments and segments with direction close to ${\bf
    n}^\bot$. In the latter case, one step is enough to arrive to a
  ``good'' segment or an almost degenerate one. In the former case, we
  see that after at most three small steps we can find a segment where
  we can increase $x \cdot {\bf n}$ by a bounded amount. Overall after
  $4$ steps we can increase $x \cdot {\bf n}$ by $\epsilon$.
\end{proof}

\begin{remark}
  For generic $\Delta$ there is no need to distinguish the paths $P$ and $\tilde P$, the only problem comes from triangles with one right angle and two irrational ones.
\end{remark}

\section{Central limit theorem}\label{sec:TCL}

Here we give the proof of the central limit theorem for the oriented
random walk on the $T$-graph. The identification of the limiting
covariance will be achieved in Section \ref{sec:covariance} with a
completely different set of ideas: it follows from the knowledge of a
specific harmonic function on the graph, which comes from the study of
dimer models \cite{Kenyon2007}.

The proof of the CLT follows quite closely a proof in the case of
random walk in balanced random environment \cite{Lawler1982,Sznitman2002}. It is at first intriguing that we
can use arguments from random walk on random environment in our
  environment that, 
  once the single parameter $\lambda $ is fixed, is deterministic and
  quasi-periodic. However the specific structure of T-graphs will
  allow us to use an ergodicity and a compactness argument that are
  the core of the proof for a random environment.  One can also argue
  that we lose the deterministic nature of the graph in the theorem
  since it applies only to generic $\lambda$ and we have no explicit
  condition on $\lambda$.

\subsection{Periodic case}\label{sec:periodique}

If $\beta/\gamma$ and $\beta/\alpha$ are both roots of the unity then
$\phi^*$ is periodic and thus $T$ is a periodic graph. Even though
that case could be dealt with using the same techniques as the general
one, 
the result can be proved with much simpler arguments.

Let $T$ be a periodic $T$ graph and let $T_1$ be one fundamental
domain of $T$. Given the first point, there is a bijection between
random walk trajectories on $T$ and $T_1$.

The random walk on $T_1$ is a finite state Markov chain and it is not
difficult using proposition \ref{prop:construction_chemin_oriente} to
show that it has only one recurrent set. Let us assume for simplicity
that the starting point $x_0$ is recurrent. Let $\tau_0=0,\tau_1,
\ldots$ be the return times in $x_0$ of the random walk on $T_1$;
thanks to exponential mixing the $\tau_{i+1}-\tau_i$ are iid and have
exponential moments. Going back to the random walk on $T$, the
$X_{\tau_{i+1}}-X_{\tau_i}$ are also iid with exponential moments and
the central limit theorem for random walk gives the result.

\subsection{Topology on graphs}\label{sec:topologie}

The proof of the central limit theorem consists of two crucial steps. First we construct, using a compactness argument, an invariant measure for the random walk (more precisely for the environment from the point of view of the particle, see notation \ref{not:def_point_de_vue_particule} and lemma \ref{lemme:def_Q} for precise statements) and then we apply Birkhoff ergodic theorem to get convergence of the variance of the random walk. 

In this section we set up the compactness argument by defining a distance on T-graphs and giving the properties of the induced topology we will need.

\begin{definition}
  A pointed $T$-graph is a couple $T^\bullet=(T, v)$ where $T= T_{\lambda\Delta}$ for a certain $\Delta$ and $\lambda$ is a $T$-graph and $v$ is a vertex of $T$. We let $\cT^\bullet$ denote the set of pointed $T$-graphs.
\end{definition}

\begin{notation}
  We will need an explicit correspondence between black vertices of $\cH$
  and vertices of $T$. Recall that the image of a black face $b$ of
  $\cH^*$ is a segment where the image of the three vertices of the
  face are the two endpoints and a third  point on the segment (proposition \ref{prop:prop_geometrique} point (\ref{prop:prop_geometrique:endpoints}) ). We define $v(b)$ as
  this third point. Generically $v(b)$ is in the interior of the
  segment but for exceptional values of $\lambda$,  when the $\psi$-images of
  two vertices of $\cH^*$ are equal,  $v(b)$ can be one of the endpoints. Except in the non generic graph
  where this happens, $v$ is a bijection and we define coordinates on
  vertices of $T$ by using the coordinates of $v^{-1}$. We write
  $v(m,n)=v(b(m,n))$ to simplify notations.
\end{notation}

\begin{notation}
  Let $\cP$ the set of parameters $\cP= \{ (\lambda, \Delta) | \lambda
  \in \bbS^1, \Delta \text{ area one triangle}\}$. We define the mapping $\cC : \cP \rightarrow \cT^\bullet$ by
\[
  \cC(\Delta, \lambda) = (T_{\lambda\Delta}, v(0,0)) ).
\]
\end{notation}


\begin{notation}
  Let $d_H$ denote the Hausdorff distance on closed set of $\bbC$. We define $d$ on $(\cT^\bullet)^2$ by 
\[
  d( (T,v), (T', v')  ) = \inf \{ \epsilon | d_H( t_{-v} T \cup \bar B(\tfrac{1}{\epsilon}), t_{-v'} T' \cup \bar B(\tfrac{1}{\epsilon})  ) < \epsilon \}
\]
where $t_x$ denotes the translation by $x$ and $\bar B(r)$ is the closed ball of center $0$ and radius $r$.
\end{notation}

\begin{proposition}
  $d$ is a pseudo-distance on $\cT^\bullet$.
\end{proposition}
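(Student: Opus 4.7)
The plan is to verify the three defining properties of a pseudo-distance on $\cT^\bullet$: non-negativity with vanishing on the diagonal, symmetry, and the triangle inequality. The first two are immediate. The infimum is taken over positive reals, so $d \geq 0$. If $(T,v) = (T',v')$, then for every $\epsilon > 0$ the two sets inside $d_H$ coincide, so $d_H = 0 < \epsilon$ and the infimum vanishes. Symmetry of $d$ is inherited from symmetry of $d_H$ in its arguments, since the set of admissible $\epsilon$ over which the infimum is taken is unchanged when one swaps the two pointed graphs.

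The triangle inequality is the substantive step. Write $A_i = t_{-v_i} T_i$ for $i=1,2,3$, and let $D(\epsilon; X, Y)$ denote the Hausdorff distance appearing inside the definition of $d$. Given $d((T_1,v_1),(T_2,v_2)) < \alpha$ and $d((T_2,v_2),(T_3,v_3)) < \beta$, I would fix witnesses $\epsilon_1 < \alpha$ and $\epsilon_2 < \beta$ with $D(\epsilon_i; A_i, A_{i+1}) < \epsilon_i$ for $i = 1, 2$, and then work at the common scale $\epsilon := \epsilon_1 + \epsilon_2$. A short monotonicity lemma transfers the two individual bounds to this common scale: since $\epsilon \geq \epsilon_i$, the lemma yields $D(\epsilon; A_i, A_{i+1}) \leq D(\epsilon_i; A_i, A_{i+1}) < \epsilon_i$. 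Applying the triangle inequality of the Hausdorff distance at the common scale then gives
\[
D(\epsilon; A_1, A_3) \leq D(\epsilon; A_1, A_2) + D(\epsilon; A_2, A_3) < \epsilon_1 + \epsilon_2 = \epsilon,
\]
so that $d((T_1,v_1),(T_3,v_3)) \leq \epsilon_1 + \epsilon_2 < \alpha + \beta$; letting $\alpha$ and $\beta$ shrink to the respective pseudo-distances completes the argument.

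The key technical input, and the only step requiring some care, is the monotonicity lemma itself: it asserts that the function $\epsilon \mapsto D(\epsilon; X, Y)$ is monotone in the right direction, namely that enlarging $\epsilon$ (and hence shrinking the ball $\bar B(1/\epsilon)$) can only decrease $D$. This reduces to a direct case analysis on whether a point in one of the two sets lies in the graph portion or in the ball portion, showing that a nearby point of the other set exists with the required bound. Once the monotonicity lemma is in place, the rest of the proof is formal bookkeeping through the triangle inequality of $d_H$.
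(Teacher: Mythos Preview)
The paper gives no proof of this proposition, so there is nothing to compare against; your outline is the natural approach, and the reduction of the triangle inequality to a monotonicity statement plus the triangle inequality for $d_H$ is the right structure.

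There is, however, a genuine issue with the monotonicity lemma as you state it. Taken literally, the paper defines $\bar B(r)$ as the closed ball of radius $r$, and with that reading your claim that ``shrinking the ball can only decrease $D$'' is \emph{false}. A two--point counterexample already shows this: let $X=\{p\}$ and $Y=\{-p\}$ for some $p\ne 0$. With the ball of radius $|p|$ both points are swallowed, so $D=d_H(\bar B(|p|),\bar B(|p|))=0$; with any strictly smaller ball the point $p$ sticks out and its distance to $Y\cup \bar B$ is positive, so $D>0$. Hence enlarging $\epsilon$ (shrinking the ball) has \emph{increased} $D$. The same phenomenon can occur for T-graphs: the ball may cover a region where the two graphs differ, and shrinking it exposes that difference. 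So the ``direct case analysis'' you allude to cannot succeed as written.

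What is almost certainly going on is a typo in the paper: the remark immediately after the definition says $d$ measures similarity of the graphs \emph{near the pointed vertex} and compares it to the Benjamini--Schramm local topology. That only makes sense if one takes the union with the \emph{exterior} $\{|z|\geq 1/\epsilon\}$ rather than the closed ball, so that everything outside a large window is ``filled in'' and only the local picture matters. Under that reading your monotonicity lemma is correct, and your case analysis proves it: if $|p|\geq 1/\epsilon$ the point already lies in the common exterior, and if $|p|<1/\epsilon<1/\epsilon_i$ then the witness $q$ provided at scale $\epsilon_i$ lies either in $Y$ or in $\{|z|\geq 1/\epsilon_i\}\subset\{|z|\geq 1/\epsilon\}$. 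Your triangle-inequality argument then goes through verbatim. You should flag this reading explicitly rather than leave the monotonicity step as a bare assertion.
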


\begin{remark}
  The distance is chosen in order to measure how similar are the neighborhoods of the pointed vertex in different graphs. It is more or less the local distance of Benjamini-Schramm.
\end{remark}

\begin{notation}\label{not:def_point_de_vue_particule}
  We let $\cT^\bullet /d$ denote the quotient of $\cT^\bullet$ by $d$. We extend trivially functions into $\cT^\bullet$ as function into $\cT^\bullet /d$ without changing notation.
\end{notation}

We can now restate Proposition \ref{prop:translation}:
\begin{proposition}\label{prop:translation2}
  Let $\tau_{m,n}$ the translation defined by $\tau_{m,n} ( T, v(m',n') ) = (T, v(m+m',n+n'))$. We have 
\[ 
d\left( \tau_{m,n} \circ \cC(\Delta, \lambda) , \cC( \Delta, (\tfrac{\beta}{\gamma})^m (\tfrac{\beta}{\alpha})^n \lambda) \right) =0,
\]
which means that in the set $\cT^\bullet /d$ a translation is a special case of changing $\lambda$.
\end{proposition}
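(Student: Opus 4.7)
The plan is to derive this as a direct consequence of Proposition \ref{prop:translation}. Two pointed graphs $(T,v),(T',v')$ satisfy $d((T,v),(T',v'))=0$ exactly when the translated sets $T-v$ and $T'-v'$ coincide as subsets of $\bbC$ (the Hausdorff distance being zero inside every bounded ball), so the goal reduces to the set identity
\[
T_{\lambda\Delta} - v_{\lambda\Delta}(m,n) \;=\; T_{\lambda'\Delta} - v_{\lambda'\Delta}(0,0),
\]
with $\lambda' = (\beta/\gamma)^m(\beta/\alpha)^n\lambda$ and $v_{\lambda\Delta}(m,n)$ denoting the position in $\bbC$ of the vertex of coordinates $(m,n)$ inside $T_{\lambda\Delta}$.

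The first step is to apply Proposition \ref{prop:translation} to the dual vertex $u_{(m,n)}\in\cH^*$ of coordinates $(m,n)$. This directly yields $T_{\lambda\Delta}-\psi_{\lambda\Delta}(u_{(m,n)}) = T_{\lambda'\Delta}$, or equivalently, at the level of the primitives, $\psi_{\lambda'\Delta}(u) = \psi_{\lambda\Delta}(u+(m,n)) - \psi_{\lambda\Delta}(u_{(m,n)})$ for every $u\in\cH^*$, where $u+(m,n)$ denotes the shift in $\cH^*$-coordinates.

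The second step is to identify the marked vertices. Writing $u^*_{m',n'}\in\cH^*$ for the dual vertex whose $\psi$-image is the distinguished interior point $v(b(m',n'))$ on the segment of $b(m',n')$, the rule singling out this vertex uses only the local combinatorial structure around the black face $b(m',n')$ and is therefore translation-equivariant: $u^*_{m,n} = u^*_{0,0} + (m,n)$. Evaluating the identity from the first step at $u=u^*_{0,0}$ gives $v_{\lambda'\Delta}(0,0) = v_{\lambda\Delta}(m,n) - \psi_{\lambda\Delta}(u_{(m,n)})$, and combining with the set identity of the first step produces the required equality.

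The only mildly technical point is the equivariance $u^*_{m,n} = u^*_{0,0}+(m,n)$, i.e., that ``which of the three corners of the black face lands in the interior of the segment'' is the same (up to the periodic shift) across all black faces. This follows from the explicit form of the image of a black face given in Proposition \ref{prop:prop_geometrique}(\ref{prop:prop_geometrique:face_noire}), together with the observation that the relevant quantities $\lambda f(w)$ and $\lambda g(b)$ all transform consistently under the simultaneous change $(m',n')\mapsto(m'+m,n'+n)$ and $\lambda\mapsto\lambda'$, so that the relative arrangement of the three $\psi$-images on the segment is preserved.
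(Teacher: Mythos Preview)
Your proof is correct and matches the paper's approach: the paper presents Proposition~\ref{prop:translation2} explicitly as a restatement of Proposition~\ref{prop:translation} and gives no separate argument, so your derivation from Proposition~\ref{prop:translation} is exactly what is intended. You have simply made explicit the one point the paper leaves implicit, namely that the identification of the marked vertex $v(0,0)$ in $T_{\lambda'\Delta}$ with $v(m,n)$ in $T_{\lambda\Delta}$ follows because the flow $\phi^*$ (and hence the rule selecting the interior point on each segment) is equivariant under the simultaneous shift of coordinates and change of $\lambda$.
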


\begin{proposition}\label{prop:homeomorphisme}
  $\cC$ is continuous and onto from $\cP$ to $\cT^\bullet /d$.
\end{proposition}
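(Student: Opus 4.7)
The plan is to establish surjectivity and continuity separately. For surjectivity, I observe that an arbitrary $(T,v) \in \cT^\bullet$ has, by definition, $T = T_{\lambda\Delta}$ for some $(\lambda, \Delta) \in \cP$, while the vertex $v$ is of the form $v(m,n)$ for some $(m,n) \in \bbZ^2$ (for generic $\lambda$; the non-generic case is handled by density together with the continuity statement). Then $(T, v(m,n)) = \tau_{m,n} \circ \cC(\Delta, \lambda)$, which by Proposition \ref{prop:translation2} coincides in $\cT^\bullet/d$ with $\cC(\Delta, \lambda')$ for $\lambda' = (\beta/\gamma)^m(\beta/\alpha)^n\lambda$, so $\cC$ is onto.

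For continuity, fix $(\Delta_0, \lambda_0) \in \cP$ and $\epsilon > 0$, and set $R = 1/\epsilon$. By the almost-linearity of $\psi$, applied uniformly for $(\lambda, \Delta)$ in a compact neighborhood of $(\lambda_0, \Delta_0)$ (the uniform bound follows from inspecting the geometric sums in the proof of that proposition), there exists $M = M(\epsilon, \Delta_0)$ such that any $u \in \cH^*$ whose $\psi_{\lambda\Delta}$-image lies within distance $R+1$ of $v(0,0)$ must satisfy $|m(u)|, |n(u)| \leq M$. For each such $u$, the explicit formula expressing $\psi_{\lambda\Delta}(u)$ as a finite sum of terms of the form $K(w,b)\,\Re(\bar\lambda \bar f(w))\,\lambda g(b)$ depends continuously on $(\lambda, \Delta)$, and so does the reference point $v(0,0)$. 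Consequently, after translating by $-v(0,0)$, the portion of $T_{\lambda\Delta}$ relevant for the distance $d$ consists of finitely many segments with continuously-varying endpoints, which yields $d(\cC(\Delta, \lambda), \cC(\Delta_0, \lambda_0)) < \epsilon$ for $(\Delta, \lambda)$ sufficiently close to $(\Delta_0, \lambda_0)$.

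The main technical subtlety concerns the truncation boundary: as $(\lambda, \Delta)$ varies, a vertex of $\cH^*$ whose image sits just outside $\bar B(R)$ at the base parameters could drift inside after perturbation, or conversely, producing a discrete jump in the set of segments being compared. The slack between $R$ and $R+1$ used in the choice of $M$ absorbs this issue, since for perturbations small enough the set of $\cH^*$-vertices with image in $\bar B(R)$ cannot change, reducing the remaining continuity to that of finitely many explicit smooth functions of $(\lambda, \Delta)$. The non-generic values of $\lambda$ for the surjectivity step are then absorbed by continuity: the corresponding $\cT^\bullet/d$-classes are limits of generic ones, already seen to lie in the image of $\cC$.
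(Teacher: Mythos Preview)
Your continuity argument is correct and matches the paper's: both reduce to showing that only finitely many $\cH^*$-vertices are relevant in a ball of fixed radius, and that their $\psi$-images depend continuously on $(\lambda,\Delta)$. You use uniform almost-linearity to bound the coordinates, while the paper phrases it as ``adjacent faces cannot both be vanishingly small''; these are equivalent here.

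For surjectivity your treatment of the generic case is the same as the paper's. The non-generic case, however, is handled differently and your argument has a gap. You write that non-generic $\cT^\bullet/d$-classes ``are limits of generic ones, already seen to lie in the image of $\cC$'', but being a limit of points in the image does not by itself place you in the image: you need the image to be closed. This does follow (for fixed $\Delta$, $\cC(\{\Delta\}\times\bbS^1)$ is compact by continuity), but it should be said. More importantly, you have not explained \emph{why} an arbitrary non-generic pointed graph $(T_{\lambda_0\Delta},v)$ is a limit of generic pointed graphs known to be in the image. Continuity of $\cC$ only controls limits of the specific basepoints $v(0,0)$; to approximate an arbitrary vertex $v=\psi_{\lambda_0}(u)$ you must track $\psi_{\lambda_n}(u)$ for generic $\lambda_n\to\lambda_0$, observe that it equals some $v_{\lambda_n}(b_n)$ with $b_n$ one of the three black faces around $u$, pass to a subsequence to fix $b_n=b$, and then invoke continuity of the translated map $(\lambda,\Delta)\mapsto (T_{\lambda\Delta},\psi_{\lambda\Delta}(u))$. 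All of this works, but it is a genuine extra step, not an immediate consequence of the continuity of $\cC$ as stated.

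The paper instead argues directly that even for non-generic $\lambda$ every vertex of $T$ is of the form $v(b)$: if some $\psi(x)$ were a ``simple'' endpoint of all three adjacent segments, a small generic perturbation would keep the three interior points $v(b_i)$ far from $\psi(x)$, contradicting the generic-$\lambda$ fact that $\psi(x)$ must equal one of them. This avoids the closure machinery and makes the translation argument apply uniformly.
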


\noindent\emph{Proof.}
First we prove that $\cC$ is onto. By proposition \ref{prop:translation2}, any pointed graph $(T,v)$ where $v$ is the image of a black vertex is identified in $\cT^\bullet /d$ with a graph pointed in $0$, which is in the image of $\cC$ by construction. Thus we only have to check that any vertex of a $T$-graph is of the form $v(b)$ for a certain black vertex $b$. 

\begin{wrapfigure}{R}{6.5cm}
\begin{center}
  \includegraphics[width=5cm]{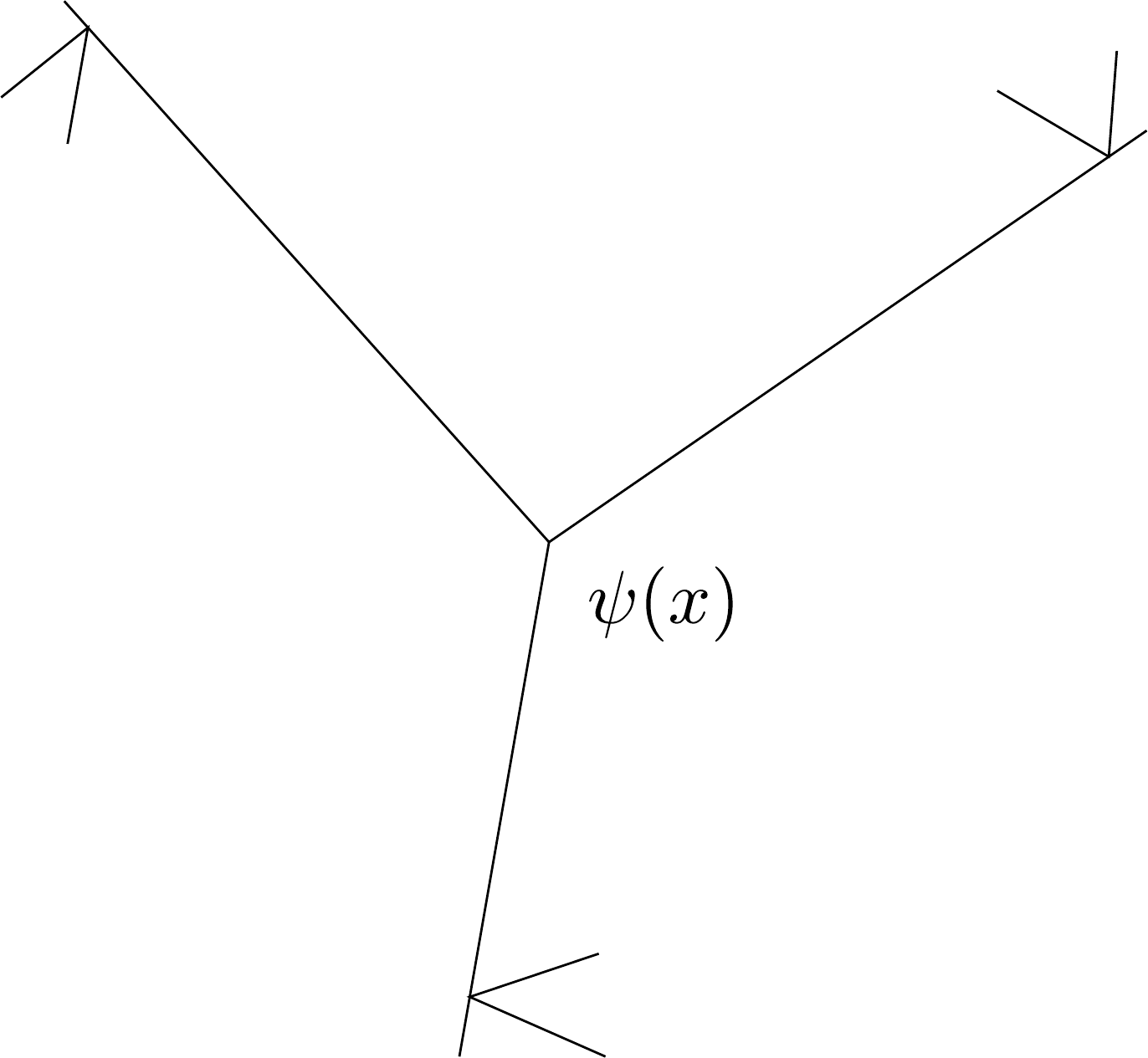}
\captionsetup{width=6cm}
\caption{The local configuration around $\psi(x)$ with a generic $\lambda$ close to $\lambda_0$. This configuration is impossible by proposition \ref{prop:prop_geometrique} point (\ref{prop:prop_geometrique:endpoints}).}
\label{fig:surjectivite}
\end{center}
\vspace{-0.3cm}
\end{wrapfigure} 

Fix a graph $T$, in proposition \ref{prop:prop_geometrique} point (\ref{prop:prop_geometrique:endpoints}) we see that a vertex of $T$ is always an endpoint of (at least) a segment so it is the image of a vertex $u$ of $\cH^*$. However this does \emph{not} imply directly that it is of the form $v(b)$. For generic $\lambda$ each vertex is in the interior of a single segment and thus is trivially of the form $v(b)$. For non generic $\lambda$ some segments will be degenerate. If the image of a certain $b$ presents such a case, then $v(b)$ is by definition the ``double'' endpoint so there is no issue with the definition, yet we have to check that no vertex of $T$ is a ``simple'' endpoint of all the segments it is in. This could be seen from the explicit definition but we give a perturbative argument (see figure \ref{fig:surjectivite} for an illustration). Suppose that for some $\lambda_0$ and some $x \in \cH^*$, $\psi(x)$ is a simple endpoint 
of the three segments corresponding to the three black 
vertices around $x$. For $\lambda$ close to $\lambda_0$ those segments will be almost degenerate with the interior point far away from $\psi(x)$ and thus $\psi(x)$ will not be of the form $v(b)$ which contradicts the results for generic $\lambda$.

  It is also clear from the explicit formula that $\phi^*$ is
  continuous as a function of $\Delta, \lambda$ and thus for fixed $x
  \in \cH^*$, $\psi(x)$ is continuous as a function of $\Delta,
  \lambda$. To finish the proof we need to show that we only need to
  know $\psi$ on a finite number of points to know $T$ in a given
  ball. This is immediate because we see from Proposition
  \ref{prop:prop_geometrique:face_blanche} that the scaling factor of
  adjacent faces cannot be both vanishingly small so there is a finite number of face in any ball.
\qed

\smallskip


\begin{corollary}\label{cor:compacite}
  $\cT^\bullet /d$ is locally compact. Let $\cT^\bullet_\Delta = \{ T_{\lambda \Delta} | \lambda \in \bbS^1\}= \cC(\bbS^1,\delta)$;  $\cT^\bullet_\Delta /d$ is compact.
\end{corollary}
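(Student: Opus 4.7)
The plan is to deduce both statements from Proposition \ref{prop:homeomorphisme} (continuity and surjectivity of $\cC$) combined with the fact that the parameter space $\cP=\bbS^1\times\{\text{area-one triangles}\}$ is itself locally compact, since the space of labeled triangles of area one is a finite-dimensional manifold and $\bbS^1$ is compact.

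The compactness of $\cT^\bullet_\Delta/d$ is the easy half: it is by definition $\cC(\bbS^1\times\{\Delta\})$, the continuous image of the compact set $\bbS^1$, hence compact.

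For local compactness of $\cT^\bullet/d$, I would argue as follows. Fix $[(T_0,v_0)]=\cC(\lambda_0,\Delta_0)$. For $r>0$ set
\[
K_r:=\cC^{-1}\bigl(\bar B_d([(T_0,v_0)],r)\bigr),
\]
which is closed in $\cP$ by continuity of $\cC$. By surjectivity of $\cC$, every element of the closed ball $\bar B_d([(T_0,v_0)],r)$ admits some preimage, and that preimage lies in $K_r$ by construction; hence $\cC(K_r)\supseteq \bar B_d([(T_0,v_0)],r)$. The goal is then to show that $\cC(K_r)$ is compact, for then the closed ball, being closed inside a compact set in the metric (hence Hausdorff) space $\cT^\bullet/d$, is itself compact and furnishes a compact neighborhood of $[(T_0,v_0)]$.

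The main obstacle, and the crux of the argument, is to show that $K_r$ is contained in a compact subset of $\cP$ when $r$ is small enough. The key observation is that the pseudo-distance $d$ measures local similarity on a ball of radius $1/r$ around the pointed vertex, while by Proposition \ref{prop:prop_geometrique}(\ref{prop:prop_geometrique:face_blanche}) every white face of $T_{\lambda\Delta}$ is similar to $\Delta$, and by Proposition \ref{prop:prop_geometrique}(\ref{prop:prop_geometrique:non_degenere}) at least some of those faces near the origin have a size bounded away from zero uniformly in $\lambda$. Consequently, if $\cC(\lambda,\Delta)$ is $d$-close to $[(T_0,v_0)]$, then the shape of a non-degenerate white triangle in $T_{\lambda\Delta}$ must nearly match one in $T_0$, forcing $\Delta$ to lie close to $\Delta_0$ in the space of triangles. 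Combined with the trivial fact that $\lambda\in\bbS^1$, this confines $K_r$ to $\bbS^1\times\bar U$ for some compact neighborhood $\bar U$ of $\Delta_0$; as a closed subset of a compact set, $K_r$ is then itself compact, and continuity of $\cC$ gives the compactness of $\cC(K_r)$, completing the proof.
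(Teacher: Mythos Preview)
The paper states the corollary without proof, treating it as immediate from Proposition~\ref{prop:homeomorphisme}; the subsequent remark identifies flat triangles as the only obstruction to global compactness of $\cP$, which is exactly the issue you isolate. Your overall strategy---pull back a small $d$-ball to $\cP$, show the preimage sits over a compact set of triangles, and push forward---is the intended one, made explicit.

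There is, however, a small circularity in your justification of the key step. You invoke Proposition~\ref{prop:prop_geometrique}(\ref{prop:prop_geometrique:non_degenere}) to produce a non-degenerate white face of $T_{\lambda\Delta}$ near the origin, but that item only bounds \emph{segment} lengths from below; any lower bound on face sizes for $T_{\lambda\Delta}$ would depend on the angles of $\Delta$, which is precisely the parameter you are trying to control. The clean fix is to argue from the $T_0$ side instead. The fixed graph $T_0$ does have a non-degenerate face $F_0$ near the pointed vertex: by the formula in Proposition~\ref{prop:prop_geometrique}(\ref{prop:prop_geometrique:face_blanche}) the three white faces around any vertex of $\cH^*$ have scaling factors $|\Re(\bar\lambda_0\bar f(w_i))|$ with the $f(w_i)$ separated on $\bbS^1$ by the fixed angles of $\Delta_0$, so not all three can be small. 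Now use that the faces of $T_{\lambda\Delta}$ tile the plane (Proposition~\ref{prop:prop_geometrique}(\ref{prop:prop_geometrique:non_recouvrement})): one of them contains the centre of $F_0$, and Hausdorff-$r$ closeness on the ball of radius $1/r$ forces its boundary to lie in the $r$-neighbourhood of $\partial F_0$. That face is therefore a non-degenerate triangle similar to $\Delta$ and close in shape to $F_0$, which is similar to $\Delta_0$; this pins $\Delta$ near $\Delta_0$ without any a priori assumption on $\Delta$.
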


\begin{remark}
  The fact that $\cT^\bullet /d $ is not compact comes from the very flat triangles. We can recover the compactness easily, for example by adding a condition on the perimeter being bounded (recall that we already fixed the area to be $1$).
\end{remark}

\subsection{Core of the proof}

In this section we give the proof of the central limit theorem through compactness and ergodicity arguments. Two key lemmas on absolute continuity of the invariant measure for the environment from the point of view of the particle will be left for the next sections.

\begin{notation}
Let $\Delta$ a triangle with angles that are not all rational
multiples of $\pi$. Let $\lambda$ such that no triangle has size $0$
in $T_{\lambda\Delta}$ (any generic $\lambda$ works). As we said
before, we could work with rational multiples of $\pi$ but our choice will make our statements about ergodicity simpler.

In this section we will only work with the set $\cT^\bullet /d$ so functions and measures will always be defined on this set.

Let $\bbU$ be the uniform measure on the unit circle and let $\bbP= \cC(\Delta, \bbU)$ be its image by the construction. It is clear from Proposition \ref{prop:translation2} and ergodicity of irrational rotations that $\bbP$ is invariant and ergodic for the group of translations $\{ \tau_{mn} |m,n \in \bbN\}$. $\bbP$ will replace the law of the environment in the proof of \cite{Sznitman2002}.
\end{notation}

\begin{notation}
We define the environment from the point of view of the particle by
$T^\bullet_t = (T, X_t)$ where $X_t$ is the random walk on $T$. We let
$W$ be its generator: $W$ is the operator on functions on $\cT^\bullet/d$ defined by $Wf(T,v) = \frac{f(T,v^+)}{\norm{v^+-v}} + \frac{f(T,v^-)}{\norm{v^--v}} -f(T,v)$. We also define $p_t$ the transition probabilities for the environment from the point of view of the particle, $p_t (T, v) f(T,v) = \bbE_{v}[f(T,X_t)]$, with the expectation with respect to the random walk on $T$ started at $v$.
\end{notation}

The main point of the proof will be to construct an invariant ergodic measure for $p_t$ and to show that it is absolutely continuous with respect to $\bbP$. This is done through approximation of the aperiodic graph $T_{\lambda\Delta}$ by periodic graphs.

\begin{notation}\label{not:def_Tn}
Let $\Delta_n$ be a sequence of triangles such that $\Delta_n \rightarrow \Delta$ and all the angles of the $\Delta_n$ are rational multiple of $\pi$. Let $\lambda_n \rightarrow \lambda$ such that none of the $T_{\Delta_n \lambda_n}$ has a face of size $0$. By construction the $T_{\Delta_n \lambda_n}$ are periodic graphs. Let $\bbP_n$ the uniform probability measure on $\{ (T_{\Delta_n \lambda_n}, v), v \in T_{\Delta_n \lambda_n} \}/d$ (which is finite by periodicity) and let $\bbQ_n$ be an invariant ergodic measure for the random walk on the same set. $\bbQ_n$ exists by general theorems on finite state Markov chains and $\bbQ_n$ is clearly $p_t$ invariant.
\end{notation}

\begin{lemma}\label{lemme:Q_est_borne_l2}
  Let $q_n = \frac{\td \bbQ_n}{\td \bbP_n}$, the $q_n$ are uniformly bounded in the $L^2( \bbP_n)$ norm. 
\end{lemma}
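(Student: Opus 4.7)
My plan is to reduce the statement to a finite-state Markov chain problem using periodicity, and then apply a Lawler--Sznitman-style Alexandrov--Bakelman--Pucci (ABP) argument to bound the density of the invariant measure.

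First I would set up the reduction. Since $T_{\Delta_n \lambda_n}$ is periodic, the subset of $\cT^\bullet/d$ consisting of pointings of this graph identifies (via Proposition \ref{prop:translation2}) with the finite vertex set $V_n$ of one fundamental domain. Then $\bbP_n$ is uniform on $V_n$, and $\bbQ_n$ is the unique invariant probability of the associated irreducible finite-state chain (irreducibility follows from Proposition \ref{prop:construction_chemin_oriente}). The density is $q_n = |V_n|\,\bbQ_n$, and the claim becomes the uniform bound $|V_n|\sum_{v\in V_n} \bbQ_n(v)^2 \leq C$.

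Next, I would establish this bound via a discrete ABP estimate for the balanced walk. The three ingredients are: (i) $X_t$ is a martingale on the universal cover, so each coordinate function is $W$-harmonic and a maximum principle is available; (ii) Proposition \ref{prop:ellipticite_uniforme} gives uniform ellipticity of the discrete-time walk; and (iii) the almost-linearity of $\psi$ implies that a Euclidean ball of radius $R$ in $T$ contains $\Theta(R^2)$ vertices, with constants depending only on $\Delta$. Together these should yield a discrete ABP inequality of the form $\norm{u}_\infty \leq C R \, \norm{Wu}_{L^2(B_R)}$ for $u$ supported in a ball of radius $R$, which by duality produces an $L^2$ bound on solutions of the adjoint equation $W^{\ast} q = 0$. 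Since $\bbQ_n$ is exactly such a solution, this gives $\norm{q_n}_{L^2(\bbP_n)} \leq C$ uniformly in $n$. One could equivalently phrase this in terms of heat-kernel bounds $p_t(v_0,v) \leq C/(1+t)$ up to the mixing time $\Theta(|V_n|)$, from which $\bbQ_n(v) \leq C/|V_n|$ and hence even $\norm{q_n}_\infty \leq C$ would follow.

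The principal obstacle is making the ABP constants independent of $n$. Two features complicate the picture compared to the classical $\bbZ^2$ setting: the continuous-time rates $1/\norm{v^\pm - v}$ are unbounded above (they blow up on the $\epsilon$-almost-degenerate segments of Proposition \ref{prop:presque_degenere}), and the graph has no exact translation symmetry. I would handle the first issue by running the entire ABP argument with the \emph{discrete}-time walk $(X_k)_{k \in \bbN}$ instead of the continuous-time one, using Proposition \ref{prop:ellipticite_uniforme} to ensure uniform ellipticity in every direction. The local ``trap'' structure around short segments, made quantitative by Proposition \ref{prop:presque_degenere}, then ensures that almost-degenerate regions consume only $O(1)$ discrete steps and do not spoil the volume-doubling and Poincar\'e-type estimates needed for the 2D ABP argument. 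The lack of exact translation symmetry is not problematic since almost-linearity of $\psi$ still gives the large-scale Euclidean geometry; all constants end up depending only on $\Delta$, as required.
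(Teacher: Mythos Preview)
Your approach is essentially the same as the paper's: both follow the Lawler--Sznitman route, proving a discrete ABP (upper-contact-set) estimate for the balanced discrete-time walk using Proposition~\ref{prop:ellipticite_uniforme} for uniform ellipticity, and then obtaining the $L^2$ bound on $q_n$ by duality. The paper splits the argument into a Dirichlet-boundary ABP bound on the fundamental domain (Lemma~\ref{lemme:borne_linf_dirichlet}) followed by a separate transfer to periodic boundary conditions via iterated exit times (Lemma~\ref{lemme:borne_linf_periodique}); you collapse these into one sentence, but otherwise the strategy and the handling of the almost-degenerate segments match.
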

The proof will be given in the next section \ref{sec:mesure_invariante}.

\begin{lemma}\label{lemme:def_Q}
  There exists a measure $\bbQ$ on $\cT^\bullet_\Delta$ which is $p_t$ invariant and absolutely continuous with respect to $\bbP$.
\end{lemma}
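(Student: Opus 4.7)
The plan is to construct $\bbQ$ as a weak subsequential limit of the periodic invariant measures $\bbQ_n$, viewed as probability measures on a common compact subset of $\cT^\bullet/d$, and then to deduce absolute continuity and $p_t$-invariance by passing to the limit --- the first using Lemma \ref{lemme:Q_est_borne_l2} combined with Cauchy--Schwarz and Riesz representation, and the second via a coupling argument for the walks on $T_{\Delta_n \lambda_n}$ and $T_{\Delta \lambda}$.

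First, by Corollary \ref{cor:compacite} together with a perimeter bound (as indicated in the remark following it), for $n$ large all the $\bbQ_n$ and $\bbP_n$ can be viewed as probability measures supported on a single compact neighborhood $\cK$ of $\cT^\bullet_\Delta/d$ inside $\cT^\bullet/d$. Prokhorov's theorem yields a subsequence along which $\bbQ_n \to \bbQ$ weakly. Choosing the approximating triangles $\Delta_n$ so that the orders of $\beta_n/\gamma_n$ and $\beta_n/\alpha_n$ as roots of unity both diverge, Proposition \ref{prop:translation2} identifies $\bbP_n$ with the uniform measure on an increasingly dense finite orbit on $\bbS^1$; Weyl equidistribution combined with continuity of $\cC$ (Proposition \ref{prop:homeomorphisme}) then gives $\bbP_n \to \bbP$ weakly. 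Since $T_{\Delta_n \lambda_n}$ converges pointwise to $T_{\Delta \lambda}$, the limit $\bbQ$ is automatically supported on $\cT^\bullet_\Delta/d$.

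For absolute continuity, apply Cauchy--Schwarz and Lemma \ref{lemme:Q_est_borne_l2}: for any continuous nonnegative bounded function $f$,
\[
  \int f \, d\bbQ_n \;=\; \int f q_n \, d\bbP_n \;\leq\; \|q_n\|_{L^2(\bbP_n)} \left(\int f^2 \, d\bbP_n\right)^{1/2} \;\leq\; C \left(\int f^2 \, d\bbP_n\right)^{1/2}.
\]
Letting $n \to \infty$ on both sides (using weak convergence of $\bbQ_n$ and $\bbP_n$ together with continuity and boundedness of $f$ and $f^2$) gives $\int f \, d\bbQ \leq C \|f\|_{L^2(\bbP)}$. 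By density of $C_b(\cK)$ in $L^2(\bbP)$, the linear form $f \mapsto \int f \, d\bbQ$ extends to a bounded linear form on $L^2(\bbP)$, and Riesz representation produces a density $q \in L^2(\bbP)$ with $d\bbQ = q \, d\bbP$; in particular $\bbQ \ll \bbP$.

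For $p_t$-invariance, the identity $\int p_{t,n} f \, d\bbQ_n = \int f \, d\bbQ_n$ (where $p_{t,n}$ is the semigroup of the walk on $T_{\Delta_n \lambda_n}$) will be passed to the limit. The right-hand side tends to $\int f \, d\bbQ$ by weak convergence. For the left, one needs $p_t f$ to be continuous on $\cT^\bullet_\Delta/d$ and $p_{t,n} f \to p_t f$ uniformly on $\cK$; both will follow from a coupling of the walks on two pointed graphs that agree on a large ball, using that jump rates depend only on the local segment structure. I expect this last point to be the main obstacle: the topology on $\cT^\bullet/d$ only captures geometric proximity on bounded balls, while a priori the walk could escape any fixed ball before time $t$. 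The control needed comes from the exponential tail bound on $X_t - X_0$ implied by Proposition \ref{prop:ellipticite_uniforme}: up to an arbitrarily small error the walk remains in a large ball up to time $t$, and on this event the coupling of the two walks is exact, yielding the desired uniform convergence.
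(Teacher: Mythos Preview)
Your proof is correct and follows the same overall strategy as the paper: compactness to extract a subsequential limit $\bbQ$ of the $\bbQ_n$, Cauchy--Schwarz with the uniform $L^2$ bound of Lemma~\ref{lemme:Q_est_borne_l2} to obtain $\bbQ\ll\bbP$, and passage to the limit in the invariance identity.

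The one place where you work harder than necessary is the $p_t$-invariance step. You introduce a separate semigroup $p_{t,n}$ for each approximating graph and then argue via coupling that $p_{t,n} f\to p_t f$ uniformly. But $p_t$ is already defined as a single operator on all of $\cT^\bullet/d$: for any pointed graph $(T,v)$, $p_t f(T,v)=\bbE_v[f(T,X_t)]$ with $X_t$ the walk on that particular $T$. Restricted to the (finite) support of $\bbQ_n$ this is exactly your $p_{t,n}$, so the identity you want to pass to the limit is simply $\int p_t f\,d\bbQ_n=\int f\,d\bbQ_n$. What remains is only to check that $p_t f$ is continuous on $\cT^\bullet/d$, and then weak convergence of $\bbQ_n$ finishes immediately. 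The paper dispatches this in one line (jump rates and bounded translations depend continuously on the pointed graph); your coupling argument with the exponential tail bound is precisely a proof of this continuity, so nothing is wrong --- you have just unpacked what the paper leaves implicit, at the cost of framing it as a convergence $p_{t,n}\to p_t$ that is in fact an identity.
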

\begin{proof}
Let $K$ be a uniform (in $n$) upper bound on the perimeter of the $\Delta_n$ and let $\cT^\bullet_K$ the set of pointed T-graphs with triangles of perimeter less than $K$. By corollary \ref{cor:compacite}, $\bbQ_n$ is a sequence of probability measures on the compact set $\cT^\bullet_K/d$ so, up to extraction, it converges towards a measure $\bbQ$. It is clear that $\bbQ$ is supported on $\cT^\bullet_\Delta$ because the parameters of the triangle are continuous functions of the graph.

Now we have to verify that $\bbQ$ is $p_t$ invariant for all $t$. $p_t$, the transition kernel of the environment from the point of view of the particle, is by definition an operator on measurable function of pointed $T$-graphs. Furthermore the jump rates of the random walk and the translations (by bounded amount) are continuous functions of the graph so $p_t$ maps continuous functions to continuous functions. Thus, for any continuous bounded function $f$, in the equality $ \E_{\bbQ_n} [ p_t f( X) ] = \E_{\bbQ_n} [ f(X) ]$ both sides go to the limit by convergence in law of $\bbQ_n$ to $\bbQ$ and:
\[
  \forall g : \cT^\bullet \rightarrow \bbR \text{ continuous bounded}, \quad \E_{\bbQ} [ p_t g(X) ] = \E_\bbQ [ g(X)]
\]
This equality means by definition that the measure $ \bbQ p_t$ and $\bbQ$ are identical on bounded continuous functions and this implies $\bbQ p_t= \bbQ$.

Finally we check that $\bbQ$ is absolutely continuous with respect to $\bbP$. It is easy to note that $\bbP_n$ converges to $\bbP$. Let $q_n = \frac{\td \bbQ_n}{\td \bbP_n}$ and let $g$ be a continuous bounded function, we have
\begin{align*}
  \abs{ \int g \td \bbQ } & = \lim \abs{ \int g \td \bbQ_n  } \text{  by convergence in law} \\
	  & = \lim \abs{ \int g q_n \td \bbP_n  } \\
	  &  \leq \limsup \left( \int \abs{g}^2 \td \bbP_n   \right)^{1/2} \left( \int \abs{q_n}^2 \td \bbP_n   \right)^{1/2}\\
	  & \leq C \norm{ g }_{L^2(\bbP)}
\end{align*}
and thus $\bbQ$ is absolutely continuous with respect to $\bbP$.
\end{proof}

\begin{lemma}\label{lemme:ergodicite_Q}
  $\bbQ$ is unique and $\bbP \sim \bbQ$. Furthermore the stationary measure on trajectories of the environment from the point of view of the particle is ergodic for the semi-group of time shifts.
\end{lemma}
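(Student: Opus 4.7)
The plan is to prove the three assertions in order: (a) $\bbP\sim\bbQ$, (b) uniqueness of $\bbQ$, (c) ergodicity of the stationary trajectory measure under the time shift. The key observation driving (a) and (b) is that, in the quotient $\cT^\bullet/d$, a single jump of the random walk coincides with a translation: if the walker at $(T,v(m',n'))$ moves to a neighbour $(T,v(m'',n''))$, the new state is by construction $\tau_{m''-m',\,n''-n'}$ applied to $(T,v(m',n'))$. Combined with ergodicity of $\bbP$ under the action $\{\tau_{m,n}\}_{m,n\in\bbZ}$ (immediate from Proposition \ref{prop:translation2} and irrationality of $\beta/\alpha$ and $\beta/\gamma$), this lets us promote walk-invariance to translation-invariance and then finish by the ergodic $0$--$1$ law. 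This is precisely where the \emph{ergodicity without a group action} mentioned in the introduction becomes crucial: the $\tau_{m,n}$ do not form a group, but they interact with the walk in exactly the required way.

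For (a), set $q=\td\bbQ/\td\bbP$ and $A=\{q>0\}$. From $\bbQ(A^c)=0$ and $p_t$-invariance of $\bbQ$ one has $\int p_t\ind_{A^c}\,\td\bbQ=0$, and since $p_t\ind_{A^c}\geq 0$ this forces $p_t\ind_{A^c}=0$ $\bbQ$-a.s.; that is, the walk starting from any $(T,v)\in A$ is in $A$ at every rational $t$, and therefore (by right-continuity of the pure-jump trajectory) at every $t\geq 0$. On the other hand, Propositions \ref{prop:ellipticite_uniforme} and \ref{prop:construction_chemin_oriente} imply that, starting from any vertex of a non-degenerate T-graph $T$, the walk reaches every other vertex of $T$ in finite time with positive probability (the graph is connected and every edge at the current position carries a positive jump rate). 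Consequently, if $(T,v)\in A$ then $(T,v')\in A$ for every vertex $v'$ of $T$; equivalently, $A$ is invariant under every $\tau_{m,n}$ in $\cT^\bullet/d$. Translation-ergodicity of $\bbP$ then gives $\bbP(A)\in\{0,1\}$, and the normalisation $\int q\,\td\bbP=1$ rules out the zero case, so $\bbP\ll\bbQ$.

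For (b), the same reasoning applied to an arbitrary $E$ with $p_t\ind_E=\ind_E$ $\bbP$-a.s.\ (so that both $E$ and $E^c$ are forward-invariant for the walk) shows that $E$ is translation-invariant, hence $\bbP$-trivial. Therefore $\bbQ$ is ergodic for $(p_t)_{t\geq 0}$, and consequently extremal in the convex set of $p_t$-invariant probability measures. Any other such measure that is absolutely continuous with respect to $\bbP\sim\bbQ$ is in particular absolutely continuous with respect to $\bbQ$ and must then equal $\bbQ$. For (c) we invoke the standard correspondence for stationary Markov processes: any time-shift invariant event on trajectory space corresponds, via the Markov property, to a $p_t$-invariant event of the initial state, which has measure $0$ or $1$ under the extremal $\bbQ$; this yields ergodicity of the stationary law of $(T^\bullet_t)_{t\geq 0}$ under time shifts, which is exactly what is needed to invoke Birkhoff's theorem in the sequel.

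The main obstacle is (a); (b) and (c) follow for free once the mechanism promoting walk-invariance to $\tau$-invariance has been set up. The distinctive feature of this setting, absent from the IID random-environment proofs of \cite{Lawler1982, Sznitman2002}, is that in the absence of genuine randomness one really has to exploit the rigid identity ``walk step $=$ translation in $\cT^\bullet/d$'' to extract any nontrivial ergodicity at all from the quasi-periodic environment.
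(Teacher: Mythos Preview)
Your overall structure matches the paper's: it is the Sznitman argument, promoting $p_t$-invariance of $\{q=0\}$ (or its complement) to translation-invariance and then invoking ergodicity of $\bbP$ under the $\tau_{m,n}$. Parts (b) and (c) are handled the same way in both.

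There is, however, a genuine gap in part (a). You write that Propositions \ref{prop:ellipticite_uniforme} and \ref{prop:construction_chemin_oriente} imply that from any vertex the walk reaches every other vertex with positive probability, justifying this with ``the graph is connected and every edge at the current position carries a positive jump rate''. But the walk is \emph{oriented}: from $v$ you can only jump to the two endpoints of the segment containing $v$ in its interior, not to arbitrary neighbours in the unoriented sense. What you need is that the oriented graph is strongly connected, i.e.\ that for any $v,v'$ there is an \emph{oriented} path from $v$ to $v'$. Proposition \ref{prop:construction_chemin_oriente} only produces oriented paths that drift monotonically in a prescribed direction; it says nothing about hitting a specified target vertex. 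This accessibility statement is exactly Lemma \ref{lemme:accessibilite} in the paper, and its proof is not a one-liner: it is a topological argument showing first that the set $T_v$ of vertices reachable from $v$ is simply connected (by an edge-counting argument), and then that its complement has no infinite component (using the directed paths of Proposition \ref{prop:construction_chemin_oriente} to separate the plane). Once you have Lemma \ref{lemme:accessibilite} in hand, your argument for (a) goes through verbatim and coincides with the paper's.
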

The proof is essentially identical to \cite{Sznitman2002} and will be given in section \ref{sec:ergodicite}.

\begin{theorem}\label{thm:CLT_hexagone}
  Let $X_t$ denote the continuous time random walk on $T_{\lambda\Delta}$ started on $b(0,0)$. For generic $\lambda$, there exist a positive definite symmetric matrix $M$ such that $X_{Nt}/\sqrt{N}$ converges in law to the two dimensional brownian motion of covariance $M$. Furthermore $M$ does not depend on $\lambda$ (it may depend on $\Delta$).
\end{theorem}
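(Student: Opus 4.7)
The strategy follows the Kozlov--Papanicolaou--Varadhan / Lawler / Sznitman scheme: use the invariant measure $\bbQ$ for the environment seen from the particle to apply the ergodic theorem to the quadratic variation of $X$, then conclude by the martingale functional CLT. Since $X_t$ is a martingale by construction, the only missing ingredient is identification of a deterministic asymptotic quadratic variation.

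The main steps are as follows. First I would introduce the infinitesimal covariance function $a : \cT^\bullet/d \to \mathcal{S}_2^+(\bbR)$, defined by
\[
a(T,v) \;=\; \frac{(v^+ - v)(v^+-v)^T}{\norm{v^+ - v}} + \frac{(v^--v)(v^--v)^T}{\norm{v^--v}},
\]
so that the predictable quadratic variation of $X$ equals $\int_0^t a(T^\bullet_s)\,ds$. Since $a$ is bounded (the lengths of segments are bounded above, and the jump rate cancels with one power of the jump length), $a \in L^1(\bbQ)$. By Lemma \ref{lemme:ergodicite_Q}, the law on trajectories of $T^\bullet_t$ started from $\bbQ$ is ergodic for the time-shift semigroup, so Birkhoff's ergodic theorem gives
\[
\frac{1}{t}\int_0^t a(T^\bullet_s)\,ds \;\xrightarrow[t\to\infty]{\text{a.s.}}\; M := \E_\bbQ[a],
\]
for $\bbQ$-almost every initial pointed graph. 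Because $\bbQ \sim \bbP$, the same convergence holds $\bbP$-almost surely, i.e. for a set of $\lambda \in \bbS^1$ of full Lebesgue measure, and for every vertex of $T_{\lambda\Delta}$: indeed by Proposition \ref{prop:translation2} the $\tau_{m,n}$-orbit of a pointed graph $(T_{\lambda\Delta},v(0,0))$ coincides, modulo the quotient by $d$, with the family of pointed graphs obtained by changing both the base point and $\lambda$, so the $\bbP$-null exceptional set is stable under translations and can be avoided by genericity of $\lambda$.

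Next I would apply the functional martingale CLT (e.g. Helland's theorem) to the rescaled martingale $X^{(N)}_t := X_{Nt}/\sqrt{N}$. Its predictable quadratic variation is $\frac{1}{N}\int_0^{Nt} a(T^\bullet_s)\,ds \to t\,M$ by the above ergodic convergence, and the Lindeberg condition is immediate from the exponential tails of the increments established in Proposition \ref{prop:ellipticite_uniforme} (the largest jump in $[0,Nt]$ is $O(\log N)$). This yields convergence in law of $X^{(N)}$ to a Brownian motion of covariance $M$. Positive definiteness of $M$ follows at once from Proposition \ref{prop:ellipticite_uniforme}: for every $\mathbf{n}\in\bbS^1$ we have $\mathbf{n}^T a(T,v)\mathbf{n} \geq \epsilon$ uniformly (the one-step variance is controlled by the infinitesimal one after integrating against the Poisson law of the number of jumps in $[0,1]$), hence $\mathbf{n}^T M \mathbf{n} \geq \epsilon > 0$.

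Finally I would argue that $M$ depends only on $\Delta$. The limit $M = \E_\bbQ[a]$ is defined via the measure $\bbQ$, which itself is built from $\Delta$ alone (Lemma \ref{lemme:def_Q}); the uniqueness asserted in Lemma \ref{lemme:ergodicite_Q} shows that $\bbQ$ does not depend on any choice made during the compactness extraction, hence $M$ is a function of $\Delta$ only. Combined with the previous paragraph, the quenched limit for every generic $\lambda$ and every starting vertex (in particular $b(0,0)$) is the Brownian motion of covariance $M$. The main technical obstacle is the passage from the $\bbQ$-a.s. statement (which concerns a random environment) to the genuinely quenched statement for a specific starting point in a specific $T_{\lambda\Delta}$; this is where the translation equivalence of Proposition \ref{prop:translation2} and the equivalence $\bbP \sim \bbQ$ combine crucially, and where the restriction to generic $\lambda$ comes from.
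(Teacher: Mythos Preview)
Your overall scheme---ergodic theorem for the environment seen from the particle, then martingale functional CLT---is exactly the paper's approach. The paper differs only in bookkeeping: it discretises to $(X_n)_{n\in\bbN}$, fixes a direction $\mathbf n$, defines $g(T,v)=\E_v[(X_1\cdot\mathbf n-X_0\cdot\mathbf n)^2]$, and applies Birkhoff to $\frac1N\sum_{k=1}^N g(T,X_k)$; you work in continuous time with the matrix-valued infinitesimal covariance $a$ and apply Birkhoff to $\frac1t\int_0^t a(T^\bullet_s)\,ds$. Both routes are fine and lead to the same $M=\E_\bbQ[\cdot]$.

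There is, however, one concrete error in your positive-definiteness step. You assert that $\mathbf n^T a(T,v)\,\mathbf n\geq\epsilon$ uniformly, but this is false: since $v^+-v$ and $v^--v$ are collinear (they lie on the same segment), $a(T,v)$ has rank one at every point, and $\mathbf n^T a(T,v)\,\mathbf n=0$ whenever $\mathbf n$ is orthogonal to that segment. This is precisely the lack of ``instantaneous'' ellipticity noted in the paper's remark preceding Definition~\ref{def:chemin_oriente}. The fix is easy and close to what your parenthetical hints at: by stationarity of $\bbQ$ under $p_t$,
\[
\mathbf n^T M\,\mathbf n \;=\; \E_\bbQ\!\left[\int_0^1 \mathbf n^T a(T^\bullet_s)\,\mathbf n\,ds\right] \;=\; \E_\bbQ\!\left[\,\E_v\bigl[(X_1\cdot\mathbf n-X_0\cdot\mathbf n)^2\bigr]\,\right]\;\geq\;\epsilon
\]
by Proposition~\ref{prop:ellipticite_uniforme}. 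The paper avoids the issue altogether by working with the time-one increment from the start.
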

  Remark that for any direction ${\bf n}$, $X_t.{\bf n}$ is a square integrable martingale. By the central limit theorem it is asymptotically gaussian so we only have to prove that the variance grows linearly to obtain our result. This is done by the ergodic theorem.

\begin{theorem}[Birkhoff ergodic theorem]
  Let $(\cT,\mu)$ a measured space and $F : \cT \rightarrow \cT$ a measure preserving transformation. We assume that $\mu$ is finite and $F$ invariant and ergodic, then for all $g \in L^1(\mu)$ and $\mu$ almost all $x$ we have :
\[ 
  \frac{1}{n} \sum_1^n g \circ F^k(x) \rightarrow \int g \td \mu.
\]
\end{theorem}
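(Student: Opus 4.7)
The plan is to prove the classical Birkhoff pointwise ergodic theorem via the maximal ergodic theorem. Normalizing so that $\mu(\cT)=1$ (which does not affect either side of the claim), set
\[
  S_n g = \sum_{k=0}^{n-1} g \circ F^k, \qquad A_n g = \frac{1}{n} S_n g,
\]
and put $\overline g = \limsup_n A_n g$, $\underline g = \liminf_n A_n g$. Since $A_{n+1} g(x) = \frac{n}{n+1} A_n g(F x) + \frac{g(x)}{n+1}$, passing to $\limsup$ and $\liminf$ gives $\overline g \circ F = \overline g$ and $\underline g \circ F = \underline g$. By ergodicity, both are $\mu$-a.e. constants $c^+$ and $c^-$; it remains to show $c^+ = c^- = \int g \, d\mu$.

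The workhorse is the maximal ergodic theorem: for every $h \in L^1(\mu)$, the set $E = \{\sup_{n\geq 1} S_n h > 0\}$ satisfies $\int_E h \, d\mu \geq 0$. To prove it, define $M_n = \max(0, S_1 h, \ldots, S_n h)$. Using $S_k h = h + S_{k-1} h \circ F$, for $1 \leq k \leq n$ one has $S_k h \leq h + M_n \circ F$, hence on $\{M_n > 0\}$ the inequality $M_n \leq h + M_n \circ F$ holds. Integrating, and using $F$-invariance of $\mu$ together with $M_n \circ F \geq 0$,
\[
  \int M_n \, d\mu = \int_{\{M_n>0\}} M_n \, d\mu \leq \int_{\{M_n>0\}} h \, d\mu + \int_{\{M_n>0\}} M_n \circ F\, d\mu \leq \int_{\{M_n>0\}} h\, d\mu + \int M_n \, d\mu,
\]
which gives $\int_{\{M_n>0\}} h\, d\mu \geq 0$. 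Since $\{M_n>0\}\uparrow E$ and $|h|\in L^1$, dominated convergence yields $\int_E h\, d\mu \geq 0$.

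Now fix $c > \int g\, d\mu$ and let $B_c = \{\overline g > c\}$. Because $\overline g$ is $F$-invariant, so is $B_c$, and $F$ restricts to a measure preserving transformation on $(B_c, \mu|_{B_c})$. For every $x \in B_c$ one has $A_n g(x) > c$ infinitely often, equivalently $S_n(g-c)(x) > 0$ infinitely often, so $B_c$ is contained in the set $E$ associated (on the restricted system) to $h = g - c$. The maximal theorem applied on $B_c$ yields $\int_{B_c} (g-c)\, d\mu \geq 0$, i.e.\ $\int_{B_c} g\, d\mu \geq c\, \mu(B_c)$. By ergodicity $\mu(B_c)\in\{0,1\}$, and $\mu(B_c)=1$ would force $\int g\, d\mu \geq c$, contradicting the choice of $c$. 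Hence $\mu(B_c)=0$ for every rational $c > \int g\, d\mu$, so $\overline g \leq \int g\, d\mu$ a.e. Applying the same argument to $-g$ gives $\underline g \geq \int g\, d\mu$ a.e., and together with $\underline g \leq \overline g$ we conclude $A_n g \to \int g\, d\mu$ $\mu$-a.e.

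The only genuinely delicate step is the combinatorial inequality $M_n \leq h + M_n \circ F$ on $\{M_n>0\}$ underlying the maximal ergodic theorem; everything else is a bootstrap from it. A minor subtlety is the role of ergodicity: without it the same argument identifies the limit with the conditional expectation $\E[g \mid \mathcal{I}]$ with respect to the $\sigma$-algebra of $F$-invariant sets, and it is precisely ergodicity that collapses this to the constant $\int g\, d\mu$.
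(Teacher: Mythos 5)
The paper states Birkhoff's ergodic theorem as a classical tool and gives no proof, so there is nothing to compare against; what you have written is the standard Garsia-style proof (maximal ergodic theorem via the inequality $M_n \leq h + M_n\circ F$ on $\{M_n>0\}$, then the ``$B_c = \{\overline g > c\}$'' bootstrap), and it is correct. Two small points worth tightening if this were to appear as a self-contained proof: (i) your parenthetical ``normalizing so that $\mu(\cT)=1$ does not affect either side'' is not literally true, since $\int g\,\mathrm{d}\mu$ scales with $\mu(\cT)$ --- the statement in the paper implicitly takes $\mu$ to be a probability measure, and your normalization is the right reading of it, but the justification should be that the claim is only sensible (and is clearly intended) in that normalization; (ii) the step ``passing to $\limsup$ gives $\overline g\circ F = \overline g$'' uses that $\limsup_n\bigl(\tfrac{n}{n+1}a_n + b_n\bigr) = \limsup_n a_n$ when $b_n\to 0$, which is true but deserves a word, especially when $\limsup a_n$ is infinite; alternatively one can get invariance of $\overline g$ directly from $A_n g(Fx) - A_n g(x) = \tfrac1n\bigl(g(F^n x)-g(x)\bigr)$ once $g(F^n x)/n\to 0$ a.e.\ is known, though that itself needs a Borel--Cantelli argument. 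Neither affects the correctness of the argument.
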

\begin{proof}[Proof of theorem \ref{thm:CLT_hexagone}]
  Fix ${\bf n} \in \bbS^1$ a direction and let us prove a one
  dimensional invariance principle for the random walk $X_t \cdot {\bf n}$. Since proposition \ref{prop:ellipticite_uniforme} was given with an unit time increment, we will work with the discrete time walk $(X_n)_{n \in \bbN}$. It is clear this is sufficient to get a result in the original continuous time model. Indeed the probability for the random walk to go far away from $X_n$ in the time interval $[n, n+1]$ is exponentially decreasing.

 Let $g(T,v)= \E_v[(X_1\cdot {\bf n}-X_0\cdot {\bf n})^2]$ with the expectation taken with respect to the random walk on $T$ started in $v$. By the Markov property, $g$ gives the conditional variance of any increment, more precisely
\[
  \forall k,\, \E[ (X_{k+1}\cdot {\bf n}-X_k\cdot {\bf n})^2| \cF_k] = g(T,X_k) .
\]

Consider now the set of infinite oriented paths of pointed T-graphs
(i.e. of environments viewed from the point of view of the
particle). On this set, put the measure obtained sampling the
environment (pointed T-graph) at time zero using $\bbQ$. The time shift is a measurable transformation on this set and by lemma \ref{lemme:ergodicite_Q} the measure is invariant and ergodic. The function $g$ extends trivially to a function on trajectory and is bounded so we can apply Birkhoff ergodic theorem to get
 \[
  \frac1N \sum_{k=1}^N g(T,X_k) =  \int g(T^\bullet) \td \bbQ(T^\bullet) + o(1)
 \]
where the equality holds for $\bbQ$ almost all graphs and almost all trajectories. Since $\bbQ \sim \bbP$, it is also valid for $\bbP$ almost all graphs.

The left hand side can be rewritten
\[
  \frac{1}{N} \sum_k \E[ (X_{k+1}\cdot {\bf n} - X_k\cdot {\bf n})^2| \cF_k];
\]
the right hand side is deterministic so by taking expectation on both sides we get, for $\bbP$ almost all graph,
\[
 \frac{1}{N} \E[(X_{N}\cdot {\bf n} - X_0\cdot {\bf n})^2] = \int g(T^\bullet) \td \bbQ(T^\bullet) + o(1).
\]
Remark that the limit is given by some fixed integral and does not depend on the starting point or $\lambda$.

Finally the invariance principle for martingales applies because $X_k
\cdot {\bf n}$ has $L^2$ increments and we just proved that its variance grows linearly so we have that $(\frac{X_{\lfloor Nt\rfloor}}{\sqrt N} \cdot {\bf n})_{t \geq 0}$ converges to a Brownian motion (with some unknown variance). Now this is true for any direction ${\bf n}$ so by definition $X_{\lfloor Nt\rfloor}/\sqrt{N}$ converges to a two dimensional brownian motion (again with an unspecified covariance matrix). As we said above, this is enough to conclude for the original continuous time process.
\end{proof}

\subsection{$L^2$ estimates of invariant measure}\label{sec:mesure_invariante}

In this section we prove lemma \ref{lemme:Q_est_borne_l2}. The proof
is very similar to the one in \cite{Sznitman2002} (with the notable
exception  that there they work with an underlying graph $\bbZ^2$) and is included here for the sake of completeness. This proof is slightly different from the one in \cite{Lawler1982} and it uses the approach of \cite{Kuo1990} (see Theorem 2.1 there).

We write the proof as a sequence of two lemmas. In the first one the structure of the graph appears so, since T-graphs are very different from $\bbZ^2$, we give a detailed proof. In the second one, on the other hand, the structure of the underlying does not appear so we only give a basic idea of the proof which is completely identical to \cite{Sznitman2002}.


%

\begin{notation}
Recall the notation \ref{not:def_Tn} and write $T^{(n)} = T_{\lambda_n \Delta n}$. The $T^{(n)}$ is a sequence of periodic non-degenerate $T$-graphs with parameters converging to some $(\lambda, \Delta)$ such that $T_{\lambda \Delta}$ is aperiodic and non degenerate. We assume here that the period of the $T^{(n)}$ are of order $n$ in both directions. We let $T^{(n)}_1$ denote the fundamental domain of $T^{(n)}$, seen as a finite graph embedded on the plane.
\end{notation}

\begin{lemma}\label{lemme:borne_linf_dirichlet}
   Let $X^x_t$ denote the random walk on $T^{(n)}$ started in $x$ and let $\nu$ be the time of the first exist of $T^{(n)}_1$, and for a function $f$ on $T^{(n)}_1$ let $Q f (x) = \E \sum_{0\leq k < \nu} f(X^x_k) $. We have
    \[
      \norm{ Qf }_\infty \leq C n^2 \norm{f}_{L^2(\bbP_n)}.
    \]
\end{lemma}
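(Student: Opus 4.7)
First I would observe that $u := Qf$ solves the discrete Poisson problem
\[
(I - P) u = f \text{ on } T_1^{(n)}, \qquad u = 0 \text{ outside } T_1^{(n)},
\]
where $P$ denotes the one-step kernel of the discrete-time walk on $T^{(n)}$; this is verified by a single Markov step, using that $Qf$ vanishes on the complement of $T_1^{(n)}$. Since the walk is a martingale, $I-P$ is a centred second-order difference operator whose local covariance is, by Proposition \ref{prop:ellipticite_uniforme}, uniformly non-degenerate in $x$ with constants depending only on $\Delta$.

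The plan is then to apply to $u$ the discrete Alexandrov--Bakelman--Pucci estimate of \cite{Kuo1990}. For a balanced uniformly elliptic discrete operator of this type on a planar domain of Euclidean diameter $R$, that estimate yields
\[
\norm{u}_\infty \le C \cdot R \cdot \Bigl(\sum_{x \in T_1^{(n)}} |f(x)|^2\Bigr)^{1/2},
\]
with $C$ depending only on the ellipticity constants. Now by Notation \ref{not:def_Tn}, $T_1^{(n)}$ is a fundamental domain whose periods are of order $n$ in both lattice directions, so its Euclidean diameter is $O(n)$ and $|T_1^{(n)}| = \Theta(n^2)$. Converting the counting $\ell^2$ norm into $\norm{f}_{L^2(\bbP_n)}$ for the uniform probability $\bbP_n$ brings out a factor $|T_1^{(n)}|^{1/2} = \Theta(n)$, so the bound becomes $\norm{Qf}_\infty \le C n^2 \norm{f}_{L^2(\bbP_n)}$, as required.

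The technical core, and the main obstacle, is running the ABP argument in the T-graph setting, since \cite{Kuo1990} is classically phrased on $\bbZ^d$. The adaptation rests on two observations: first, via the bijection $b \mapsto v(b)$, the vertices of $T^{(n)}$ are indexed by black vertices of $\cH$ and hence by $\bbZ^2$, which suffices for the combinatorial part of the proof; second, Kuo's convex-envelope argument really only requires that the increments be centred with uniformly non-degenerate conditional covariance, which is exactly what Proposition \ref{prop:ellipticite_uniforme} supplies. Concretely I would introduce the upper concave envelope $\Gamma$ of $u$ on a Euclidean ball containing the image of $T_1^{(n)}$, apply a discrete Monge--Amp\`ere-type identity to decompose the total variation of $\Gamma$ over the contact set $\{u = \Gamma\}$, bound each local ``discrete Hessian'' contribution by a multiple of $|f(x)|$ using the balanced Poisson equation at $x$, and close with Cauchy--Schwarz to produce the $\ell^2$ norm of $f$ on the right-hand side. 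The delicate point will be to set up the concave-envelope and contact-set decomposition with the irregular T-graph embedding in place of the regular $\bbZ^2$ embedding; once that is done, no ingredient beyond the uniform ellipticity of Proposition \ref{prop:ellipticite_uniforme} and the diameter/cardinality estimates on $T_1^{(n)}$ should be needed.
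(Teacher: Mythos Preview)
Your proposal is correct and is essentially the same argument the paper gives. The paper writes out the ABP/Kuo argument explicitly rather than citing it: it defines for each $x$ the set $s(x)=\{v\in\bbR^2: u(x')\le u(x)+v\cdot(x'-x)\ \forall x'\}$ (the subdifferential, i.e.\ your ``contact set'' contribution), shows that $S=\bigcup_x s(x)$ contains the ball of radius $\max u/D$, and then bounds $|s(x)|\le C f(x)^2$ using the balanced equation and uniform ellipticity. One small remark: the $\ell^2$ norm of $f$ appears not via Cauchy--Schwarz but directly, because in dimension $2$ the subdifferential $s(x)$ is trapped in a parallelogram of area $\asymp f(x)^2$; this is why the exponent $2$ in $L^2(\bbP_n)$ matches the dimension.
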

\begin{proof}
  We write $Q f=u$ and we drop the superscript $n$ to simplify notations. We let $\delta T_1$ denote the neighbours of $T_1$ (in the periodic graph). The first exit of $T_1$ is by definition the hitting time of $\delta T_1$. Remark that for all $x\in T_1 $, $\E[ u(X^x_1) - u(x) ] = -f(x)$ (we define by convention $u = 0$ on $\delta T_1$). 

  Let $s (x) = \{ {\bf v}  \in \bbR^2 | \forall x' \in T_1 \cup \delta T_1, u(x') \leq u(x) + {\bf v} \cdot (x'-x) \}$ and let $S = \cup_{x \in T_1} s(x)$. We start by giving a lower bound on the volume of $S$.

 Let $D$ be the diameter of $T_1 \cup \delta T_1$, let ${\bf v} \in \bbR^2$ such that $\abs{{\bf v}} < \max(u)/D$ and let $x_0 \in T_1$ be a point where $\max(u)$ is attained. By definition of the diameter, for all  $x\in T_1 \cup \delta T_1$,
\[
  u(x_0) + {\bf v}\cdot(x-x_0) > 0.
\]
Thus the function $x \rightarrow u(x_0) + {\bf v}\cdot(x-x_0) - u(x)$ is strictly positive on $\delta T_1$ (recall $u(x)=0$ on $\delta T_1$) while its minimum is negative or zero so it reach its minimum in a certain $x' \in T_1$. We see immediately that ${\bf v} \in s(x')$ and so ${\bf v} \in S$.
We just proved $\{ {\bf v} \in \bbR^2 \text{ s.t. } \abs{{\bf v}} < \max(u)/D \} \subset S$ so $S$ has a volume at least $\max(u)^2/D^2$. 

Now we will upper bound the volume of $S$ by giving an upper bound on the volume of each $s(x)$. Let $x \in T_1$, ${\bf v} \in s(x)$ and $x'$ such that $\bbP(X^x_1=x')=p > 0$. 
Since ${\bf v} \in s(x)$, the random variable $u(x) - u(X^x_1) + {\bf v}\cdot(X_1 -x)$ is positive and thus
\[
  \E[ u(x) - u(X^x_1) + {\bf v}\cdot(X_1 -x) ] \geq p\left(u(x) - u(x') + {\bf v}\cdot(x' -x)\right).
\]
The walk is balanced
$
  \E[ {\bf v}\cdot(X^x_1-x) ] =0 
$
and by definition $\E[u(X_1^x) - u(x)]=-f(x)$ so we can rewrite
\[
  {\bf v}\cdot(x' -x) \leq u(x')-u(x) + f(x)/p .
\]
We also have by applying directly the definition of $s(x)$ to $x'$ :
\[
  {\bf v}\cdot(x' -x) \geq u(x')-u(x).
\]
Finally, by uniform ellipticity we have a lower bound on $p$ so each $s(x)$ has volume at most $Cf^2(x)$. Since we already found a subset of volume $(\norm{u}_\infty/D)^2$ we get the inequality :
\[
  \norm{u}_\infty \leq C D( \sum f^2(x) )^{\frac12} \leq C' n^2 ( \tfrac{1}{\abs{T_1}} \sum f^2(x))^{\frac12}
\]
which proves the lemma.
\end{proof}

\begin{lemma}\cite{Sznitman2002}\label{lemme:borne_linf_periodique} 
  Let $X_t^x$ denote the random walk on $T^{(n)}$ started at $x$ and let $\tau$ be a geometric time of mean $n^2$ independent of the walk. We have for any function $f$ on $T_1^{(n)}$ (lifted as a periodic function on $T^{(n)}$):
\begin{equation*}
  \sup_{x \in T^{(n)}} \E[ f(X_\tau^x) ] \leq c n^2 \norm{f}_{L^2(\bbP_n)}.
\end{equation*}
\end{lemma}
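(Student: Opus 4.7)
This is a resolvent-type $L^\infty$ estimate, and it reduces cleanly to the Dirichlet bound of Lemma~\ref{lemme:borne_linf_dirichlet}. As the authors note, the T-graph structure enters only through that lemma, so the argument is essentially the one of \cite{Sznitman2002}. I would set $p=1/n^2$ and $u(x):=\E_x[f(X_\tau)]$; by splitting $f=f^+-f^-$ I may assume $f\ge 0$, so that $u\ge 0$. Since $\tau$ is geometric of parameter $p$ and independent of the walk, the memoryless property combined with the strong Markov property at the first exit time $\nu$ of the fundamental domain $T_1^{(n)}$ containing $x$ yields the decomposition
\begin{equation*}
u(x) \;=\; p\,\E_x\!\Bigl[\sum_{0\le k<\nu}(1-p)^k\, f(X_k)\Bigr] \;+\; \E_x\bigl[(1-p)^\nu\,u(X_\nu)\bigr].
\end{equation*}

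The first summand I would bound using $(1-p)^k\le 1$ and Lemma~\ref{lemme:borne_linf_dirichlet}: it is at most $p\,Qf(x)\le C p\, n^2\norm{f}_{L^2(\bbP_n)} = C\norm{f}_{L^2(\bbP_n)}$ uniformly in $x$. The second summand is at most $\norm{u}_\infty\cdot \E_x[(1-p)^\nu]$. Taking $\sup_x$ on both sides and rearranging gives $\norm{u}_\infty \le C\norm{f}_{L^2(\bbP_n)}/\delta$ whenever $\sup_x \E_x[(1-p)^\nu]\le 1-\delta$ for some $\delta>0$. Because $X_0=x \in T_1^{(n)}$, one has $\nu\ge 1$ deterministically, hence $\E_x[(1-p)^\nu]\le 1-p$, which supplies $\delta\ge p=1/n^2$. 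The bootstrap thus produces precisely the claimed bound $\norm{u}_\infty\le Cn^2\norm{f}_{L^2(\bbP_n)}$.

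The main obstacle---indeed the only one---is Lemma~\ref{lemme:borne_linf_dirichlet} itself, whose ABP-type proof relies on the geometry of the T-graph and the balanced, uniformly elliptic nature of the walk (Proposition~\ref{prop:ellipticite_uniforme}). Everything else above is a mechanical application of the resolvent identity and the strong Markov property, with the trivial inequality $\nu\ge 1$ supplying exactly the factor $1/p=n^2$ required to match the stated estimate. This is why the authors content themselves with a sketch and a reference to \cite{Sznitman2002}.
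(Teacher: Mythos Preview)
Your argument is correct and does prove the displayed inequality with the factor $n^2$, but it follows a different route from the paper's sketch, and the difference matters. You stop the walk at the exit time $\nu$ from (a translate of) the fundamental domain containing $x$ and feed in only the trivial lower bound $\nu\ge 1$; this gives a contraction factor $1-p=1-n^{-2}$ in the bootstrap, whence the $n^2$ on the right. The paper instead iterates the stopping time $\sigma=\inf\{t>0:\norm{X_t-X_0}\ge n\}$, i.e.\ the exit from a ball of radius $n$ \emph{centred at the current position}. Lemma~\ref{lemme:borne_linf_dirichlet} (whose proof only uses that the domain has diameter $O(n)$) still controls the contribution between two successive $\sigma$'s, so the ``Dirichlet'' piece is again at most $C\norm{f}_{L^2(\bbP_n)}$. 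The gain is in the contraction factor: since $X$ is a bounded-increment martingale, Doob's maximal inequality yields $\bbP_x(\sigma>cn^2)\ge c'>0$ uniformly in $x$ and $n$, hence $\sup_x\E_x[(1-p)^{\sigma}]\le 1-\delta$ with $\delta$ bounded away from $0$ independently of $n$. The bootstrap then closes with no power of $n$, giving $\sup_x\E_x[f(X_\tau)]\le C\norm{f}_{L^2(\bbP_n)}$.

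That sharper bound is what the application actually requires: fed into the duality step it gives $\norm{q_n}_{L^2(\bbP_n)}\le C$ uniformly in $n$, which is Lemma~\ref{lemme:Q_est_borne_l2}. Your estimate, taken at face value, would only produce $\norm{q_n}_{L^2(\bbP_n)}\le Cn^2$. So the $n^2$ in the statement is almost certainly a typo, and your shortcut through $\nu\ge 1$---while it proves the lemma as literally written---bypasses the one quantitative input (the quadratic lower bound on centred exit times, coming from the martingale structure) that makes the lemma do its job.
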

This lemma is about going from ``Dirichlet boundary conditions'' to ``periodic boundary conditions''. The main idea is to introduce iterates of the stopping time $\inf \{ t >0 \text{ s.t } \norm{X_t - X_0} \geq n \}$ and to use lemma \ref{lemme:borne_linf_dirichlet} between each time.

Finally for the proof of Lemma \ref{lemme:Q_est_borne_l2}, we first see that lemma \ref{lemme:borne_linf_periodique} implies the same kind of bound for the expectation with respect to $\bbQ_n$. Then by duality we get the bound we wanted on $\norm{\frac{d\bbQ_n}{d \bbP_n}}_{L^2(\bbP_n)}$.

\subsection{Ergodicity of $\bbQ$}\label{sec:ergodicite}

In \cite{Sznitman2002} it is proved, for a random walk in ergodic
random environment on $\bbZ^d$, that if there exist a invariant
measure $\bbQ$ for the environment seen by the particle, absolutely
continuous with respect to the law of the environment $\bbP$ , then:
\begin{itemize}
  \item $\bbQ \sim \bbP$
  \item $\bbQ$ is unique
  \item the stationary random walk with initial law $\bbQ$ is ergodic (for the time shifts semi-group).
\end{itemize}

The proof translates almost identically to our setting once we have lemma \ref{lemme:accessibilite} (which was trivial in the $\bbZ^d$ case). However we will still give the proof of the first point to emphasize where we need lemma \ref{lemme:accessibilite} and also why we do \emph{not} need the graph translations to form a group.

\begin{lemma}\label{lemme:accessibilite}
  Let $T$ denote a non-degenerate $T$-graph and let $v, v'$ be two of its vertices. There exists an oriented path going from $v$ to $v'$.
\end{lemma}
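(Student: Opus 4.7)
The plan is to combine Proposition \ref{prop:construction_chemin_oriente} with a planar intersection argument to show that there is a forward oriented path from $v$ to $v'$. First I will observe that the backward walk---which moves from a vertex to the interior vertex of one of the two segments having it as an endpoint---has exactly the same qualitative local structure as the forward walk: each vertex has out-degree two, the angles between consecutive segments are the angles of $\Delta$, and segments have uniformly bounded length. Consequently the proof of Proposition \ref{prop:construction_chemin_oriente} carries over verbatim to the backward walk, yielding for every vertex $v'$ and every direction ${\bf n}$ an infinite backward oriented path along which ${\bf n}\cdot x$ is monotone with linear growth every four steps.

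Next I would set ${\bf n} = (v'-v)/|v'-v|$, construct a forward path $\tilde P^+$ from $v$ in direction ${\bf n}$, and construct a backward path $\tilde Q^-$ ending at $v'$ and coming from infinity in direction $-{\bf n}$. Drawn in the plane as piecewise-linear curves consisting of the corresponding T-graph sub-segments, these are two infinite curves: $\tilde P^+$ leaves $v$ and extends toward $+\infty \cdot {\bf n}$, while $\tilde Q^-$ comes from $-\infty \cdot {\bf n}$ and ends at $v'$. By a planar topology argument---using the fact that the faces of $T$ tile the plane and segments do not intersect in their interiors (Proposition \ref{prop:prop_geometrique}(5),(6))---these two curves must share a common point, and by exploiting the combinatorial flexibility of the construction in Proposition \ref{prop:construction_chemin_oriente} (at each step there is typically a choice between two successors) one can arrange that the intersection occurs at a common vertex $z$ of $T$. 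Then the forward sub-path $v\leadsto z$ of $\tilde P^+$ concatenated with the forward reading $z\leadsto v'$ of $\tilde Q^-$ gives the desired oriented path.

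The hard part will be turning a planar (continuous) intersection of the two curves into a coincidence at a single \emph{vertex} of $T$. This requires both the uniform lower bound on segment length (Proposition \ref{prop:prop_geometrique}(3)), which ensures that vertices are not too sparsely spaced along the two curves, and the freedom left by Proposition \ref{prop:construction_chemin_oriente} in choosing how to continue each path locally. A careful planar argument, together with the triangular-face tiling structure and the non-overlap of triangles, should force the two curves to share not merely a crossing point but an actual common vertex, completing the proof.
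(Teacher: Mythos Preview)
Your proposal has a genuine gap, and it is not where you locate it. Turning a planar crossing into a common vertex is actually easy: every sub-segment of an oriented (forward or backward) path runs between the interior vertex of some segment of $T$ and one of its endpoints, and by Proposition~\ref{prop:prop_geometrique}(\ref{prop:prop_geometrique:non_intersection}) distinct segments meet only at vertices, while two sub-segments of the same segment automatically share its interior vertex. The real problem is that the two half-infinite curves you build need not meet at all. Both $\tilde P^+$ (from $v$ toward $+\infty\cdot{\bf n}$) and the forward reading of $\tilde Q^-$ (from $-\infty\cdot{\bf n}$ to $v'$) have ${\bf n}\cdot x$ monotone, but Proposition~\ref{prop:construction_chemin_oriente} gives no control on the transverse coordinate ${\bf n}^\perp\cdot x$: over the strip between ${\bf n}\cdot v$ and ${\bf n}\cdot v'$ the two curves can simply drift to opposite sides in ${\bf n}^\perp$ and never cross. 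A single half-infinite path does not separate the plane, so there is no Jordan-curve obstruction forcing an intersection. (There is also a smaller gap: the two backward options from a vertex lie along two \emph{distinct} segments meeting at an angle of $\Delta$, not collinearly along one segment as in the forward walk, so Proposition~\ref{prop:construction_chemin_oriente} does not transfer ``verbatim''.)

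The paper's argument is different and sidesteps both issues. It never introduces the backward walk. Instead it shows the forward-reachable set $T_v$ is all of $T$ by proving $T\setminus T_v$ has no components. Finite components are excluded by an in/out edge count: each vertex has exactly two successors and two predecessors, so a finite $G\subset T\setminus T_v$ with only outgoing boundary edges would be forced to have none, contradicting Proposition~\ref{prop:construction_chemin_oriente}. Infinite components are excluded by a separation argument that \emph{does} use a bi-infinite curve: from a point $x\in T_v$ near the putative infinite component one launches the two forward paths $\tilde P^\pm$ of Proposition~\ref{prop:construction_chemin_oriente} in directions $\pm{\bf n}^\perp$; their union lies in $T_v$, stays in a cone about the ${\bf n}^\perp$-axis, and separates far-away points of the component onto opposite sides, contradicting connectedness.
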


\begin{wrapfigure}{R}{6.5cm}
\begin{center}
  \includegraphics[width=5cm]{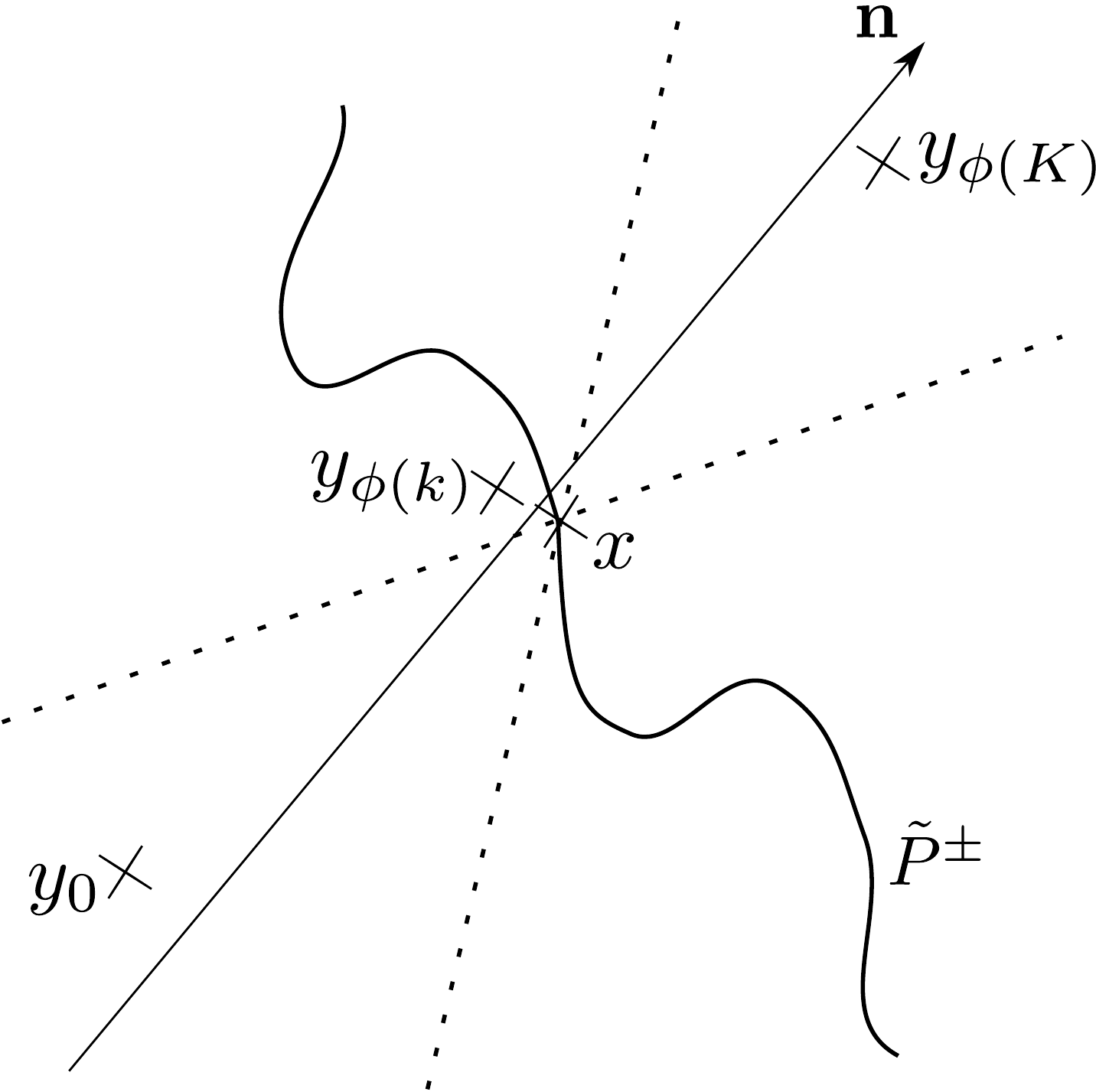}
\captionsetup{width=6cm}
\caption{An illustration of the proof that there is no infinite connected component in $T\setminus T_v$. Points $y_0$, $y_{\phi(k)}$ and $y_{\phi(K)}$ are in $T\setminus T_v$ and all in a direction close to ${\bf n}$. $x$ is a point in $T_v$ close to $y_{\phi(k)}$. The path $\tilde P^\pm$, which is known to stay inside the cone delimited by doted lines, separates $y_0$ from $y_{\phi(K)}$}
\label{fig:comp_connexe_inf}
\end{center}
\vspace{-0.5cm}
\end{wrapfigure} 

\noindent\emph{Proof.}
  Let $T_v$ be the set of points accessible by some oriented path starting in $v$. In this proof we emphasize that we will not only work with connections by oriented paths but also with connections by any non necessarily oriented path. We will use the term ``connected'' and associated definition of simple connectedness and connected component only for the latter, i.e. the usual definition when $T$ is seen as a non oriented graph.


  First we prove that $T_v$ is simply connected. Indeed if it is not
  the case let $G$ denote a finite connected component of its
  complement. Remark that any edge connecting $G$ to $T_v$ is oriented
  from $G$ to $T_v$. Let $y$ be a vertex of $G$. By the properties of
  $T$-graph, there exist exactly two vertices $y^-_1$ and $y^-_2$ that
  can be its predecessor in an oriented path and by definition of $G$
  both are in $G$. By going through all vertices of $G$ this way we
  count each edge with both ends in $G$ exactly once so we have
  $\abs{\{\text{edges of $G$}\} } = 2\abs{\{\text{vertices of $G$}\}
  }$. However we can also count edges of $G$ by looking at their
  starting point. We also have two edges going out of each vertex but
  some of them lead to vertices of $T_v$ so $2\abs{\{\text{vertices of
      $G$}\}} =\abs{\{\text{edges from $G$ to $T_v$}\}
  }+\abs{\{\text{edges of $G$}\} }$. Finally by proposition \ref{prop:construction_chemin_oriente} there are at least two edges going from $G$ to $T_v$ and we have found a 
contradiction.

  To conclude we have to show that there are no infinite connected components in the complement of $T_v$. Again by contradiction suppose there is one called $G$ and let $y_k$ be an infinite path in $G$ that stays at distance $O(1)$ of the boundary. By compactness we can extract a subsequence $y_{\phi(k)}$ such that $y_{\phi(k)}/\abs{y_{\phi(k)}}$ converges to a direction ${\bf n}$. Now remark that the paths $\tilde P^\pm$ constructed by proposition \ref{prop:construction_chemin_oriente} for the direction ${\bf n}^\bot$ have increments (every four steps) whose directions are bounded away from ${\bf n}$. In particular such a path lies completely in a cone of direction ${\bf n}^\bot$ and of angle $\pi-O(\epsilon)$, with $\epsilon > 0$ given in proposition \ref{prop:construction_chemin_oriente}. Now consider $k$ large enough and $x$ a point in $T_v$ close to $y_{\phi(k)}$ and define two paths $\tilde P^+$ and $\tilde P^-$ starting from $x$. All points of $\tilde P^+ \cup \tilde P^-$ are in $T_v$ and $\tilde P^+ \
\cup \tilde P^-$ separates the plane in two infinite connected components. By construction these connected components each include one of the connected component of the cone of direction ${\bf n}$ and angle $O(\epsilon)$. By taking $k$ large enough $y_0$ will be in one of them while $y_{\phi(K)}$ will be in the other for $K \geq k$ large enough. This is a contradiction with the fact all $y$ are in the same connected component $G$.
\qed

\begin{lemma}
  Let $\bbQ$ an invariant measure for the environment from the point of view of the particle. If $\bbQ \ll \bbP$, then $\bbQ \sim \bbP$.
\end{lemma}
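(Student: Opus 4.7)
The plan is to show that the Radon-Nikodym density $h:=\td\bbQ/\td\bbP$ is $\bbP$-a.s.\ strictly positive, equivalently that $A:=\{h>0\}$ satisfies $\bbP(A)=1$. Observe that $\bbQ(A)=1$ by definition of the density, and $\bbP(A)>0$, since otherwise $\bbQ(A)=\int_A h\,\td\bbP=0$ would contradict $\bbQ$ being a probability.

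The next step is to show that $A$ is essentially invariant under change of base point, i.e.\ that for $\bbP$-a.e.\ $(T,v)\in A$ every vertex $v'$ of $T$ also satisfies $(T,v')\in A$. Starting from $\bbQ p_t=\bbQ$ one has $\int p_t\mathbf{1}_{A^c}\,\td\bbQ=\bbQ(A^c)=0$, so $p_t\mathbf{1}_{A^c}\equiv 0$ $\bbQ$-a.s.\ and hence $\bbP$-a.s.\ on $A$ (using $\bbQ\ll\bbP$). Thus, for $\bbP$-typical $(T,v)\in A$, the walk $X$ started at $v$ on $T$ has $(T,X_t)\in A$ a.s.\ at each fixed $t$. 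Taking a countable intersection over rational $t$ and using that the walk is piecewise constant with finitely many jumps on any compact time interval (its jump rates are bounded because segment lengths are bounded below by Proposition~\ref{prop:prop_geometrique}), we conclude that almost every trajectory visits only vertices $v'$ with $(T,v')\in A$. Now Lemma~\ref{lemme:accessibilite} produces, for every vertex $v'$ of $T$, a finite oriented path from $v$ to $v'$; each step of such a path corresponds to one jump of the walk of strictly positive rate, so the walk follows the path to its end with positive probability, forcing $(T,v')\in A$.

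By Proposition~\ref{prop:translation2}, this base-point invariance translates into an invariance of the pullback $A':=\cC(\Delta,\cdot)^{-1}(A)\subset\bbS^1$ under multiplication by $(\beta/\gamma)^m(\beta/\alpha)^n$ for all $m,n\in\bbZ$. Since $\Delta$ has an angle that is not a rational multiple of $\pi$, at least one of $\beta/\gamma$, $\beta/\alpha$ has irrational argument, and the $\bbZ$-action it generates on $(\bbS^1,\bbU)$ is ergodic. Invariance of $A'$ then forces $\bbU(A')\in\{0,1\}$, and combined with $\bbP(A)=\bbU(A')>0$ this yields $\bbP(A)=1$, i.e.\ $\bbP\ll\bbQ$.

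The point to navigate carefully, and the main departure from the standard random-environment version of this argument, is that the walk moves only \emph{forward} along oriented paths and the translation structure on the vertex set is not a group. Lemma~\ref{lemme:accessibilite}, which restores a form of transitivity of the oriented-path relation, is precisely what bridges the gap between the dynamic invariance produced by $\bbQ p_t=\bbQ$ and the rotational invariance of $A'$ needed for the final ergodicity step.
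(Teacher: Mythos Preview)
Your proof is correct and follows essentially the same route as the paper's: work with the density of $\bbQ$ with respect to $\bbP$, use $\bbQ p_t=\bbQ$ to show that the support of the density is stable under the walk, invoke Lemma~\ref{lemme:accessibilite} to propagate this to all vertices, and conclude by ergodicity of $\bbP$ under the translations $\tau_{mn}$. The paper phrases the argument in terms of the zero set $E=\{f=0\}$ rather than its complement $A=\{h>0\}$, and obtains full translation invariance of $E$ via the one-line ``symmetry between $T$ and $T'$'' (which is again Lemma~\ref{lemme:accessibilite} applied in the other direction), but the substance is identical.

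One small inaccuracy: your parenthetical ``jump rates are bounded because segment lengths are bounded below'' is not quite right. Proposition~\ref{prop:prop_geometrique}(\ref{prop:prop_geometrique:non_degenere}) bounds the \emph{length} of segments from below, but the distance from the interior vertex to an endpoint can be arbitrarily small (almost-degenerate segments, cf.\ Definition~\ref{def:non_degenere} and Figure~\ref{fig:presque_degenere}), so rates are not uniformly bounded. This does not affect your argument: the walk is still a non-explosive jump process, so its range up to any finite time is contained in $\{X_s:s\in\bbQ_{\ge 0}\}$, and your countable-intersection step goes through. Alternatively, you can bypass the time-continuity issue entirely by noting that for any $v'$ reachable from $v$ by an oriented path, there is some fixed $t>0$ with $p_t\bigl((T,v)\to(T,v')\bigr)>0$, and a single application of $p_t\mathbf 1_{A^c}=0$ on $A$ then forces $(T,v')\in A$.
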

\begin{proof}
  We write $f = \frac{d \bbQ}{d \bbP}$ and we let $E = \{ f=0 \}$. Recall that $p_t$ denote the probability transition function of the environment from the point of view of the particle, by construction we have $ \bbQ p_t= \bbQ$


  In particular $\bbQ p_t  1_E = \bbQ 1_E = \int 1_{ \{f=0\} } f d \bbP =0$. However we also have $ \bbQ p_t 1_E = \int p_t 1_E f d \bbP$ so $p_t 1_E =0$ on $\{ f\neq0 \} =E^c$ and thus, since $p_t 1_E \leq 1$, we get for $\bbP$ almost all pointed graph $T$:
\[
   \forall t> 0\, , 1_E(T) \geq p_t 1_E (T) = \sum_{\text{$T'$ translate of $T$}}  p_t(T \rightarrow T') 1_E(T').
\]
This implies by lemma \ref{lemme:accessibilite}
\[
  \forall T' \text{ translate of } T, 1_E(T) \geq 1_E(T')
\]
and by symmetry between $T$ and $T'$, $E$ is invariant by translations (up to a negligible set).

Now remark that this implies that $E$ is invariant for the $\tau_{mn}$ which form a group for which $\bbP$ is ergodic so we have $\bbP(E)=0$ or $1$. Since $\int f d \bbP=1$, $\bbP( E) = 1 $ is impossible.
\end{proof}

\begin{remark}\label{rq:ergodicite}
The use of the ergodic theorem here is not as straightforward as it may seem. The set of translations of the plane that send one vertex to another does \emph{not} form a group for the composition. Even worse, we cannot see a translation of the plane as a function on pointed graphs. The functions $\tau_{mn}$ on the other hand are well defined on T-graphs but are not usual translations. Indeed for fixed pointed graph $T$, $\tau_{mn} T$ is a translate of $T$ but the translation vector depends on $T$. In the ergodicity argument we need well defined functions so we have to use the $\tau_{mn}$ but the only thing we really use is the idea of a translation invariant event which does not depend on the existence of a group on the set of translation.
\end{remark}

\section{Identification of the covariance}\label{sec:covariance}

In this section we show that the covariance in the above central limit theorem is proportional to the identity. We use an approach completely different from the one above. The main idea of the proof can be summarized in the following way. We know from the connection between $T$-graph and dimer model one specific discrete harmonic function on $T$ (see \cite{Kenyon2007}). However on large scale the random walk on $T$ is similar to a brownian motion with some limit covariance matrix $M$ so discrete harmonic functions should be almost continuous harmonic function for the Laplacian associated to $M$. To identify the covariance it it thus enough to find the only Laplacian for which our specific discrete harmonic function is almost continuous harmonic.

According to the previous sketch, the first step is the construction a specific discrete harmonic function. We will actually only construct a function harmonic except for a unit discontinuity along a line, similar to $\arg(z)$.

\subsection{Dimer model}\label{sec:dimers}

We give a few background informations about the hexagonal dimer model for the reader to be able to see where our harmonic function comes from.

\begin{definition}
  A \emph{dimer covering} or perfect matching of $\cH$ is a subset $D$ of edges of $\cH$ such that each vertex is in one and only one edge of $D$. Dimer coverings of $\cH$ can also be seen as lozenge tilings of the plane.
\end{definition}


\begin{theorem}\cite{SheffieldAsterisque2005No.304}
  For all $p_a,p_b,p_c$ in $(0,1)$ such that $p_a+p_b+p_c=1$, there exists a unique ergodic Gibbs
  measure $\mu$ on dimer coverings such that :
\begin{itemize}
  \item the conditional measure on any finite subgraph of $\cH$ is uniform;
  \item vertical (resp. NE-SW, NW-SE) edges appear with probability $p_a$ (resp. $p_b,p_c$).
\end{itemize}
\end{theorem}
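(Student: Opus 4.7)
The plan is to follow the now-standard strategy developed by Sheffield for ergodic Gibbs measures on dimer models: construct a candidate measure via finite-volume approximation, verify the DLR (Gibbs) property and the correct slope at the limit, and then prove uniqueness within the class of ergodic Gibbs measures with prescribed edge densities.

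First, I would construct a candidate measure by finite-volume approximation on tori. For each integer $N$, consider the uniform probability measure $\mu_N$ on the (finite) set of dimer coverings of the hexagonal torus $\cH/(N\Z)^2$, conditioned to have height function whose winding numbers around the two non-contractible loops correspond to the prescribed slope (equivalently, to the prescribed edge fractions $(p_a,p_b,p_c)$; recall that on $\cH$ the three edge densities are affine functions of the two-dimensional slope of the height function). The measures $\mu_N$ are translation invariant under $(\Z/N\Z)^2$. By compactness of probability measures on $\{0,1\}^{E(\cH)}$ for the product topology, extract a subsequential weak limit $\mu$; it is automatically translation invariant under the full lattice.

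Next, check the properties of $\mu$. The DLR property passes to the limit because the defining condition, that the conditional law on any finite region given the complement is uniform among admissible completions, is local and stable under weak limits. The expected edge densities pass to the limit by continuity (edge indicators are continuous local functions), so $\mu$ gives the correct $(p_a, p_b, p_c)$. Ergodicity is then obtained by decomposing $\mu$ into its ergodic components and invoking uniqueness (the next step): all components share the same slope, hence coincide.

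The main obstacle, and the real content of the theorem, is uniqueness of the translation-invariant ergodic Gibbs measure with a given slope. The standard approach is Sheffield's cluster-swapping argument: given two ergodic Gibbs measures $\mu_1$ and $\mu_2$ with the same slope, couple them on the same probability space and look at the superposition of the two matchings. Their symmetric difference is a disjoint union of double-edges and of cycles/bi-infinite paths; conditional on the geometry of this structure, a cluster can be flipped (exchanging its $\mu_1$-edges with its $\mu_2$-edges) without altering either marginal, by uniformity on finite windows. Finite clusters then contribute no obstruction. A careful argument, using that the height difference of the two configurations is a translation-invariant integer-valued field with zero mean (same slope) and bounded gradients, shows via ergodicity that this difference must be almost surely constant, hence zero, so the two configurations coincide. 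This forces $\mu_1 = \mu_2$ and completes the proof.
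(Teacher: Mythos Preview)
The paper does not prove this theorem at all: it is stated as background, with a citation to Sheffield's Ast\'erisque monograph, and no argument is given. So there is no ``paper's own proof'' to compare your proposal against.

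Your sketch is a reasonable outline of the standard route to this result (finite-volume/torus approximation for existence, passage to the limit of the DLR condition, and Sheffield's cluster-swapping for uniqueness at fixed slope). As a sketch it is fine, though the uniqueness step you summarize hides most of the work: the delicate part of cluster swapping is ruling out infinite clusters in the superposition (or showing they carry no height discrepancy), and your last paragraph compresses this into a single sentence. If you intend this as more than a pointer to the literature, that is where a referee would ask for detail. But for the purposes of this paper, a citation is all that is required, and that is exactly what the author does.
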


The distribution of dimers in these measures are given by determinantal process whose kernels are the inverses of the infinite matrix $K$ which was defined in Section \ref{sec:construction}.

\begin{theorem}\cite{Kenyon2006}
  Let $\mu$ be an ergodic Gibbs measure on dimer coverings of $\cH$. There exists an infinite matrix $K^{-1}$, indexed by white and black vertices of $\cH$ such that, for all sets of edges $(w_1b_1), \ldots ,(w_nb_n)$,
\[
  \mu( \forall i, w_ib_i \in D) = \prod_i K(w_i,b_i) \det\bigl(K^{-1}(b_k,w_l) \bigr)_{1 \leq k,l\leq n}
\]
\end{theorem}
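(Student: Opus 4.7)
The plan is to prove this result via the classical Kasteleyn/Kenyon strategy: first establish a determinantal formula on finite approximations, then pass to the thermodynamic limit using the periodic structure of $\cH$.

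First, I would work on finite toroidal exhaustions $\cH_N = \cH / (N\bbZ)^2$. On such a bipartite graph, Kasteleyn's theorem (adapted to the torus, which requires a sum over four spin structures, but for the bipartite hexagonal case reduces cleanly to $\det$-formulas in the limit) expresses the weighted partition function $Z_N = \sum_D \prod_{e \in D} K(e)$ as a determinant of the weighted biadjacency matrix, which is exactly the restriction $K_N$ of the matrix $K$ from Section \ref{sec:construction}. The key algebraic fact is then the standard identity: if one conditions on a fixed set of edges $(w_i b_i)_{i=1}^n$ being present, the remaining partition function is the determinant of $K_N$ with rows $w_i$ and columns $b_i$ deleted. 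Applying the Jacobi complementary-minor identity,
\[
\mu_N(\forall i,\ w_i b_i \in D) = \prod_i K(w_i,b_i) \cdot \det\bigl(K_N^{-1}(b_k,w_l)\bigr)_{1 \le k,l \le n}.
\]

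Second, I would construct the infinite inverse kernel $K^{-1}$ via Fourier analysis. Since $K$ is translation-invariant with respect to the fundamental domain $\cH_1$, it is diagonalised on $\cH_N$ by characters $(z,w) \in (\mu_N)^2$ where $\mu_N$ is the group of $N$-th roots of unity; each character yields a $2 \times 2$ block whose determinant is the characteristic polynomial $P(z,w)$ of the dimer model (the Newton polygon encodes the slopes). The Gibbs measure parametrised by $(p_a, p_b, p_c)$ corresponds to the unique ergodic Gibbs measure whose slope lies in the interior of the Newton polygon, and after gauge-changing the weights (i.e.\ multiplying $K$ by appropriate magnetic fields) one can arrange that the zero of $P$ lies at a prescribed point on the unit torus. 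The inverse kernel is then the Fourier integral
\[
K^{-1}(b,w) = \int_{\bbT^2} \frac{Q_{bw}(z,w)}{P(z,w)} \, \frac{dz}{2\pi i z} \, \frac{dw}{2\pi i w},
\]
with $Q_{bw}$ an explicit cofactor depending on the relative position of $b$ and $w$. One checks by residue calculation (or direct verification on $\cH_N$ and passing to the Riemann sum limit) that $K_N^{-1}(b,w) \to K^{-1}(b,w)$ as $N \to \infty$ for every fixed pair.

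Third, I would pass the determinantal formula to the limit. The event $\{w_i b_i \in D\ \forall i\}$ depends on finitely many edges, so by Sheffield's uniqueness of the ergodic Gibbs measure with prescribed slope, the finite-torus measures $\mu_N$ converge weakly on cylinder events to $\mu$. Continuity of the determinant in its entries combined with the pointwise convergence $K_N^{-1} \to K^{-1}$ then yields
\[
\mu(\forall i,\ w_i b_i \in D) = \prod_i K(w_i,b_i) \cdot \det\bigl(K^{-1}(b_k,w_l)\bigr)_{1 \le k,l \le n},
\]
which is the claim.

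The main obstacle is the thermodynamic limit: one must justify that $\mu_N$ converges to the prescribed ergodic Gibbs measure $\mu$ with the correct slope (this requires an appropriate choice of magnetic fields to tilt the finite-torus measure to the right slope, and an application of Sheffield's classification), and that the explicit Fourier integral really is the limit of $K_N^{-1}$ (which needs the spectral curve $P(z,w) = 0$ to intersect the unit torus transversally so that the singularity is integrable, a property guaranteed precisely in the interior of the Newton polygon, i.e.\ in the liquid phase containing our $(p_a,p_b,p_c)$). Once these two analytic inputs are in hand, the combinatorial identity is really the content of Kasteleyn's theorem applied at finite $N$.
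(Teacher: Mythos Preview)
The paper does not prove this theorem at all: it is stated with the citation \cite{Kenyon2006} and no proof environment follows. It is quoted as background from the dimer literature, and the paper immediately moves on to the asymptotic expansion of $K^{-1}$ (also cited from \cite{Kenyon2006}) which is the only input actually used in Section~\ref{sec:covariance}.

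Your sketch is a faithful outline of how the result is established in the original references (Kasteleyn's determinant formula on finite tori, Fourier diagonalisation of the periodic operator $K$, and passage to the limit via Sheffield's classification of ergodic Gibbs measures). There is nothing wrong with it as a roadmap, but be aware that you are reconstructing the content of \cite{Kenyon2006} rather than comparing against anything in the present paper. If your goal was to supply a self-contained argument where the paper gives none, that is fine; just note that the technical points you flag (tilting $\mu_N$ to the correct slope, integrability of the Fourier kernel near the spectral curve) are exactly the places where real work is needed, and a complete write-up would have to address them carefully rather than defer to Sheffield and Kenyon--Okounkov.
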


\begin{remark}
  The notation $K^{-1}$ for the kernel is justified because it is indeed an inverse of $K$, as can be seen from the compatibility condition around single vertices. $K$ being an infinite matrix there is no contradiction with it having many inverses. However only one of them is bounded, and for this inverse we have the following expression.
\end{remark}

\begin{proposition}\cite{Kenyon2006}
  The only bounded inverse of $K$ has the asymptotic expansion 
\[
  K^{-1} \bigl(b,w\bigr) = \frac{1}{2\pi}\Im \left( \frac{\bar f(b)  g(w)}{\ell (m(w),n(w)) - \ell (m(b),n(b))} \right) + O\left(\frac{1}{\norm{w-b}^2}\right)
\]
where $O(\norm{w-b}^{-2})$ has to be understood as
$\frac{h(b,w)}{\norm{w-b}^2}$ with $h$ bounded on $\bbZ^2$ and $\Im(z)$
denotes the imaginary part of $z$. Recall that $\ell$ is an explicit linear map, $\ell(m,n) = \frac{a \alpha}{2} m - \frac{c\gamma}{2} n$.
\end{proposition}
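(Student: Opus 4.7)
The plan is to exploit the translation invariance of the Kasteleyn operator $K$ to express $K^{-1}$ as a double integral on the torus $\bbT^2$, and then to extract the leading asymptotic at large $\|w-b\|$ by stationary phase near the zeros of the Kasteleyn symbol.

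Since $K$ commutes with the lattice translations by $e_1, e_2$, Fourier analysis on the fundamental domain $\cH_1$ diagonalizes it. Writing $(m,n)$ for the difference of coordinates between $w$ and $b$, and with suitable shifts $\delta_1, \delta_2$ encoding the two sublattices, one obtains a representation
\[
K^{-1}(b, w) = \frac{1}{(2\pi i)^2}\iint_{|z_1|=|z_2|=1} \frac{z_1^{m-\delta_1} z_2^{n-\delta_2}}{P(z_1,z_2)} \, \frac{dz_1\, dz_2}{z_1 z_2},
\]
where $P(z_1, z_2)$ is the characteristic Laurent polynomial of $K$, essentially of the form $a + b z_2 + c z_2/z_1$ up to the choice of fundamental domain and a choice of phases reflecting $\alpha, \beta, \gamma$. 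Because $\Delta$ has positive area, the strict triangle inequalities hold, and $\{P=0\} \cap \bbT^2$ consists of exactly two points, complex conjugate to each other: these are the Dirac points $(z_1^*, z_2^*)$ and $(\bar z_1^*, \bar z_2^*)$ of the honeycomb model. Uniqueness of the bounded inverse follows because any two bounded inverses differ by a bounded $K$-harmonic function; by Fourier decomposition such a function would have to be a distribution supported on the codimension-one set $\{P=0\} \cap \bbT^2$, which is incompatible with being bounded.

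The asymptotic is now extracted as follows. Perform the $z_2$ integral first by residues, picking up the simple zero $z_2 = z_2(z_1)$ of $P(z_1, \cdot)$ that lies inside the unit disc, to obtain a one-dimensional oscillatory integral along $|z_1| = 1$ with phase $m\log z_1 + n \log z_2(z_1)$ and a smooth amplitude. The phase has nondegenerate critical points exactly at the two Dirac points, and steepest-descent analysis produces a leading contribution of order $1/\|w-b\|$ with remainder of order $1/\|w-b\|^2$. Implicit differentiation of $P(z_1, z_2(z_1))=0$ turns the denominator of the leading term into $m\, \partial_{z_1}P + n\, \partial_{z_2}P$ evaluated at the Dirac point; a direct computation with $\alpha, \beta, \gamma$ identifies this with $\ell(m(w), n(w)) - \ell(m(b), n(b))$. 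The numerator is $(z_1^*)^{m-\delta_1}(z_2^*)^{n-\delta_2}$, which the defining recurrences for $f$ and $g$ identify, up to constants that must be tracked through the shifts $\delta_1, \delta_2$, with $\bar f(b) g(w)$. Summing the contributions of the two conjugate Dirac points produces the imaginary part and the overall factor $1/(2\pi)$.

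The main obstacle is bookkeeping: getting the phases right through the Fourier transform, matching the shifts $\delta_1, \delta_2$ with the definitions of $f$ and $g$, and proving uniformity of the $O(1/\|w-b\|^2)$ remainder over all directions of $(m,n)$. This last point is a standard consequence of stationary phase on a compact curve when the amplitude is smooth with derivatives bounded independently of the direction and the phase has only nondegenerate stationary points.
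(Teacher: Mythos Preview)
The paper does not give its own proof of this proposition; it is quoted from \cite{Kenyon2006} and used as a black box in Section~\ref{sec:covariance}. Your outline is essentially the standard derivation found in that reference: diagonalize the translation-invariant operator $K$ by Fourier transform, write $K^{-1}(b,w)$ as a double contour integral over the unit torus with the characteristic polynomial $P$ in the denominator, and extract the large-distance behaviour from the two conjugate simple zeros of $P$ on $\bbT^2$ (the liquid-phase condition being exactly the strict triangle inequalities on $a,b,c$, i.e.\ $\Delta$ nondegenerate).

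One minor slip: $\{P=0\}\cap\bbT^2$ is not a codimension-one set here but a pair of isolated points. This only strengthens your uniqueness argument, since a tempered distribution on $\bbT^2$ supported at finitely many points is a finite combination of derivatives of Dirac masses, whose inverse Fourier transform is a polynomial times an oscillation and cannot be bounded unless it vanishes. The rest of the sketch is sound; the ``bookkeeping'' you flag---matching the residue numerator to $\bar f(b)\,g(w)$ via the definitions of $f,g$ and the phase gradient at the Dirac point to $\ell(m,n)$---is precisely where the care lies and is carried out in the cited paper.
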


\subsection{Covariance}

In all this section we work with a fixed graph and we will omit the parameters $\lambda, \Delta$.

\begin{definition}
  A function $h$ on $T$ is discrete harmonic if and only if, for all $x \in T$,
\[ \E_x[h(X_1) ] = h(x) \]
\end{definition}

\begin{notation}
  We define $K_T(w,b) = \Re(\bar \lambda \bar f(b)) \lambda g(w) K(b, w)$. We let $K^{-1}$ denote the only bounded inverse of $K$ defined above. It is easy to see that $K_T$ is also invertible and that the matrix $K^{-1}_T(b,w) =\frac{1}{\Re(\bar \lambda \bar f(b)) \lambda g(w)} K^{-1}(b,w)$ is an inverse of $K_T$. 
\end{notation}

\begin{remark}
  We have $K_T(w,b) = \phi(wb)$ so we have only reinterpreted a flow on edges as a matrix.
\end{remark}


Our harmonic function will be the primitive of $K^{-1}_T$.
\begin{proposition}
  Let $w$ be a face of $T$ and let $d$ be a half line from the interior of $w$ to infinity that avoids all vertices of $T$. There exists an unique (up to a constant) function $G^*_{w d} : T \rightarrow \bbC$ such that:
\begin{itemize}
  \item $G^*_{w d}$ is continuous except for $-1$ discontinuity when crossing $d$ counterclockwise.
  \item $G^*_{w d}$ is linear on edges of $T$ (on edges where it is discontinuous it is linear plus an Heaviside function)
  \item for any segment with endpoints $x^+$ and $x^-$, $G^*_{w d}(x^+) - G^*_{w d}(x^-) = K^{-1}_T(b,w) \bigl( x^+ - x^- \bigr)$ (with a additional $+1$ on discontinuous edges)
\end{itemize}
\end{proposition}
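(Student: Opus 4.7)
The plan is to construct $G^*_{wd}$ by discrete integration of a $\bbC$-valued $1$-form $\omega$ on the directed edges of $T$, followed by an affine extension inside each edge. Concretely, for a directed edge $(x,x')$ of $T$ lying on the segment $S_b$ associated to a black face $b$ of $\cH^*$, I set $\omega(x,x'):=K^{-1}_T(b,w)(x'-x)$. Fixing an arbitrary basepoint $x_0\in T$ and $G^*_{wd}(x_0)=0$, I would define $G^*_{wd}(x)$ as the sum of $\omega$ along any edge path from $x_0$ to $x$, augmented by $+1$ for each counterclockwise crossing of $d$ along the path. Well-posedness reduces to showing that the cyclic integral of $\omega$ around every elementary cycle of $T$ vanishes, with the exception of cycles enclosing the designated face $w$, where it takes a fixed nonzero value that, after rescaling, will account exactly for the prescribed $-1$ jump across $d$.

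By Proposition \ref{prop:prop_geometrique}, the faces of $T$ are the triangular images of the white faces $w'$ of $\cH^*$, so closedness modulo the monodromy at $w$ reduces to computing the cyclic sum around each such triangle. If the vertices of $w'$ are $u_1,u_2,u_3$ in counterclockwise order, and the three adjacent black faces of $\cH^*$ are $b_{12},b_{23},b_{31}$, then the loop integral equals
\begin{equation*}
\sum_{\text{cyclic}} K^{-1}_T(b_{ij},w)\,(\psi(u_j)-\psi(u_i)).
\end{equation*}
Using $\psi(u_j)-\psi(u_i)=\phi^*(e_{ij})$ and the fact that $\phi^*$ on an edge of $\cH^*$ equals, up to a fixed sign coming from orientation conventions, $\phi=K_T$ on the primal edge of $\cH$ it crosses, the sum reduces, up to an overall $\pm 1$, to
\begin{equation*}
\sum_{b\sim w'}K^{-1}_T(b,w)\,K_T(w',b)=\delta_{w',w}
\end{equation*}
by the defining identity $K_TK^{-1}_T=\mathrm{Id}$, where on the left-hand side $w'$ is now identified with the white vertex of $\cH$ inside the face $w'$ of $\cH^*$.

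Therefore the loop integral of $\omega$ vanishes around every triangular face except the one containing $w$, where it equals a fixed nonzero constant $c$; rescaling $\omega$ by $-1/c$ yields monodromy $-1$ around $w$. Since $d$ is a half-line from the interior of $w$ to infinity that avoids every vertex of $T$, cutting along $d$ renders the complement simply connected relative to the triangulation $T$, so the integration procedure produces a well-defined, single-valued function with the prescribed $-1$ jump when $d$ is crossed counterclockwise. Inside each sub-segment of $S_b$, the values already assigned at the two endpoints differ by $K^{-1}_T(b,w)$ times the displacement, so extending linearly (plus a Heaviside on edges traversed by $d$) yields a function linear on every edge with the correct slope. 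Uniqueness up to an additive constant follows from the connectedness of $T$ established in Lemma \ref{lemme:accessibilite}. The main delicate point I anticipate is the bookkeeping: aligning the signs coming from the orientation conventions of $\phi^*$, from the $\pi/2$ rotation between primal and dual edges, and from the direction of traversal around the triangle, so that the constant $c$ is explicitly computed and the rescaling producing exactly the $-1$ jump is legitimate.
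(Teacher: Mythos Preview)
Your approach is essentially the paper's: reduce well-definedness to the cyclic sum around each triangular face $w'$, and identify that sum with $(K_TK^{-1}_T)(w',w)=\delta_{w',w}$ via $x_{i+1}-x_i=\phi^*(w',b_i)=K_T(w',b_i)$.

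There is, however, one genuine slip. You write that the loop integral around $w$ equals ``a fixed nonzero constant $c$'' and then propose to \emph{rescale $\omega$ by $-1/c$} to obtain monodromy $-1$. This is not permitted: the third bullet of the statement pins the slope of $G^*_{wd}$ on each segment to be exactly $K^{-1}_T(b,w)$, so any rescaling of $\omega$ destroys the very property you are constructing. The argument only closes because the constant is \emph{exactly} $+1$ --- this is precisely what $K_TK^{-1}_T=\mathrm{Id}$ gives once the orientation convention is fixed so that $\psi(u_j)-\psi(u_i)=K_T(w',b_{ij})$ for the counterclockwise traversal --- and this $+1$ is then cancelled by the single $-1$ jump across $d$. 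You flag the sign bookkeeping as ``delicate'' at the end, but it is not an optional refinement: it is the whole content of the consistency check, and the rescaling escape hatch you leave yourself is closed. Remove the rescaling sentence and instead carry the sign through explicitly; the rest is correct.
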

\begin{proof}
  It is clear that the properties define $G^*_{w d}$ completely, the only thing we have to check is that the definition is consistent. It is enough to check that the increments of $G^*_{w d}$ around any face sum to $0$.

  Given $w'$ a face of $T$, we write $x_1, x_2, x_3$ its vertices and $b_i$ the segment between $x_i$ and $x_{i+1}$ (with convention $x_4=x_1$), we have :
\begin{align*} 
  G^*_{w d}(x_{i+1}) - G^*_{w d}(x_i) & = K^{-1}_T( b_i,w ) (x_{i+1} - x_i) \\
	  & = K^{-1}_T( b_i,w) \phi^*( w',b_i ) \text{ by definition of $T$}\\
	  & = K^{-1}_T( b_i,w) K_T(w', b_i)
\end{align*}
on edges where $G^*_{w d}$ is continuous. On edges where $G^*_{wd}$ is discontinous the same holds with a $+1$. 

Finally $ K_T K^{-1}_T = Id$ so the above terms sum to $0$ on faces that are not $w$ since either all edges are continuous or there are exactly one $+1$ and one $-1$ discontinuity. Around the face $w$ there is a $-1$ discontinuity and the $K^{-1} K$ sum to $1$ so in the end $G^*_{w d}$ is well defined.
\end{proof}

\begin{remark}
  $G^*_{wd}$ is discrete harmonic except on edges where it is discontinuous.
\end{remark}

The asymptotic formula for $K^{-1}$ allows us to get an asymptotic expansion of $G^*_{w d}$ :
\begin{proposition}
We have, 
\[  
  G^*_{wd}(\psi(b))  = \frac{1}{2\pi} \Bigl( \arg_d(\psi(b)-w) + \frac{ \Im (\bar \lambda \bar f(w_0))}{\Re ( \bar \lambda \bar f(w_0))} \log \abs{\psi(b)- w} \Bigr) + C +O(1/(\psi(b)-w) )
\]
where $\arg_d$ denotes the determination of the argument with a $2\pi$ discontinuity on the half line $d$ and $C$ is a suitable constant.
\end{proposition}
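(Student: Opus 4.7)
The plan is to telescope the defining increments of $G^*_{wd}$ along an oriented path in $T$ from a fixed reference face to $\psi(b)$, substitute the asymptotic expansion of $K^{-1}$, and recognise the resulting sum as a Riemann-sum approximation of the closed complex $1$-form
$$\omega = \frac{1}{2\pi}\,\Im\!\left(\frac{\bar\lambda\,\bar f(w_0)}{\Re(\bar\lambda\,\bar f(w_0))}\,\frac{dz}{z-w_0}\right)$$
in the complex plane. To see that this matches the announced expression, set $A = \bar\lambda\,\bar f(w_0)/\Re(\bar\lambda\,\bar f(w_0)) = 1+i\tan\theta$ with $\tan\theta = \Im(\bar\lambda\,\bar f(w_0))/\Re(\bar\lambda\,\bar f(w_0))$, so that
$$\Im(A\log(z-w_0)) = \arg(z-w_0) + \tan\theta\,\log|z-w_0|,$$
which is exactly the bracketed quantity in the statement. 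The single-valued branch $\arg_d$ arises automatically from the unit jump prescribed for $G^*_{wd}$ across the ray $d$.

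Concretely, I would substitute $K^{-1}_T(b,w_0) = K^{-1}(b,w_0)/(\Re(\bar\lambda\,\bar f(b))\,\lambda g(w_0))$ together with the asymptotic for $K^{-1}$ into the elementary increment $K^{-1}_T(b,w_0)(x^+-x^-)$. By Proposition~\ref{prop:prop_geometrique} point (\ref{prop:prop_geometrique:face_noire}), the segment associated with $b$ has direction $\lambda g(b)$, so $x^+-x^- = \lambda g(b)\,s$ with $s\in\bbR$. Using the near-linearity $\psi(v) = \ell(m(v),n(v)) + O(1)$ to replace $\ell$ by $\psi$ in the denominator, the modulus-one phases involving $b$ cancel between the three factors and the elementary increment reduces, up to admissible errors, to the evaluation of $\omega$ on the oriented segment $x^+-x^-$. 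Summing along the oriented path produces, on one hand, the telescoping $G^*_{wd}(\psi(b)) - G^*_{wd}(\mathrm{ref})$ and, on the other, a Riemann sum for $\int \omega$; by the previous paragraph this integrates to the announced expression, modulo error terms and an additive constant absorbed into $C$.

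The main obstacle is to control the total error by $O(1/|\psi(b)-w_0|)$. Three sources contribute: the $O(\|b-w_0\|^{-2})$ remainder in the expansion of $K^{-1}$, whose sum along a path of length $\sim |\psi(b)-w_0|$ is bounded by the discrete analogue of $\int^R dr/r^2 = O(1/R)$; the bounded discrepancy $\psi-\ell$, which injects a relative error $O(1/|\psi(b)-w_0|)$ into the leading denominator; and the Riemann-sum vs.\ contour-integral error, which is second order in the uniformly bounded step-size and hence $O(1/|z-w_0|^2)$ per step. The delicate point is the near-field behaviour close to $w_0$, where $\omega$ is singular; this is handled by choosing the path to leave a fixed neighbourhood of $w_0$ in bounded time and absorbing the bounded contribution of that neighbourhood into the additive constant $C$.
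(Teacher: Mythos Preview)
Your approach is correct and is a cleaner repackaging of the paper's computation. The key observation—that $dz = s\,\lambda g(b)$ with $s$ real lets you move $s$ inside the imaginary part so that the $g(b)$ in the numerator of $K^{-1}(b,w_0)$ cancels against $\bar g(b)$—is exactly right and yields a $b$-independent closed $1$-form $\omega$. (Note a typo you inherit from the paper: in the formula for $K^{-1}_T$ the factors should read $\Re(\bar\lambda\bar f(w_0))\,\lambda g(b)$, since $f$ lives on white and $g$ on black vertices; with the correct indices the cancellation goes through as you describe.)

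The paper proceeds differently: it pulls everything back to $\cH^*$, integrates along coordinate paths (first the $m$-direction, then $n$), and expands $\Re(\bar\lambda\bar f(w_j))$ and $\Im(\bar f(w_0)g(b_j)/\ell)$ into exponentials. This produces four cross-terms; in two of them the product $\bar f(w_j)g(b_j)$ collapses to the constant edge-phase ($\alpha$, $\beta$ or $\gamma$), giving harmonic sums $\sum 1/j$ that produce the logarithm, while the other two terms oscillate and converge by the Dirichlet test. Your contour-integral viewpoint hides this decomposition: the ``non-oscillating'' part becomes $\int\omega$, and the ``oscillating'' part is absorbed into the Riemann-sum discrepancy caused by the oscillating step sizes $dz_j$. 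The paper's route is more explicit and makes the constants computable; yours is more conceptual and avoids the oscillation bookkeeping entirely.

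Two points to tighten. First, your claim that the sum of the $O(1/|z_j-w|^{2})$ errors along a path of length $\sim R$ is $O(1/R)$ is not accurate: that sum is $O(1)$, converging to a finite constant with an $O(1/R)$ tail. For the constant $C$ in the statement to be independent of the target $b$, you therefore need a consistent family of paths whose near-field portion (close to $w_0$) is fixed—the paper gets this for free by always integrating first along a coordinate axis, then showing the constants on different sides of the parallelogram agree. Second, there is no need to restrict to \emph{oriented} paths in the sense of Definition~\ref{def:chemin_oriente}: since $G^*_{wd}$ is linear along every segment, any nearest-neighbour path on $\cH^*$ (transported by $\psi$) telescopes just as well, and coordinate paths are much easier to control for the error analysis.
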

\begin{proof}
  The proof is a direct computation. We pull back $G^*_{wd}$ as a function on $\cH^*$ where we can explicitly integrate $K^{-1}_T$ and then we use the almost linearity of the mapping $\psi$ to go back to $T$. 

Before we start with the formulas, a word about the discontinuity of $G^*$. When we consider a linear path $P_{\cH^*}$ on $\cH^*$, it corresponds to a path $P_T = \psi(P_{\cH^*})$ in $T$ which is not linear and might cross the half line $d$ a number of time. However by almost linearity we see that $P_T$ can only make a finite number of loops around $w$. Thus, taking $P_{\cH^*}$ far enough from $0$ we can make sure $P_T$ does not make any loop around $w$. For such a path the discontinuity of $G_{wd}^*$ give exactly the same contribution as the discontinuity of $\arg_d$ so we can drop it from the computation.

 Fix $b$ a black vertex of coordinates $(m,0)$, we compute $G_{wd}^*(\psi(b)) - G_{wd}^*(\psi(b(0,0)))$. For simplicity, we assume, since $G^*_{wd}$ is defined up to a constant, that $G_{wd}^*(\psi(b(0,0)))=0$. We have, writing $b_j= b(j,0)$ and $w_j= w(j,0)$: 
\begin{align*}
  G_{wd}^*( \psi(b) ) & = \sum_{j=0}^{m-1} K_T( w_j,b_j)K_T^{-1}(b_j,w_0) \\
	  & = \sum_{j=0}^{m-1} K(w_j,b_j) \Re(\bar \lambda \bar f(w_j) ) \lambda g(b_j) \frac{1}{\Re( \bar \lambda f(w_0) )} \frac{1}{\lambda g(b_j)} K^{-1}( b_j, w_0) \\
	  & = \sum_{j=0}^{m-1}  K (w_j,b_j) \frac{\Re(\bar \lambda \bar f(w_j)) }{\Re(\bar \lambda \bar f(w_0))} \Im\left( \frac{ \bar f(w_0)  g(b_j) }{2\pi \ell(j,0) }\right) + \frac{h(j)}{j^2+1}
\end{align*}
Expanding the real and imaginary parts and replacing $ K (w_j,b_j)=a$
\begin{align*}
  G_{wd}^*( \psi(b) ) & = \frac{a}{ 8i\pi\Re(\bar \lambda \bar f(w_0)) } \sum_{j=0}^{m-1} ( \bar \lambda \bar f(w_j) + \lambda f(w_j) )( \frac{ \bar f(w_0) g(b_j)}{\ell(j,0)} - \frac{ f(w_0)\bar g(b_j)}{ \bar \ell(j,0)}   ) + \frac{h(j)}{j^2+1}\\
      & = \frac{a}{ 8i\pi\Re(\bar \lambda \bar f(w_0)) } \sum_{j=0}^{m-1} \Bigl(  \frac{\bar f(w_0) g(b_j) \bar \lambda \bar f(w_j) }{\ell(j,0)}  -\frac{f(w_0) \bar g(b_j) \lambda f(w_j) }{\bar \ell(j,0)}   \\
      & \quad \quad \quad +  \frac{\bar f(w_0) g(b_j) \lambda f(w_j) }{\ell(j,0)} - \frac{ f(w_0) \bar g(b_j) \bar \lambda \bar f(w_j) }{\bar \ell(j,0)}  \Bigr) +\frac{h(j)}{j^2+1}.
\end{align*}
Thanks to the definition of $f$ and $g$, the product $\bar f(w_j) g(b_j)$ does not depend on $j$ (it is actually $\alpha$, see remark \ref{rq:valeur_fg}) so the first two terms give harmonic sums. On the other hand the two last terms have an oscillating factor so they converge and the remainder of their sum is of order $1/m$. Finally the $O(1/j^2)$ terms also converge with a $1/m$ remainder. Overall we get, for black vertices of the form $b(m,0)$ :
\begin{align*}
  G_{wd}^*( \psi(b) ) &=  \frac{a}{ 8i\pi\Re(\bar \lambda \bar f(w_0)) } \sum_{j=0}^{m-1} 2i \Im \frac{ \alpha \bar \lambda \bar f(w_0)}{ \ell(j,0) } + C + O(1/m) \\
	& = \frac{1}{2\pi} \frac{ \Im (\bar \lambda \bar f(w_0))}{\Re ( \bar \lambda \bar f(w_0) )} \log(m) +C + O(1/m)
\end{align*}
where in the last line we replaced $\ell(j,0)= a \alpha j/2$ (see section \ref{sec:construction}).

We still have to check what happens in the $n$ direction in order to identify the bounded dependence on the argument. The most natural way to do this would be to compute $G^*_{wd}$ along a circle, however for technical reason we will compute it along a parallelogram.

We first compute $G_{wd}^*( \psi(b(m,n)) )- G^*_{wd}( \psi(b(m,0) )$ for $\abs{n}\leq m$. It is also equal to a sum of $n$ terms along a straight path but this time in the $y$ direction. The computations above are still valid except that we have to replace $w_j$ and $b_j$ by the black and white vertices of the edges crossed by a path in the $y$ directions. For these NW-SE edges (recall section \ref{sec:coordonnees} and remark \ref{rq:valeur_fg}), the coordinates of $w_j$ are $(m,j)$, the coordinates of $b_j$ are $(m-1,j+1)$, $K(w_j,b_j) = c$ and $\bar f(w_j) g(b_j) = \gamma$.
Going back to the expression of $G^*_{wd}$ we get
\begin{align*}
  G_{wd}^*&( \psi(b(m,n)) )-  G^*_{wd}( \psi(b(m,0) ) \\
	&= \frac{c}{ 8i\pi\Re(\bar \lambda \bar f(w_0)) } 
	     \sum_{j=0}^{n-1} \bigl( \bar \lambda \bar f(w_j) + \lambda f(w_j) \bigr)\bigl( \frac{\bar f(w_0) g(w_j)}{\ell(m-1,j+1)} - \frac{ f(w_0)  \bar g(b_j)}{ \bar \ell(m-1,j+1)}   \bigr) + \frac{h(j)}{m^2+j^2+1} \\
	&= \frac{1}{ 8i\pi\Re(\bar \lambda \bar f(w_0)) } \sum_j 2i \Im \bigl( \frac{c\gamma  \bar \lambda \bar f(w_0)}{ \ell(m-1,j+1) }\bigr) +  \sum_j 2i \Im \bigl( \frac{c \gamma \bar \lambda f(w_0) }{\ell(m-1,j+1)} (\frac{\beta}{\gamma})^{2m} (\frac{\beta}{\alpha})^{2j} \bigr) \\
	& \quad \quad \quad +\sum_j \frac{h(j)}{m^2+j^2+1} .
\end{align*}
The last sum is of order $O(1/m)$ because it contains at most $m$ terms of order $1/m^2$, the second one is an oscillating sum of terms of order $1/m$ and is thus also $O(1/m)$, so we have :
\[
  G_{wd}^*( \psi(b(m,n)) )-  G^*_{wd}( \psi(b(m,0) ) = \frac{1}{4 \pi \Re(\bar \lambda \bar f(w_0))} \Im\Bigl(  \sum_j \frac{ \bar \lambda \bar f(w_0) c \gamma}{ -\frac{a\alpha}{2}(m-1) + \frac{c \gamma}{2} (j+1) } \Bigr) + O(\frac1m).
\]
The sum is approximately (up to $O(1/m)$) the integral of $2/z$ between $\psi(b(m,0))$ and $\psi(b(m,n))$ so it gives $2\log(\abs{\psi (b(m,n))}) +2i \arg_d(\psi( b(m,n))) - 2\log(\abs{\psi (b(m,0)})) -2i \arg_d(\psi (b(m,0)))$. Finally we have
\begin{multline*}
  \Im\bigl( \bar \lambda\bar f(w_0)\sum_j \frac{c \gamma}{ -\frac{a\alpha}{2}m + \frac{c \gamma}{2} j } \bigr) \\
      = 2\Re( \bar \lambda\bar f(w_0) ) (\arg_d(b(m,n)) - \arg_d(b(m,0))) + 2\Im( \bar \lambda \bar f(w_0)) (\log(\abs{b(m,n)}) - \log(\abs{b(m,0)}))
\end{multline*}
and together with the previous estimate on $G^*_{wd}( \psi(b(m,0) )$ we find, for any point with $\abs{n} \leq m$, 
\[
  G^*_{wd}( \psi(b(m,n) ) = \frac{1}{2\pi} \Bigl( \arg_d(\psi(b)-w) + \frac{ \Im (\bar \lambda \bar f(w_0))}{\Re ( \bar \lambda \bar f(w_0))} \log \abs{\psi(b)- w} \Bigr) + C + O(1/(\psi(b)-w) )
\]
with a constant that does not depend on $(m,n)$.

We can obtain the value of $G^*_{wd}$ on the other sides of the parallelogram $\norm{(m',n')}_\infty = m$ using exactly the same computation.
 
\end{proof}

The above proposition is already almost a proof that the covariance in the central limit theorem is proportional to the identity. Indeed the only thing left to say is that the large scale behavior of $G^*_{wd}$ has to be harmonic for the Laplacian corresponding to the limit covariance. We turn this result into a precise statement now. This requires some cumbersome integral expression but it is really only straightforward calculus.

\begin{proposition}
  The covariance matrix in theorem \ref{thm:CLT_hexagone} is
  proportional to the identity.
\end{proposition}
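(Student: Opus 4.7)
The plan is to combine the sharp asymptotic formula for $G^*_{wd}$ from the previous proposition with the invariance principle of Theorem~\ref{thm:CLT_hexagone}. Set $\kappa := \frac{1}{2\pi}\,\Im(\bar\lambda\bar f(w_0))/\Re(\bar\lambda\bar f(w_0))$, so that
\[
G^*_{wd}(x) = \tfrac{1}{2\pi}\arg_d(x-w) + \kappa\log|x-w| + C + O(1/|x-w|).
\]
Since $G^*_{wd}$ is discrete harmonic away from the branch cut $d$, evaluating it along the random walk yields a genuine martingale on any trajectory staying away from $d$. After rescaling by $R$, this will force the limit function $h_\infty(y) := \tfrac{1}{2\pi}\arg_d(y) + \kappa\log|y|$ to be harmonic for the generator $L_M := \tfrac12 \sum_{i,j} M_{ij}\partial_i\partial_j$ of the limit Brownian motion, and an elementary computation then shows that $L_M h_\infty \equiv 0$ forces $M\propto I$.

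More precisely, fix a smooth bounded domain $U \subset \bbC$ whose closure is disjoint from $\{0\}\cup d$, and $y_0\in U$. Set $h_R(y):=G^*_{wd}(Ry) - \kappa\log R - C$; the asymptotic expansion gives $h_R\to h_\infty$ locally uniformly on $\bbC\setminus(\{0\}\cup d)$. Let $Y^R_t := X_{R^2 t}/R$ be the rescaled walk started near $y_0$, and $\tau^R_U$ its exit time from $U$; by Theorem~\ref{thm:CLT_hexagone}, $Y^R\Rightarrow B$ where $B$ is a Brownian motion of covariance $M$ started at $y_0$, and $M$ is non-degenerate. For $R$ large, the rescaled branch cut sits at distance $\gtrsim 1$ from $\bar U$, so $h_R(Y^R_{\cdot\wedge\tau^R_U})$ is a bona fide bounded discrete martingale, giving $h_R(y_0)=\bbE[h_R(Y^R_{\tau^R_U})]$. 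Since $\tau_U$ is almost surely a continuity point of the exit-value map for $B$ (smoothness of $\partial U$ and non-degeneracy of $M$), weak convergence together with uniform convergence $h_R\to h_\infty$ on a neighborhood of $\partial U$ passes to the limit:
\[
 h_\infty(y_0) = \bbE^M_{y_0}[h_\infty(B_{\tau_U})].
\]
Hence $h_\infty$ is $L_M$-harmonic on $U$, and varying $U$ it is $L_M$-harmonic on $\bbC\setminus(\{0\}\cup d)$.

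Writing $M = \begin{pmatrix} a & b \\ b & c \end{pmatrix}$, direct differentiation yields
\[
L_M\arg(z) = \frac{(a-c)\,xy + b(y^2-x^2)}{(x^2+y^2)^2}, \qquad L_M\log|z| = \frac{(c-a)(x^2-y^2) - 4bxy}{2(x^2+y^2)^2}.
\]
Imposing $L_M(\tfrac{1}{2\pi}\arg + \kappa\log|\cdot|) \equiv 0$ and comparing the coefficients of $xy$ and $x^2-y^2$ yields the linear system
$(a-c) - 4\pi\kappa b = 0$ and $b + \pi\kappa(a-c) = 0$,
whose determinant $1 + 4\pi^2\kappa^2 > 0$ forces $b = 0$ and $a = c$. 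Thus $M$ is a (necessarily positive) multiple of the identity.

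The main technical difficulty is justifying the passage to the limit in the optional-stopping identity, which requires uniform integrability of $h_R(Y^R_{\tau^R_U})$ together with continuity of the exit time $\tau_U$ as a functional of the path. Uniform integrability follows from the asymptotic expansion, which gives a bound on $h_R$ that is uniform on compact subsets of $\bbC\setminus(\{0\}\cup d)$; continuity of $\tau_U$ is a standard consequence of the smoothness of $\partial U$ and the non-degeneracy of $M$ (so that $B$ does not hug $\partial U$). Once these are in place, the proposition reduces to the elementary calculus above.
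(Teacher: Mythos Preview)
Your argument is correct, and it reaches the conclusion by a cleaner route than the paper's own proof.

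The paper proceeds differently: it diagonalizes $M$, places the singularity $w_n$ at distance $\sim Dn$ from the starting vertex along the direction of the larger eigenvalue, and runs the walk up to time $n^2$ (or exit from a ball of radius $nD/2$). Optional stopping applied to $G^*_{w_n d_n}$ then reduces, after expanding via the asymptotic formula and passing $n\to\infty$, to the vanishing of $c_n\,\bbE[\log|B_{\tau_\infty}/D - i|]$, where $c_n$ plays the role of your $2\pi\kappa$. The author then expands this Gaussian expectation in powers of $1/D$ and finds the leading term $\tfrac{1}{2D^2}(M_{11}-M_{22})$, obtaining a contradiction unless $M_{11}=M_{22}$. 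So the paper uses a double limit ($n\to\infty$, then $D\to\infty$) and an explicit integral computation, and it also needs to select $w_n$ so that $c_n$ stays bounded away from zero.

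Your approach fixes $w$, rescales the walk, and extracts directly that the limit profile $h_\infty=\tfrac{1}{2\pi}\arg+\kappa\log|\cdot|$ is $L_M$-harmonic on the punctured slit plane; the two-by-two linear system $\{(a-c)-4\pi\kappa b=0,\;b+\pi\kappa(a-c)=0\}$ then forces $a=c$, $b=0$ for every real $\kappa$, so no special choice of $w$ is required. This is more transparent conceptually and avoids the Gaussian integral. The technical price is essentially the same: you must justify convergence of the stopped walk to the stopped Brownian motion and keep the discrete branch cut outside the rescaled domain, both of which you handle correctly (note that the rescaled cut $d/R$ converges to a half-line from $0$, so choosing $\bar U$ disjoint from that limiting ray indeed ensures $RU\cap d=\emptyset$ for large $R$).
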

\begin{proof}
  Fix $v$ a vertex of $T$. To simplify notations we will assume that $v$ has coordinates $(0,0)$ in the plane where lies $T$ and is on the segment of coordinates $(0,0)$. 

  We can assume by rotating the axes that $M$ is diagonal with coefficients $M_{11} \leq M_{22}$. Fix $\epsilon > 0$ and $D$ large enough. Let $w_n$ be a sequence of faces of $T$ with $w_n-v \sim D n {\bf e_y}$. Let $d_n$ be a sequence of almost vertical half lines from $w_n$ going up and that avoid all vertices. To simplify notations, let $G^*_n$ denote $G^*_{w_n d_n}$.

Let $\tau_n$ be the minimum between $n^2$ and the first exit of $X_t$ from the ball of radius $nD/2$ and of center $(0,0)=X_0$. Let $B_t$ denote the brownian motion of covariance $M$ and let $\tau_\infty$ be the minimum between $1$ and the exit time of $B_t$ from the ball of radius $D/2$. Remark that $B_t$ is almost surely a continuity point of $\tau_\infty$, seen as a function of the trajectory $(B_t)_{t\in [0,1]}$ so $X_{\tau_n}/n$ converges in distribution to $B_{\tau_\infty}$. 
 Note also that the probability that $\tau_n \neq n^2$ is of order $e^{-D^2/8}$.

By discrete harmonicity, we have $\E_v( G^*_n(X_{\tau_n})) = G^*_n(v)$.
 On the other hand using the asymptotic formula, where we write $c_n$ for $\frac{ \Im \bar \lambda \bar f(w_n)}{\Re \bar \lambda \bar f(w_n)}$ :
\begin{align*}
  \E_v( G_n(X_{\tau_n})) & = \E_v\Biggl[ \frac{1}{2\pi} \Bigl( \arg_{d_n}(X_{\tau_n}-w_n) + c_n \log \abs{X_{\tau_n}- w_n} \Bigr) + \frac{h(X_{\tau_n})}{ \abs{X_{\tau_n}-w } } \Biggr] \\
	  &=\frac{c_n}{2\pi} \log(Dn) +  \frac{1}{2\pi} \E\Bigl[ \arg_{d_n}(\frac{X_{\tau_n}-w_n}{n}) \Bigr] +\frac{c_n}{2\pi} \E \Bigl[ \log \abs{ \frac{X_{\tau_n}- w_n }{Dn} } \Bigr] + O(\frac{1}{nD}) \\
	    & =G^*_n(v)  +\frac{c_n}{2\pi} \E \Bigl[ \log \abs{ \frac{X_{\tau_n}- w_n }{Dn} } \Bigr]  + o(1) + O(e^{-D^2/8}).
\end{align*}
In the last line, we first replaced $\tau_n$ by $n^2$ which gives an error $O(e^{-D^2/8})$ then we used the central limit theorem to replace the first expectation by $1/2+o(1)$ (remark that with our choice $\arg_{d_n}(v)=\pi$) and finally we used the asymptotic formula $G^*_n(v)  =\frac{c_n}{2\pi} \log(Dn) + 1/2 + O(\frac{1}{nD})$. To finish the proof we just have to prove that $\frac{c_n}{2\pi} \E \Bigl[ \log \abs{ \frac{X_{\tau_n}- w_n }{Dn} } \Bigr]$ does not vanish with $n$ and is bigger than $O(e^{-D^2/8})$. 

We can choose $w_n$ such that $c_n$ converges to a non zero value. For the expectation, the central limit theorem gives the limit 
\begin{align*}
  \E \Bigl[ \log \abs{ \frac{X_{\tau_n}- w_n }{Dn} } \Bigr] & \rightarrow \E[ \log \abs{\frac{B_{\tau_\infty}}{D} - i}  ].
\end{align*}
In the limit of large $D$, the integral on the right hand side becomes
\begin{align*}
  \E[ \log \abs{\frac{B_{\tau_\infty}}{D} - i}  ] & = \int \log \abs{\frac xD - i} \td N(0, C) + O(e^{-D^2/8})\\
      &= \int \tfrac{1}{2}\log\Bigl( M_{11}\frac{ x^2}{D^2} + (1-\sqrt{M_{22}}\frac{y}{D})^2 \Bigr)\frac{e^{-\frac{x^2}{2}} e^{-\frac{y^2}{2}}}{2\pi} \td x\td y + O(e^{-D^2/8})\\
      & =\tfrac{1}{2} \int \log\Bigl( 1 - 2\sqrt{M_{22}}\frac yD + M_{11}\frac{x^2}{D^2} +M_{22}\frac{y^2}{D^2}\Bigr) \frac{e^{-\frac{x^2}{2}} e^{-\frac{y^2}{2}}}{2\pi}\td x \td y+ O(e^{-D^2/8})\\
      & =\tfrac{1}{2} \int \left(- 2\sqrt{M_{22}}\frac yD + M_{11}\frac{x^2}{D^2} +M_{22}\frac{y^2}{D^2} - \tfrac{1}{2}(\sqrt{M_{22}}\frac{2y}{D})^2 \right)\frac{e^{-\frac{x^2}{2}} e^{-\frac{y^2}{2}}}{2\pi}\td x \td y + O(\frac{1}{D^3})\\
      & = \tfrac{1}{2D^2}( M_{11} - M_{22} )\int y^2 \frac{e^{ -\frac{y^2}{2} }}{\sqrt{2\pi}} \td x + O(\frac{1}{D^3}).
\end{align*}
The expansion of $\log$ is legal in the fourth line by dominated convergence. In the last line we just remark that we can separate the integrals over $x$ and $y$ and that both give the same term. For $M_{11} \neq M_{22}$ the integral is of order $1/D^2$ and we have the contradiction we were looking for.
\end{proof}

\section*{Acknowledgments}
I gratefully acknowledge the help and support of my advisor Fabio Lucio Toninelli. His contribution was invaluable at all stages of this work. I also thank Christophe Sabot for introducing me to the book \cite{Sznitman2002}.

\bibliographystyle{alpha}

\bibliography{biblioTCL_Tgraph}

\end{document}